\documentclass[11pt,reqno]{amsart}
\usepackage[T1]{fontenc}
\usepackage{lmodern}
\usepackage[a4paper,margin=32mm]{geometry}
\usepackage{amsmath,amssymb,amsthm,mathtools,bm,mathrsfs}
\usepackage{microtype}
\usepackage{xcolor}
\usepackage{enumitem}
\usepackage[colorlinks=true,linkcolor=blue!55!black,citecolor=blue!55!black,urlcolor=blue!55!black]{hyperref}
\usepackage[nameinlink,noabbrev]{cleveref}
\numberwithin{equation}{section}
\newtheorem{theorem}{Theorem}[section]
\newtheorem{proposition}[theorem]{Proposition}
\newtheorem{lemma}[theorem]{Lemma}
\newtheorem{corollary}[theorem]{Corollary}
\theoremstyle{definition}

\theoremstyle{remark}
\newtheorem{remark}[theorem]{Remark}

\newcommand{\kai}{\boldsymbol{1}}
\newcommand{\R}{\mathbb R}
\newcommand{\Sn}{\mathbb S^{n-1}}

\newcommand{\abs}[1]{\lvert #1\rvert}
\newcommand{\Per}{\operatorname{Per}}
\newcommand{\diam}{\operatorname{diam}}

\newcommand{\Sph}{\mathbb S^{n-1}}
\newcommand{\cH}{\mathcal H}
\newcommand{\cJ}{\mathcal J}
\newcommand{\cM}{\mathcal M}

\newcommand{\eps}{\varepsilon}

\allowdisplaybreaks

\title[A Brunn--Minkowski theory for random pair distances]
{A Brunn--Minkowski Theory for the Distribution of Random Pairs of Points in Convex Bodies}
       \author[S. Wang]{Sinan Wang}
\address{Institute of Mathematics,
		Hunan University,  Changsha,  410082,  China}
	\email{wangsinan@hnu.edu.cn}
\date{\today}
\subjclass[2020]{52A40, 52A38, 49Q20, 35J96}
\keywords{geometric probability, random pair distances, integral geometry, Minkowski problem, Wulff variation, Brunn--Minkowski inequality, Monge--Amp\`ere equation}

\begin{document}
\begin{abstract}
Let $K\subset\R^n$ be a convex body, let $X$ and $Y$ be independent uniform points of $K$, and set $R_K=(|K|/\omega_n)^{1/n}$.  We consider the distribution function
\[
 D_K(\rho)=\mathbb P\{\abs{X-Y}/R_K<\rho\},
\]
and its homogeneous counterpart
\[
 \mathcal J_\rho(K)=\iint_{K\times K}\kai_{\{\abs{x-y}<\rho R_K\}}\,dx\,dy.
\]
Thus $D_K(\rho)$ is the proportion of ordered pairs of points of $K$ whose distance is less than $\rho R_K$, or equivalently the radial distribution function of the normalized covariogram of $K$.

The functional satisfies a Brunn--Minkowski inequality, with equality precisely for homothetic bodies, and its differential gives a first Minkowski inequality.  We characterize the Borel measures that occur as Wulff differentials: they are exactly the nonzero finite positive measures with zero centroid which are not concentrated on a great subsphere, and the corresponding body is unique up to translation.  We further prove fixed-volume and stationary rigidity of the ball in the subcritical range, together with regularity for the resulting equation.  At saturation, the equation is the classical Minkowski equation.
\end{abstract}
\maketitle

\section{Introduction}

The Brunn--Minkowski theory studies geometric functionals of convex bodies and the measures arising as their differentials.  One of its basic problems is to decide which measures arise in this way.  The functional studied here comes from geometric probability.  Let $K\subset\R^n$ be a convex body, and let $X$ and $Y$ be independent uniform points of $K$.  Distances are normalized by the radius of the Euclidean ball with the same volume as convex body $K$:
\[
 R_K:=\left(\frac{|K|}{\omega_n}\right)^{1/n},
 \qquad D_K(\rho):=\mathbb P\left\{\frac{\abs{X-Y}}{R_K}<\rho\right\},\qquad \rho>0.
\]
The function $D_K$ is invariant under translations and dilations.  At a given $\rho$, it is the proportion of ordered pairs of points in $K$ whose distance is less than $\rho R_K$.  Random chords and distance distributions belong to the classical subject of integral geometry; see Santal\'o \cite{Santalo2004} and Schneider--Weil \cite{SchneiderWeil2008}.  The normalization by $R_K$ compares different bodies at one relative scale.

We work with the homogeneous form of this distribution,
\begin{equation}
 \mathcal J_\rho(K):=|K|^2D_K(\rho)
 =\iint_{K\times K}\kai_{\{\abs{x-y}<\rho R_K\}}\,dx\,dy.
\label{eq:intro-energy}
\end{equation}
The functional $\mathcal J_\rho$ is homogeneous of degree $2n$, the homogeneity compatible with Minkowski addition and Wulff variation.  We study its differential, the associated Minkowski problem, and the corresponding Brunn--Minkowski, rigidity, and regularity questions.

Let
\begin{equation}
 g_K(z):=|K\cap(K+z)|,\qquad z\in\R^n.
\label{eq:intro-covariogram}
\end{equation}
Then $|K|^{-2}g_K(z)\,dz$ is the law of $X-Y$, and consequently
\begin{equation}
 D_K(\rho)=\frac1{|K|^2}\int_{B_{\rho R_K}}g_K(z)\,dz.
\label{eq:intro-displacement-profile}
\end{equation}
The covariogram occurs in random-set theory, geometric probability, chord-distribution questions, and phase retrieval; see Matheron \cite{Matheron1975}, Santal\'o \cite{Santalo2004}, Schneider--Weil \cite{SchneiderWeil2008}, and Bianchi's survey \cite{Bianchi2023}.  Formula~\eqref{eq:intro-displacement-profile} expresses $D_K$ as the radial distribution function of the normalized displacement $X-Y$.  For the corresponding random-chord distribution without the normalization by $R_K$, see Ren \cite{Ren2014}.

The covariogram also occurs in affine convex geometry.  The superlevel sets
\[
 C_t(K):=\{z\in\R^n:g_K(z)\ge t\},\qquad 0<t<|K|,
\]
are the convolution bodies of $K$.  Their behavior as $t\uparrow|K|$ yields a covariogram description of affine surface area; see Gardner \cite{Gardner2006}, notes 9.3.  The layer-cake formula gives
\[
 \mathcal J_\rho(K)=\int_{B_{\rho R_K}}g_K(z)\,dz
 =\int_0^{|K|}|B_{\rho R_K}\cap C_t(K)|\,dt.
\]
Thus $\mathcal J_\rho$ is a Euclidean radial average of the convolution bodies.  It is not affine invariant, since the question concerns Euclidean relative distances under Minkowski variation.

The choice $r_K=\rho R_K$ has two simple consequences.  First, the ball used to test the displacement $X-Y$ has fixed relative volume
\begin{equation}
 \frac{|B_{r_K}|}{|K|}=\rho^n,
\label{eq:intro-relative-ball-volume}
\end{equation}
so $\rho$ has the same meaning for every body.  Second,
\begin{equation}
 \mathcal J_\rho(\lambda K)=\lambda^{2n}\mathcal J_\rho(K),
 \qquad D_{\lambda K}(\rho)=D_K(\rho)\qquad(\lambda>0).
\label{eq:intro-scaling}
\end{equation}
Neither assertion holds for a fixed radius.  This is the reason for allowing the radius to depend on the volume.

Fubini's theorem also gives the finite-range representation
\begin{equation}
 \mathcal J_\rho(K)=|K|\,|B_{r_K}|-\Per_{r_K}(K)
 =\rho^n|K|^2-\Per_{r_K}(K),
\label{eq:intro-nonlocal-perimeter}
\end{equation}
where
\[
 \Per_r(K):=\iint_{K\times(\R^n\setminus K)}\kai_{\{\abs{x-y}<r\}}\,dx\,dy.
\]
On a fixed-volume class, maximizing the short-pair probability is equivalent to minimizing a compact-range nonlocal perimeter; compare Cesaroni--Novaga \cite{CesaroniNovaga2018}.  Moreover, as $\rho\downarrow0$, the first correction to the short-pair probability is the dimensionless isoperimetric ratio:
\begin{equation}
 D_K(\rho)=\rho^n-c_n\frac{P(K)}{R_K^{n-1}}\rho^{n+1}+o(\rho^{n+1})
 \qquad(\rho\downarrow0).
\label{eq:intro-small-rho}
\end{equation}
On the other hand, $D_K(\rho)=1$ when $\rho\ge\operatorname{diam}K/R_K$.  Since this ratio is at least $2$, with equality only for balls, $\rho=2$ is the saturation value for the ball of volume $|K|$.

On a fixed-volume class, $r_K$ is constant and \eqref{eq:intro-energy} is a Riesz--Sobolev functional.  The Riesz rearrangement inequality and its equality cases identify the ball as the maximizer in the nondegenerate range; see \cite{BLL74, Burchard1996}.  Quantitative versions for the ball kernel are due to Christ \cite{Christ2017} and Frank--Lieb \cite{FrankLieb2019}.  Here the volume, and hence the range in \eqref{eq:intro-energy}, changes under Wulff variation.  The derivative of $r_K$ produces the scalar term below.

For a Wulff perturbation $K_t=[h_K+tf]$, the differential of the probability profile is
\begin{equation}
 \left.\frac{d}{dt}\right|_{t=0}D_{K_t}(\rho)
 =\frac1{|K|^2}\left[\int_{\Sn}f\,d\mathcal M_{\rho,K}
 -\frac{2\mathcal J_\rho(K)}{|K|}\int_{\Sn}f\,dS_K\right].
\label{eq:intro-normalized-variation}
\end{equation}
Here $\mathcal M_{\rho,K}$ is defined in Section~\ref{subsec:functional-and-measure}.  The first term is the fixed-range boundary contribution, and the second comes from the variation of $r_K$.  Thus, under volume-preserving perturbations, $\mathcal M_{\rho,K}$ gives the first-order change of $D_K$.

For any convex body and any continuous Wulff perturbation direction, we prove
\begin{equation}
 \left.\frac{d}{dt}\right|_{t=0}\mathcal J_\rho([h_K+tf])
 =\int_{\Sn}f\,d\mathcal M_{\rho,K},
\label{eq:intro-first-variation}
\end{equation}
no smoothness or strict convexity of $K$ is assumed.  This formula
defines the measure $\mathcal M_{\rho,K}$ and is the weak, or
Aleksandrov, starting point for the theory.

The first structural question is whether this new functional has the
same concavity under Minkowski addition as volume.  This is not a
formal consequence of the Brunn--Minkowski inequality in $\mathbb R^n$:
both the two copies of the body and the admissible chord length vary.
The appropriate object is the set of admissible ordered chords.  It is
convex in $\mathbb R^{2n}$, and its volume is exactly
$\mathcal J_\rho(K)$.  This observation gives the following result.

\begin{theorem}[Brunn--Minkowski theorem]
\label{thm:intro-adaptive-BM}
For $K,L\in\mathcal K^n$ and $0\leq\lambda\leq1$,
\begin{equation}
 \mathcal J_\rho((1-\lambda)K+\lambda L)^{1/(2n)}
 \geq (1-\lambda)\mathcal J_\rho(K)^{1/(2n)}
       +\lambda\mathcal J_\rho(L)^{1/(2n)}.
\label{eq:intro-BM}
\end{equation}
If $0<\lambda<1$, equality holds if and only if $K$ and $L$ are
homothetic.
\end{theorem}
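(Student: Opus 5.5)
The plan is to realize $\mathcal J_\rho(K)$ as the volume of a convex body in $\R^{2n}$ and then apply the classical Brunn--Minkowski inequality in dimension $2n$. Set
\[
 A_K:=\{(x,y)\in K\times K:\ \abs{x-y}\le \rho R_K\}\subset\R^{2n},
\]
so that $|A_K|_{2n}=\mathcal J_\rho(K)$; the sphere $\abs{x-y}=\rho R_K$ is null. $A_K$ is convex because it is the intersection of the convex set $K\times K$ with the convex cylinder $\{\abs{x-y}\le r\}$. It has nonempty interior, since near any $(x,x)$ with $x\in\operatorname{int}K$ it contains a small ball, so it is a convex body.

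The key inclusion is, with $M=(1-\lambda)K+\lambda L$,
\[
 (1-\lambda)A_K+\lambda A_L\subseteq A_M .
\]
For $(x,y)\in A_K$ and $(x',y')\in A_L$, the point $((1-\lambda)x+\lambda x',(1-\lambda)y+\lambda y')$ lies in $M\times M$. Its difference satisfies $\abs{\cdot}\le(1-\lambda)\rho R_K+\lambda\rho R_L$. So we need $(1-\lambda)R_K+\lambda R_L\le R_M$, which is exactly the classical Brunn--Minkowski inequality $|M|^{1/n}\ge(1-\lambda)|K|^{1/n}+\lambda|L|^{1/n}$ after dividing by $\omega_n^{1/n}$. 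This step is where the volume normalization of the radius is essential. Then Brunn--Minkowski in $\R^{2n}$ gives
\[
 \mathcal J_\rho(M)^{1/(2n)}\ge|(1-\lambda)A_K+\lambda A_L|^{1/(2n)}\ge(1-\lambda)|A_K|^{1/(2n)}+\lambda|A_L|^{1/(2n)},
\]
which is \eqref{eq:intro-BM}.

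For equality, the "if" direction follows from translation invariance and the $2n$-homogeneity \eqref{eq:intro-scaling}. If $L=sK+v$, then $M=(1-\lambda+\lambda s)K+\lambda v$, and both sides equal $(1-\lambda+\lambda s)\mathcal J_\rho(K)^{1/(2n)}$. For "only if", equality in the chain forces equality in the $2n$-dimensional Brunn--Minkowski inequality for the convex bodies $A_K,A_L$, so $A_L=sA_K+w$ for some $s>0$ and $w\in\R^{2n}$. This is the step I expect to need the most care. I would recover $K$ from $A_K$ by projecting onto the first factor: $\pi_1(A_K)=K$, because $(x,x)\in A_K$ for every $x\in K$. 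Hence $L=\pi_1(A_L)=sK+\pi_1(w)$, so $K$ and $L$ are homothetic.

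Alternatively, equality also forces equality in the inclusion step, hence in the $n$-dimensional Brunn--Minkowski inequality for $K,L$, and this yields homothety directly. Either route works, and the projection argument avoids analyzing when the inclusion is strict. One should also check that the equality case of Brunn--Minkowski for convex bodies (homothety, for $0<\lambda<1$) applies, which it does since $A_K$ and $A_L$ have interior.
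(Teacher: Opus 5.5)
Your main argument is correct and is the paper's proof: the same lift $\{(x,y)\in K\times K:\abs{x-y}\le r_K\}$, the inclusion of the Minkowski combination of lifts via the $n$-dimensional Brunn--Minkowski inequality for the radii, the $2n$-dimensional Brunn--Minkowski inequality, and the same projection argument to recover homothety of $K$ and $L$ from homothety of the lifts. Keep the projection route, though, because the alternative you sketch is not valid as stated: in the saturated regime, e.g.\ $\rho\ge\max\{\diam K/R_K,\diam L/R_L,\diam M/R_M\}$, all three lifts are products ($A_K=K\times K$, etc.), so $(1-\lambda)A_K+\lambda A_L=A_M$ holds for every pair $K,L$, and equality in the inclusion step says nothing about equality in the $n$-dimensional inequality.
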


The proof represents \(\mathcal J_\rho(K)\) as the \(2n\)-dimensional volume of a convex body in \(\mathbb R^{2n}\). Differentiating \eqref{eq:intro-BM} yields the
first Minkowski inequality
\begin{equation}
 \frac1{2n}\int_{\Sn}h_L\,d\mathcal M_{\rho,K}
 \geq\mathcal J_\rho(K)^{(2n-1)/(2n)}
       \mathcal J_\rho(L)^{1/(2n)},
\label{eq:intro-first-Minkowski}
\end{equation}
again with equality precisely for homothetic bodies.

The associated Minkowski problem is more subtle than the classical
surface area problem.  The measure $\mathcal M_{\rho,K}$ is a weighted
surface area measure, but its weight is a ball-intersection volume and
also contains the global correction caused by the variation of $r_K$.
Thus neither the density nor the scale can be prescribed independently
of the unknown body.  The first-variation formula and the preceding
concavity identify the correct weak formulation and permit a direct
variational treatment.

\begin{theorem}[Weak Minkowski theorem]
\label{thm:intro-weak-Minkowski}
Let $\mu$ be a nonzero finite positive Borel measure on $\Sn$.  There
exists a convex body $K\in\mathcal K^n$ such that
\begin{equation}
 \mathcal M_{\rho,K}=\mu
\label{eq:intro-measure-problem}
\end{equation}
if and only if
\[
 \int_{\Sn}u\,d\mu(u)=0
 \quad\text{and}\quad
 \mu\ \text{is not concentrated on a great subsphere}.
\]
The solution is unique up to translation.
\end{theorem}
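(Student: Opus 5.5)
Necessity comes first. By translation invariance of $\mathcal J_\rho$, taking $f(u)=\langle v,u\rangle$ in \eqref{eq:intro-first-variation} (so that $[h_K+tf]=K+tv$) gives $\int_{\Sn}\langle v,u\rangle\,d\mathcal M_{\rho,K}(u)=0$ for every $v\in\R^n$; hence $\mu$ has zero centroid. For the subsphere condition I would exploit the structure of $\mathcal M_{\rho,K}$ as a weighted surface area measure, $d\mathcal M_{\rho,K}=w_K\,dS_K$ minus the scalar correction $c_K\,dS_K$ with $c_K=\rho^n\cdot(\ldots)$ coming from the variation of $r_K$. The aim is to show $\mathcal M_{\rho,K}\ge c\,S_K$ for some $c>0$ depending on $K$. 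Alternatively, and more robustly, I would use $2n$-homogeneity: $\int h_K\,d\mathcal M_{\rho,K}=2n\mathcal J_\rho(K)>0$. If $\mu$ were concentrated on $\Sph\cap v^\perp$, then varying by $f=|\langle v,\cdot\rangle|$ would give zero first variation. But $[h_K+t|\langle v,\cdot\rangle|]\supseteq K+t[-v,v]$, and $\mathcal J_\rho$ increases strictly to first order under this enlargement. This is a contradiction, provided one checks that the first-order increase in the direction of a segment is positive; that is precisely the positivity of $\int h_{[-v,v]}\,d\mathcal M_{\rho,K}$, which follows from the first Minkowski inequality \eqref{eq:intro-first-Minkowski} applied to $L=K+[-v,v]$ together with monotonicity.

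For sufficiency I would use a direct variational method. Consider
\[
 \Phi(K)=\int_{\Sn}h_K\,d\mu,
\]
and minimize $\Phi$ over bodies with $\mathcal J_\rho(K)=1$, translated so that, say, the Steiner point is at the origin; by the zero-centroid condition $\Phi$ is translation invariant. Coercivity follows because $\mu$ is not concentrated on a great subsphere: $\int\langle u,v\rangle_+\,d\mu\ge c_0>0$ uniformly for $v\in\Sph$, so $\Phi(K)\ge c_0\cdot\max$ width-type quantity, and minimizing sequences are bounded. Nondegeneracy of the limit is the next point. Since $\mathcal J_\rho(K)\le\rho^n|K|^2$ by \eqref{eq:intro-nonlocal-perimeter}, the constraint $\mathcal J_\rho=1$ forces $|K|\ge\rho^{-n/2}$. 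By Blaschke selection and continuity of $\mathcal J_\rho$ in the Hausdorff metric (which should come from the covariogram representation, because $R_K$ is continuous), the limit is a convex body attaining the minimum.

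For the Euler--Lagrange step, for continuous $f$ set $K_t=[h_K+tf]$. Then $h_{K_t}\le h_K+tf$, and $\mathcal J_\rho$ is positively $2n$-homogeneous, so comparing with the rescaled body $\mathcal J_\rho(K_t)^{-1/(2n)}K_t$ via the standard Aleksandrov lemma ($h_{K_t}=h_K+tf$ $S_K$-a.e. to first order) together with \eqref{eq:intro-first-variation} gives
\[
 \int f\,d\mu=\lambda\int f\,d\mathcal M_{\rho,K}
\]
for some Lagrange constant $\lambda>0$. Here the adaptation is that one must use $\mathcal M_{\rho,K}$ in the role of $S_K$, and $\mathcal M_{\rho,K}$ needs to be absolutely continuous with respect to $S_K$ for the Aleksandrov lemma to apply; this should hold from its definition as a weighted surface measure. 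Rescaling by $\lambda^{1/(2n-1)}$, using homogeneity of degree $2n-1$ of $\mathcal M_{\rho,\cdot}$, then yields $\mathcal M_{\rho,K}=\mu$.

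Uniqueness follows from the equality case of \eqref{eq:intro-first-Minkowski}. If $\mathcal M_{\rho,K}=\mathcal M_{\rho,L}=\mu$, then
\[
 \frac1{2n}\int h_L\,d\mu\ge\mathcal J_\rho(K)^{(2n-1)/(2n)}\mathcal J_\rho(L)^{1/(2n)},
\]
while $\frac1{2n}\int h_L\,d\mu=\frac1{2n}\int h_L\,d\mathcal M_{\rho,L}=\mathcal J_\rho(L)$, so $\mathcal J_\rho(L)\ge\mathcal J_\rho(K)$. By symmetry the two are equal, equality holds in \eqref{eq:intro-first-Minkowski}, and so $K$ and $L$ are homothetic with equal $\mathcal J_\rho$, hence translates. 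I expect the main obstacle to be the necessity of the subsphere condition and the positivity of $\mathcal M_{\rho,K}$ near the correction term; the existence part is routine once continuity of $\mathcal J_\rho$ and the absolute continuity $\mathcal M_{\rho,K}\ll S_K$ are in hand.
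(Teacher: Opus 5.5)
Your argument is correct. Existence and uniqueness follow the paper's route up to reformulation. The paper maximizes $\mathcal J_\rho$ under $\int h_K\,d\mu=1$ instead of minimizing $\int h_K\,d\mu$ under $\mathcal J_\rho=1$, which is equivalent by homogeneity. Its Euler--Lagrange step is your comparison, written as maximality of $\mathcal J_\rho([g])/(\int g\,d\mu)^{2n}$ at $g=h_{K_0}$. Uniqueness uses the same first-Minkowski equality argument.

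The genuine differences are in two places. For nondegeneracy, you use $\mathcal J_\rho(K)\le\rho^n|K|^2$, which bounds the volume of every competitor below by $\rho^{-n/2}$ a priori. The paper instead compares the limit with a ball competitor of positive energy. For necessity of the subsphere condition, the paper argues from the density: $\beta_{\rho,K}\ge0$ and $q_{r_K,K}>0$ on $\partial K$, so $\mathcal M_{\rho,K}$ and $S_K$ have the same null sets. Your segment argument is softer, using only the first Minkowski inequality and monotonicity, so it would work for any functional with those two properties. The paper's observation gives more, namely mutual absolute continuity with a weight bounded below, which is reused in Section~\ref{sec:regularity}.

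A few points need tidying.
\begin{itemize}
\item The correction term enters $\mathcal M_{\rho,K}$ with a plus sign; the minus sign in \eqref{eq:intro-normalized-variation} comes from dividing by $|K|^2$. So the weight is at least $2q_{r_K,K}\ge 2\min\{1,r_K/\diam K\}^n|K|>0$, and the obstacle you anticipate does not exist.
\item In the segment argument you need strict monotonicity, $\mathcal J_\rho(K+[-v,v])>\mathcal J_\rho(K)$; plain monotonicity only gives $\int|\langle v,u\rangle|\,d\mathcal M_{\rho,K}\ge0$. Strict monotonicity holds: for $L\supset K$ with $|L\setminus K|>0$ one has $r_L\ge r_K$, and $(L\setminus K)\times(L\setminus K)$ contains a positive-measure set of pairs at distance $<r_L$.
\item The bound $c_0>0$ uses the zero-centroid condition as well as non-concentration.
\item Neither the Aleksandrov lemma nor $\mathcal M_{\rho,K}\ll S_K$ is needed. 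The formula \eqref{eq:intro-first-variation} already holds for every continuous $f$, and $h_{[h_K+tf]}\le h_K+tf$ lets you replace $\int h_{K_t}\,d\mu$ by the linear quantity $\int(h_K+tf)\,d\mu$. You should not differentiate $\int h_{K_t}\,d\mu$ directly, since that would require $\mu\ll S_K$, which is not known in advance.
\end{itemize}
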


Thus the measure equation \eqref{eq:intro-measure-problem} has a
complete weak-solvability theory before any regularity is imposed on
either the body or the datum.

The variational results also have two distinct rigidity consequences.
On a fixed-volume class the radius $r_K$ is fixed, so the problem falls
within the Riesz rearrangement framework.  Its equality theory gives
the global result below; the endpoint $\rho=2$ is handled by the
isodiametric inequality.  On the other hand, stationarity for arbitrary
ambient deformations yields a pointwise ball-intersection condition.
In the subcritical regime this is the uniform-density condition studied
by Bucur and Fragal\`a \cite{BF23}.

\begin{theorem}[Rigidity]
\label{thm:intro-rigidity}
Let $B_K$ be a Euclidean ball with $|B_K|=|K|$.
\begin{enumerate}[label=\textup{(\roman*)},leftmargin=2.2em]
\item If $0<\rho\leq2$, then
\[
 \mathcal J_\rho(K)\leq\mathcal J_\rho(B_K),
\]
and equality holds if and only if $K$ is a Euclidean ball, up to
translation.  For $\rho>2$, the ball is still a maximizer, but
maximizers are not unique.
\item Let $0<\rho<2$.  If a convex body $K$ is stationary for
$\mathcal J_\rho$ under every compactly supported $C^1$
volume-preserving deformation, then $K$ is a Euclidean ball.
\end{enumerate}
\end{theorem}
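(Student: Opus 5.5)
For part (i) I would work on the fixed-volume class, where the proof is a pure rearrangement argument. Since $|B_K|=|K|$, the radius $r:=\rho R_K=\rho R_{B_K}$ is the same for $K$ and $B_K$. Hence
\[
 \mathcal J_\rho(K)=\iint \kai_K(x)\,\kai_{B_r}(x-y)\,\kai_K(y)\,dx\,dy
\]
is a Riesz--Sobolev trilinear form in which the kernel $\kai_{B_r}$ is already symmetric decreasing. The Riesz rearrangement inequality \cite{BLL74} then gives $\mathcal J_\rho(K)\le\mathcal J_\rho(B_K)$.

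For the equality case I would invoke Burchard's theorem \cite{Burchard1996}. The three radii are $R_K,R_K,r$, and they satisfy the strict triangle inequalities exactly when $r<2R_K$, that is, $\rho<2$. In that range equality forces both copies of $K$ to be, up to null sets, translates of balls. Since $K$ is a convex body, $K$ itself is a ball.

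The endpoint $\rho=2$ lies outside Burchard's range and needs a separate argument:
\begin{itemize}
\item The ball satisfies $\mathcal J_2(B_K)=|K|^2$, because $\operatorname{diam}B_K=2R_K$.
\item The trivial bound $\mathcal J_\rho(K)\le|K|^2$ holds for every $K$.
\item Equality means that almost every pair $x,y\in K$ satisfies $|x-y|<2R_K$, hence $\operatorname{diam}K\le 2R_K$.
\item The isodiametric inequality gives $\operatorname{diam}K\ge2R_K$, with equality only for balls.
\end{itemize}

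For $\rho>2$ the same trivial bound shows that every $K$ with $\operatorname{diam}K\le\rho R_K$ is a maximizer. A slightly eccentric ellipsoid of volume $|K|$ has diameter close to $2R_K<\rho R_K$, so maximizers are not unique.

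For part (ii) I would first compute the first variation under a compactly supported $C^1$ vector field $V$ with flow $\Phi_t$ and $|\Phi_t(K)|=|K|$. Because the volume is fixed, $r$ is fixed, and symmetry of the kernel gives
\[
 \frac{d}{dt}\Big|_{t=0}\mathcal J_\rho(\Phi_t K)=2\int_{\partial K}|K\cap B_r(x)|\,V\cdot\nu_K\,d\mathcal H^{n-1}(x).
\]
This holds on Lipschitz boundaries, which convex bodies have. It follows from the change of variables $x\mapsto\Phi_t(x)$, the divergence theorem, and the continuity of $x\mapsto|K\cap B_r(x)|$.

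Volume preservation to first order means $\int_{\partial K}V\cdot\nu_K\,d\mathcal H^{n-1}=0$. The normal components $V\cdot\nu_K$ of such fields exhaust a dense set of mean-zero continuous functions on $\partial K$. Stationarity therefore yields a Lagrange multiplier: $|K\cap B_r(x)|$ is constant for $x\in\partial K$.

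The last step is to feed this uniform-density condition into the theorem of Bucur and Fragal\`a \cite{BF23}, which says that a convex body with constant boundary ball-density is a ball in the subcritical range. The main point to check is that $r$ lies in their admissible range. Here $r=\rho R_K<2R_K\le\operatorname{diam}K$ by the isodiametric inequality. So no ball $B_r(x)$ centred on $\partial K$ contains $K$, and the condition is nontrivial. This is precisely where $\rho<2$ is needed: for $\rho\ge2$ and $K$ a ball one gets $|K\cap B_r(x)|\equiv|K|$ on $\partial K$, and any body with small diameter is stationary as well.

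I expect the main obstacle to be matching our hypotheses to those of \cite{BF23}, namely the regularity of $\partial K$ and the exact range of radii. If their range is stated as $r<\operatorname{diam}K$, the inequality above suffices. If it is stated differently, I would reduce to it by first showing that the constant density forces smoothness of $\partial K$.
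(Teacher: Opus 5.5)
Your proposal is correct and follows the paper's proof essentially step by step. For (i) it uses Riesz rearrangement with Burchard's equality theorem for $\rho<2$ (the ball kernel forces Burchard's ellipsoid to be a ball), the trivial bound $\mathcal J_\rho\le|K|^2$ plus the isodiametric inequality at $\rho=2$, and ellipsoidal perturbations for $\rho>2$; for (ii) it uses the boundary first variation, a Lagrange multiplier giving constant $q_{r_K,K}$ on $\partial K$, and Bucur--Fragal\`a with $r_K<\operatorname{diam}K$. The one point to tighten is that the paper's constraint is only first order ($\left.\frac{d}{dt}\right|_{t=0}|K_t|=0$), so $r_{K_t}$ is not literally fixed; you should instead show that the moving-radius contribution equals $A_K(r_K)\,\dot r_0=0$, using $\|g_{K_t}-g_K\|_\infty\le2|K_t\triangle K|$, exactly as the paper does.
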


Part~\textup{(i)} is the fixed-volume, global statement; part~\textup{(ii)}
is its strong Euler--Lagrange counterpart.  We also consider the weaker
Wulff stationarity equation $\mathcal M_{\rho,K}=cS_K$.  Under strict
convexity it likewise implies that $K$ is a ball, whereas without strict
convexity it controls only averages over normal fibres and is not
equivalent to strong stationarity.

We next ask how much regularity follows from the weak equation itself.
The density of the corresponding surface-area measure contains the
nonlocal factor $q_{r_K,K}$ evaluated at a boundary point of the unknown
body.  The first step is therefore not a bootstrap: one must first
compare this factor from above and below with constants depending only on
the body.  This converts the equation, in gnomonic coordinates, into a
uniformly elliptic Aleksandrov Monge--Amp\`ere equation.  The subsequent
bootstrap uses the regularity of the moving ball-intersection volume
along $\partial K$.

\begin{theorem}[Regularity]
\label{thm:intro-regularity}
Suppose that $\mathcal M_{\rho,K}=f\,d\sigma$.
\begin{enumerate}[label=\textup{(\roman*)},leftmargin=2.2em]
\item If $0<f_-\leq f\leq f_+<\infty$, then $K$ is strictly convex and
      $h_K\in C^{1,\gamma}(\Sn)$ for some $\gamma\in(0,1)$.
\item If $f\in C^\alpha(\Sn)$ and $f>0$, where $0<\alpha<1$, then
\begin{equation}
 h_K\in C^{2,\alpha}(\Sn),
 \qquad
 \nabla_{\Sn}^2h_K+h_KI>0\quad\text{on }\Sn.
\label{eq:intro-C2alpha-regularity}
\end{equation}
\item If $f\in C^{k,\alpha}(\Sn)$, $r_K<\diam K$, and
      $\Theta_{r_K}(K)>0$, then
      $h_K\in C^{k+2,\alpha}(\Sn)$.  Here
      $\Theta_{r_K}(K)>0$ is the uniform chord-transversality condition
      defined in Section~4.
\end{enumerate}
When $r_K\geq\diam K$, the equation reduces to the classical Minkowski
equation.
\end{theorem}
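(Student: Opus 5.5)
The plan is to turn the weak equation $\mathcal M_{\rho,K}=f\,d\sigma$ into a Monge--Amp\`ere equation for $h_K$ whose right-hand side is bounded between two positive constants, and then to bootstrap. From the first-variation formula \eqref{eq:intro-first-variation}, together with the structure of $\mathcal M_{\rho,K}$ as a weighted surface area measure, I expect a representation of the form
\[
 d\mathcal M_{\rho,K}(u)=\Phi_K(u)\,dS_K(u),
\]
where $\Phi_K(u)$ is obtained from $2\,q_{r_K,K}(x)=2|B_{r_K}(x)\cap K|$, integrated over the face $x\in F(K,u)$, plus the scalar correction coming from the variation of $r_K$. That correction is a constant multiple of $\mathcal J_\rho(K)/|K|$. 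The equation therefore reads $\Phi_K\,dS_K=f\,d\sigma$.

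\textbf{Step 1 (two-sided bound on the weight).} This is the main obstacle. For $x\in\partial K$, I bound $q_{r,K}(x)$ from above by $\min(|K|,\omega_n r^n)$. For a lower bound I use a cone argument: $K$ contains the convex hull of $x$ and an inscribed ball $B_{r_0}(c)$, so $|B_r(x)\cap K|\ge c(n)\min(r,r_0)^n (r_0/\diam K)^{n-1}$, and this is independent of $x$. The correction term has a definite sign. I need $\Phi_K$ to stay positive, so I would check that the combination $\Phi_K$ is bounded below by a positive constant depending only on $r_K$, the inradius and $\diam K$. Possibly this uses positivity of $\mathcal M_{\rho,K}$, which is known from the weak theory since the datum equals $f\,d\sigma>0$. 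The bodies in question are fixed, so the constants only need to depend on $K$.

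It follows that $S_K=(f/\Phi_K)\,d\sigma$, with density pinched between two positive constants when $f_-\le f\le f_+$. In gnomonic coordinates this is an Aleksandrov solution of $\det D^2 v=g$ with $0<\lambda\le g\le\Lambda$. Caffarelli's theory then gives strict convexity, since a line segment in $\partial K$ would violate the density upper bound by Caffarelli's extremal-point argument. It also gives $C^{1,\gamma}$ regularity of $h_K$, which proves (i).

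\textbf{Step 2 (H\"older regularity of the weight).} Once $K$ is strictly convex, faces are points and $\Phi_K(u)=2q_{r_K,K}(\nabla h_K(u))-c_K$. The map $x\mapsto|B_r(x)\cap K|$ is Lipschitz on $\R^n$, with gradient a boundary integral over $\partial B_r(x)\cap K$. Since $\nabla h_K\in C^{0,\gamma}$, the weight $\Phi_K$ lies in $C^{0,\gamma}$. Caffarelli's $C^{2,\alpha'}$ theory for $\det D^2v=g\in C^{\alpha'}$ then gives $h_K\in C^{2,\min(\alpha,\gamma)}$. Now $\nabla h_K\in C^{1,\cdot}$ makes $\Phi_K$ Lipschitz, and a second pass yields $C^{2,\alpha}$. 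Positivity of $\nabla^2h_K+h_KI$ follows from the determinant equation, since the density is bounded and positive. This proves (ii).

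\textbf{Step 3 (higher regularity).} To reach $C^{k+2,\alpha}$ I need $x\mapsto q_{r,K}(x)$, restricted to $\partial K$, to gain derivatives. Its derivatives are integrals over the sphere pieces $\partial B_r(x)\cap K$, and their smooth dependence on $x$ requires $\partial B_r(x)$ to cross $\partial K$ transversally. This is exactly what $r_K<\diam K$ together with $\Theta_{r_K}(K)>0$ should guarantee: the intersection $\partial B_r(x)\cap\partial K$ is then a $C^{m,\alpha}$ submanifold whenever $\partial K\in C^{m,\alpha}$. From this, $h_K\in C^{m,\alpha}$ implies $\Phi_K\in C^{m-1,\alpha}$, and Schauder theory for the linearized Monge--Amp\`ere operator improves this by one derivative. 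Induction up to $f\in C^{k,\alpha}$ proves (iii).

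Finally, if $r_K\ge\diam K$, then $B_{r_K}(x)\supset K$ for every $x\in K$. So $q\equiv|K|$, $\Phi_K$ is a positive constant, and the equation is the classical Minkowski equation.
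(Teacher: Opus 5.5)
Your overall plan matches the paper's: pinch the boundary weight, compare $\mathcal M_{\rho,K}$ with $S_K$, apply Caffarelli's theory in gnomonic charts, run the two-step H\"older bootstrap using the Lipschitz bound for $q_{r_K,K}$, use chord transversality for the higher derivatives, and use $q_{r_K,K}\equiv|K|$ in the saturated case. But the step you single out as the main obstacle is left open, and as you set it up it cannot be closed. You take the $r_K$-correction to be a multiple of $\mathcal J_\rho(K)/|K|$ entering with a minus sign, $\Phi_K=2q_{r_K,K}\circ\nabla h_K-c_K$. That is not the weight of $\mathcal M_{\rho,K}$. The term $-2\mathcal J_\rho(K)|K|^{-1}\int f\,dS_K$ in the variation of $D_K$ comes from differentiating the factor $|K|^{-2}$ in $D_K=\mathcal J_\rho(K)/|K|^2$. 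It is not part of $\mathcal M_{\rho,K}$, which is the variation of $\mathcal J_\rho$ itself. With a subtracted term of that size, no positive lower bound is available: for a ball and small $\rho$, one has $2q_{r_K,K}\approx\omega_n r_K^n$ on the boundary, while $2\mathcal J_\rho(K)/|K|\approx 2\omega_n r_K^n$. Positivity of $f\,d\sigma$ does not rescue this. It gives at most a qualitative sign condition on $\Phi_K$, not the uniform lower bound that keeps the density of $S_K$ bounded above, and that bound is exactly what Caffarelli's theory needs.

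The missing computation is the chain rule for the moving radius. You have $\frac{d}{dr}\mathcal E_r(K)=A_K(r)=r^{n-1}\int_{\Sn}g_K(ru)\,d\sigma(u)\geq0$ and $\frac{d}{dt}r_{K_t}|_{t=0}=\frac{r_K}{n|K|}\int_{\Sn}f\,dS_K$. So the first variation of $\mathcal J_\rho$ is the fixed-radius term $2\int_{\partial K}f(\nu_K)\,q_{r_K,K}\,d\mathcal H^{n-1}$ plus $\beta_{\rho,K}\int_{\Sn}f\,dS_K$, where $\beta_{\rho,K}=\frac{r_K}{n|K|}A_K(r_K)$. The weight is therefore $w_K=2q_{r_K,K}+\beta_{\rho,K}$: the correction is added, and $0\leq\beta_{\rho,K}\leq\rho^n|K|$ because $0\leq g_K\leq|K|$. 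Combine this with your cone bound, or with the paper's homothety $x+\tau_K(K-x)\subset K\cap\overline B_{r_K}(x)$ where $\tau_K=\min\{1,r_K/\operatorname{diam}K\}$. This gives $2\tau_K^n|K|\leq w_K\leq(2+\rho^n)|K|$ on $\partial K$, hence $2\tau_K^n|K|\,S_K\leq\mathcal M_{\rho,K}\leq(2+\rho^n)|K|\,S_K$. Make this comparison at the level of the pushforward measures. Before strict convexity is known, the density of $\mathcal M_{\rho,K}$ with respect to $S_K$ is a face average of $w_K$, not a face integral as you wrote. With the weight corrected, your remaining steps go through essentially as in the paper.
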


The profile $D_K$ is coarser than the full covariogram and does not, in
general, determine $K$.  It records precisely the radial part of the law
of $X-Y$ selected by concentric balls.  The small-range expansion
\eqref{eq:intro-small-rho} recovers perimeter to first order, while the
saturated regime reduces the measure equation to the classical Minkowski
problem.

Section~2 establishes the first-variation formula, proves
\Cref{thm:intro-adaptive-BM}, and solves the weak Minkowski problem in
\Cref{thm:intro-weak-Minkowski}.  Section~3 proves
\Cref{thm:intro-rigidity} and then discusses the relation between strong
and Wulff stationarity.  Section~4 starts from weak solutions, derives
the regularity asserted in \Cref{thm:intro-regularity}, and then obtains
higher regularity under chord transversality.  The content of this paper is part of my PhD thesis \cite{Wang26}.

\section{The adaptive covariogram functional and its Minkowski problem}\label{sec:ACFMP}
Throughout this section $n\ge2$, $\rho>0$, and $\mathcal K^n$ denotes the class of convex bodies in $\R^n$.  We write $\omega_n=|B_1|$, $\Sn$ for the Euclidean unit sphere, and $\sigma=\mathcal H^{n-1}(\Sn)$.  If $K\in\mathcal K^n$, then $h_K$, $S_K$, and $\nu_K$ denote its support function, surface area measure, and outer unit normal.  These are the standard conventions of convex geometry; see Schneider \cite{Schneider2014}.

For a continuous function $g$ on $\Sn$, whenever the intersection below is nonempty, its Wulff shape is
\[
 [g]:=\bigcap_{u\in\Sn}\{x\in\R^n:x\cdot u\le g(u)\}.
\]
If $g$ is positive, $[g]$ contains the origin in its interior.  After translating $K$ so that $0\in\operatorname{int}K$, every $f\in C(\Sn)$ gives, for all sufficiently small $|t|$, the Wulff perturbation
\begin{equation}
 K_t=[h_K+tf].
\label{eq:wulff-perturbation}
\end{equation}
\setcounter{equation}{1}

\subsection{The functional and the associated measure}
\label{subsec:functional-and-measure}

For brevity, write $V_K:=|K|$.
For $r>0$, define the fixed-radius covariogram energy
\begin{equation}
 \mathcal E_r(K)
 :=\iint_{K\times K}\kai_{\{|x-y|<r\}}\,dx\,dy,
 \label{eq:fixed-energy}
\end{equation}
and define the adaptive energy by
\begin{equation}
 \mathcal J_\rho(K):=\mathcal E_{r_K}(K).
 \label{eq:adaptive-energy}
\end{equation}
The fixed-radius energy is introduced first only to separate the variation
of the body from the variation of the interaction range. 

\begin{lemma}
\label{lem:adaptive-Hausdorff-continuity}
Let $\rho>0$, and let $K_j,K\subset\mathbb R^n$ be compact convex sets
such that $K_j\to K$ in the Hausdorff metric.  Then
\[
 \mathcal J_\rho(K_j)\longrightarrow\mathcal J_\rho(K).
\]
\end{lemma}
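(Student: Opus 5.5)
We will use the representation $\mathcal J_\rho(K)=\mathcal E_{r_K}(K)$ and separate the convergence of the body from the convergence of the radius. The lemma allows compact convex sets, so degenerate limits (with $|K|=0$) must be covered as well.

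\emph{Step 1: volume and radius.} Volume is continuous on compact convex sets in the Hausdorff metric, so $V_{K_j}\to V_K$. Consequently $r_{K_j}=\rho(V_{K_j}/\omega_n)^{1/n}\to r_K$.

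\emph{Step 2: the degenerate case $|K|=0$.} Here $\mathcal J_\rho(K)=0$. The crude bound $0\le\mathcal J_\rho(K_j)\le V_{K_j}^2\to0$ then gives the claim.

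\emph{Step 3: the case $|K|>0$, so $r:=r_K>0$.} We split
\[
 \abs{\mathcal J_\rho(K_j)-\mathcal J_\rho(K)}
 \le \abs{\mathcal E_{r_{K_j}}(K_j)-\mathcal E_{r}(K_j)}
 +\abs{\mathcal E_{r}(K_j)-\mathcal E_{r}(K)} .
\]

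For the second term, write
\[
 \abs{\kai_{K_j}(x)\kai_{K_j}(y)-\kai_K(x)\kai_K(y)}\le \kai_{K_j\triangle K}(x)+\kai_{K_j\triangle K}(y),
\]
and integrate over a fixed large ball containing all the $K_j$. This bounds the second term by $2\,|K_j\triangle K|\,\sup_j V_{K_j}$. For convex bodies, Hausdorff convergence to a body with nonempty interior implies $|K_j\triangle K|\to0$. One way to see this: $K_j\subset K+\eps_jB_1$, and $K_j\supset$ a shrunken copy of $K$ eventually, or simply use continuity of volume together with $|K_j\cap K|\to|K|$.

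For the first term, note that $\mathcal E_s(L)$ is monotone in $s$. Its increment satisfies
\[
 \abs{\mathcal E_{s}(L)-\mathcal E_{s'}(L)}\le \iint_{L\times L}\kai_{\{\min(s,s')\le|x-y|<\max(s,s')\}}\,dx\,dy\le V_L\,\omega_n\bigl(\max(s,s')^n-\min(s,s')^n\bigr).
\]
With $L=K_j$, $s=r_{K_j}$ and $s'=r$, this tends to $0$ by Step 1 and the uniform bound on $V_{K_j}$.

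The only mildly delicate point is the symmetric-difference convergence $|K_j\triangle K|\to0$. We plan to handle it via $K_j\subset K+\eps_jB_1$, which gives $|K_j\setminus K|\le|(K+\eps_jB_1)\setminus K|\to0$ by the Steiner formula. Symmetrically, $K\subset K_j+\eps_jB_1$ gives $|K\setminus K_j|\le|(K_j+\eps_jB_1)\setminus K_j|$. This is bounded by a Steiner polynomial whose coefficients (quermassintegrals) are uniformly bounded, since the $K_j$ lie in a common ball. Hence it also tends to $0$.
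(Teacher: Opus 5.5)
Your argument is correct, but it takes a different route from the paper's. In the case $|K|>0$ the paper argues qualitatively. The integrand $\kai_{K_j}(x)\kai_{K_j}(y)\kai_{\{|x-y|<r_{K_j}\}}$ converges to the limiting one at every $(x,y)$ outside the null set where $x\in\partial K$, $y\in\partial K$, or $|x-y|=r_K$, and dominated convergence on $B_R\times B_R$ finishes the proof. No estimate on $|K_j\triangle K|$ is ever needed.

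You instead split off the moving radius and bound each piece explicitly, which is the same splitting the paper later uses in the proof of Theorem~\ref{thm:first-variation}. This gives a quantitative statement. From $a_j(x)a_j(y)-a(x)a(y)=(a_j(x)-a(x))a_j(y)+a(x)(a_j(y)-a(y))$ one gets $|\mathcal E_{r}(K_j)-\mathcal E_{r}(K)|\le(|K_j|+|K|)\,|K_j\triangle K|$. Your annulus bound reads $|\mathcal E_{r_{K_j}}(K_j)-\mathcal E_{r}(K_j)|\le\rho^n|K_j|\,\bigl||K_j|-|K|\bigr|$. Together these make $\mathcal J_\rho$ locally Lipschitz with respect to $|\cdot\triangle\cdot|$. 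The argument applies verbatim to measurable sets of finite measure, and the degenerate case need not be treated separately.

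One small correction: your pointwise bound $\kai_{K_j\triangle K}(x)+\kai_{K_j\triangle K}(y)$, integrated over $B_R\times B_R$, gives $2|B_R|\,|K_j\triangle K|$, not $2|K_j\triangle K|\sup_j|K_j|$. The product splitting above yields the constant you intended, and either version suffices.

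The cost of your route is the extra input $|K_j\triangle K|\to0$. Your Steiner argument for it is fine, and its second half can be shortened. Once $|K_j\setminus K|\to0$, continuity of volume gives $|K\setminus K_j|=|K|-|K_j|+|K_j\setminus K|\to0$, with no uniform bound on quermassintegrals needed.
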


\begin{proof}
Suppose first that $|K|=0$. Then $|K_j|\to0$, and the elementary bound
\[
 0\leq\mathcal J_\rho(K_j)\leq |K_j|^2
\]
gives
\[
 \mathcal J_\rho(K_j)\longrightarrow0=\mathcal J_\rho(K).
\]
Assume now that $|K|>0$. Then $K$ is a convex body and
\[
 r_j:=\rho R_{K_j}\longrightarrow r:=\rho R_K>0.
\]
Since $K_j\to K$ in the Hausdorff metric, all the sets $K_j$ and $K$ are
contained in a fixed ball $B_R$. Moreover,
\[
 \kai_{K_j}(x)\longrightarrow\kai_K(x)
\]
for every $x\notin\partial K$. Since the boundary of a convex body has
Lebesgue measure zero, this convergence holds for almost every
$x\in\mathbb R^n$.

For almost every $(x,y)\in B_R\times B_R$, we therefore have
\[
 \kai_{K_j}(x)\kai_{K_j}(y)\longrightarrow
 \kai_K(x)\kai_K(y).
\]
In addition,
\[
 \kai_{\{|x-y|<r_j\}}\longrightarrow\kai_{\{|x-y|<r\}}
\]
whenever $|x-y|\ne r$. The exceptional set
\[
 \{(x,y)\in B_R\times B_R:|x-y|=r\}
\]
has $2n$-dimensional Lebesgue measure zero: for each fixed $x$, its
$y$-section is contained in the sphere $\partial B_r(x)$, which has
$n$-dimensional Lebesgue measure zero. Hence, by Fubini's theorem, the
whole exceptional set is null.

It follows that
\[
 \kai_{K_j}(x)\kai_{K_j}(y)\kai_{\{|x-y|<r_j\}}
 \longrightarrow
 \kai_K(x)\kai_K(y)\kai_{\{|x-y|<r\}}
\]
for almost every $(x,y)\in B_R\times B_R$. Since all these integrands are
bounded by $\kai_{B_R\times B_R}$, the dominated convergence theorem yields
\begin{align*}
 \mathcal J_\rho(K_j)
 &=\int_{B_R}\!\int_{B_R}
 \kai_{K_j}(x)\kai_{K_j}(y)\kai_{\{|x-y|<r_j\}}\,dx\,dy\\
 &\longrightarrow\int_{B_R}\!\int_{B_R}
 \kai_K(x)\kai_K(y)\kai_{\{|x-y|<r\}}\,dx\,dy\\
 &=\mathcal J_\rho(K).
\end{align*}
This proves the claim.
\end{proof}

For $x,z\in\mathbb R^n$, set
\begin{equation}
 q_{r,K}(x):=|K\cap B_r(x)|,
 \label{eq:3}
\end{equation}
 \begin{equation}
g_K(z):=|K\cap(K+z)|,
\label{eq:4}
 \end{equation}
 \begin{equation}
 A_K(r):=r^{n-1}\int_{\mathbb S^{n-1}}g_K(ru)\,d\sigma(u).
 \label{eq:5}
\end{equation}
Both $q_{r,K}$ and $g_K$ are continuous.  By Fubini's theorem and polar
coordinates,
\begin{equation}
 \mathcal E_r(K)=\int_{B_r}g_K(z)\,dz,
 \qquad
 \frac{d}{dr}\mathcal E_r(K)=A_K(r)
 \quad\text{for every }r>0.
 \label{eq:radius-derivative}
\end{equation}

The distinction between the adaptive and fixed-range models is already
visible under dilations.  For a fixed $s>0$,
\begin{equation}
 \mathcal E_s(\lambda K)=\lambda^{2n}\mathcal E_{s/\lambda}(K),
 \qquad \lambda>0,
 \label{eq:fixed-energy-scaling}
\end{equation}
whereas \(\mathcal J_\rho(\lambda K)=\lambda^{2n}\mathcal J_\rho(K)\).
Hence a dilation changes the relative observation scale in the former
experiment, but not in the latter.  Thus the profile $D_K$ defined above
is a scale-free point-pair distance profile.

There is also a useful functional-analytic formulation.  Put
\(f_K=\kai_K\), \(\widetilde f_K(x)=f_K(-x)\), and
\(T_s\varphi=\kai_{B_s}*\varphi\).  Then
\(g_K=f_K*\widetilde f_K\), and
\begin{equation}
 \mathcal J_\rho(K)
 =\langle f_K,T_{r_K}f_K\rangle_{L^2(\mathbb R^n)}
 =\langle g_K,\kai_{B_{r_K}}\rangle_{L^2(\mathbb R^n)}.
 \label{eq:convolution-form}
\end{equation}
Thus, on a fixed-volume class, the functional is the quadratic form of a
bounded self-adjoint convolution operator that is invariant under
translations and rotations.  The covariogram is also called the
set-covariance or autocorrelation of a set; see Matheron
\cite{Matheron1975}.  This formulation places the present functional at the
interface of geometric probability, harmonic analysis, and the
Fourier-analytic study of convex bodies; see also Koldobsky
\cite{Koldobsky2005}.

With the convention
\(\widehat h(\xi)=\int_{\mathbb R^n}e^{-ix\cdot\xi}h(x)\,dx\), Plancherel's
theorem and \eqref{eq:convolution-form} give
\begin{equation}
 \mathcal J_\rho(K)
 =(2\pi)^{-n}\int_{\mathbb R^n}
      |\widehat{\kai_K}(\xi)|^2
      \widehat{\kai_{B_{r_K}}}(\xi)\,d\xi,
 \qquad
 \widehat{\kai_{B_s}}(\xi)
 =(2\pi)^{n/2}s^{n/2}|\xi|^{-n/2}J_{n/2}(s|\xi|).
 \label{eq:fourier-form}
\end{equation}
The second identity is understood by continuity at $\xi=0$.  Hence the hard
distance cutoff is a radial Fourier--Bessel multiplier on the power spectrum
\(|\widehat{\kai_K}|^2\).  The multiplier changes sign, so the positivity
of \(\mathcal J_\rho\) is a geometric fact about characteristic functions,
not positivity of the quadratic form on all of $L^2$.

To make the scale-free content explicit, let
\(\overline K=R_K^{-1}K\), so that \(|\overline K|=\omega_n\).  The
covariogram representation and polar coordinates yield
\begin{equation}
 D_K(\rho)
 =\frac{1}{\omega_n^2}\int_{B_\rho}g_{\overline K}(z)\,dz,\quad
 D_K'(\rho)
 =\frac{\rho^{n-1}}{\omega_n^2}
   \int_{\mathbb S^{n-1}}g_{\overline K}(\rho u)\,d\sigma(u).
 \label{eq:isotropic-covariogram-profile}
\end{equation}
Thus \(D_K'(\rho)\) is the spherical average of the covariogram, recording
the isotropic distribution of normalized point-pair distances.  If
\[
 \mathscr P_K(t):=\int_{\mathbb S^{n-1}}
 \left|\widehat{\kai_{\overline K}}(t\theta)\right|^2\,d\sigma(\theta),
 \qquad t>0,
\]
then \eqref{eq:fourier-form} becomes
\begin{equation}
 D_K(\rho)
 =\frac{(2\pi)^{-n/2}}{\omega_n^2}\rho^{n/2}
   \int_0^\infty\mathscr P_K(t)t^{n/2-1}
      J_{n/2}(\rho t)\,dt.
 \label{eq:radial-power-spectrum}
\end{equation}
Varying $\rho$ therefore selects a scale-free family of radial
Fourier--Bessel probes of the shape of $K$.

The profile retains less information than the full covariogram, which is
equivalent to the Fourier modulus \(|\widehat{\kai_K}|^2\).  The covariogram
problem and its phase-retrieval interpretation are surveyed by
Bianchi~\cite{Bianchi2023}; for reconstruction from Fourier moduli, see
also Bianchi--Gardner--Kiderlen~\cite{BianchiGardnerKiderlen2011}.  Hence
\(\rho\mapsto D_K(\rho)\) should be viewed as a multiscale shape statistic,
not as a complete invariant.  This caution is consistent with the
nonuniqueness phenomena for unoriented chord data found by Mallows and
Clark~\cite{MallowsClark1970}.

Define the scalar correction term
\begin{equation}
 \beta_{\rho,K}
 :=\frac{r_K}{nV_K}A_K(r_K)
 =\frac{r_K^n}{nV_K}
   \int_{\mathbb S^{n-1}}g_K(r_Ku)\,d\sigma(u).
 \label{eq:6}
\end{equation}
Since \(\frac{d}{d\rho}\mathcal J_\rho(K)=R_KA_K(r_K)\), the correction
also satisfies
\begin{equation}
 \frac{\beta_{\rho,K}}{|K|}
 =\frac{\rho}{n}D_K'(\rho).
 \label{eq:beta-distance-profile}
\end{equation}
Thus the normalized correction is $1/n$ times the response of the
short-chord probability to a logarithmic change in the relative scale.
The adaptive covariogram measure of $K$ is the finite Borel measure on
$\mathbb S^{n-1}$ defined by the weighted Gauss-map pushforward
\begin{equation}\label{E:8}
 \mathcal M_{\rho,K}
 :=(\nu_K)_\#\!\left[
 \bigl(2q_{r_K,K}+\beta_{\rho,K}\bigr)
 \mathcal{H}^{n-1}\!\restriction_{\partial K}
 \right].
\end{equation}
Here the Gauss map is taken at the regular boundary points, its arbitrary
Borel extension across the remaining $\mathcal H^{n-1}$-null set does not
affect the measure.
Thus, for every bounded Borel function $\varphi$ on the sphere,
\begin{equation}
 \int_{\mathbb S^{n-1}}\varphi\,d\mathcal M_{\rho,K}
 =\int_{\partial K}\varphi(\nu_K(x))
    \bigl(2q_{r_K,K}(x)+\beta_{\rho,K}\bigr)
    \,d\mathcal H^{n-1}(x).
 \label{eq:M-testing}
\end{equation}

\subsection{First variation for arbitrary convex bodies}

The first variation uses two elementary facts about Wulff perturbations:
uniform control of the perturbed bodies and the localized form of
Aleksandrov's variational formula.

\begin{lemma}
\label{lem:wulff-control}
Let $K\in\mathcal K^n$, let $f\in C(\mathbb S^{n-1})$, and let
$K_t=[h_K+tf]$.  Then there exists $t_0,C>0$, depending only on $K$ and
\(\|f\|_{L^\infty(\mathbb S^{n-1})}\), such that, for
$|t|<t_0$,
\begin{equation}
 (1-\varepsilon_t)K\subset K_t\subset(1+\varepsilon_t)K,
 \qquad
 \varepsilon_t:=\frac{\|f\|_\infty}{\min_{\mathbb S^{n-1}}h_K}|t|,
 \label{eq:wulff-uniform-inclusions}
\end{equation}
and
\begin{equation}
 |K_t\triangle K|\leq C|t|.
 \label{eq:wulff-symmetric-difference}
\end{equation}
\end{lemma}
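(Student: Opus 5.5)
Throughout, translate $K$ so that $0\in\operatorname{int}K$, as in the setup preceding \eqref{eq:wulff-perturbation}; then $h_*:=\min_{\Sn}h_K>0$. The plan is to compare the defining functions directly, using only monotonicity of the Wulff shape and the identity $[h_K]=K$.

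\textbf{Step 1: the inclusions.} For every $u\in\Sn$,
\[
 h_K(u)-|t|\,\|f\|_\infty\le h_K(u)+tf(u)\le h_K(u)+|t|\,\|f\|_\infty .
\]
Since $h_K\ge h_*$, we have $|t|\|f\|_\infty\le\varepsilon_t h_K(u)$. Hence
\[
 (1-\varepsilon_t)h_K\le h_K+tf\le(1+\varepsilon_t)h_K
\]
pointwise on $\Sn$. The Wulff shape is monotone in its defining function, and $[c\,h_K]=cK$ for $c>0$ because $K$ is the Wulff shape of its own support function. Taking $t_0:=h_*/(2\|f\|_\infty)$ (any $t_0$ if $f\equiv0$) makes $\varepsilon_t<1/2$, so $1-\varepsilon_t>0$. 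This gives \eqref{eq:wulff-uniform-inclusions}, and in particular $K_t$ is a convex body containing a neighbourhood of $0$.

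\textbf{Step 2: the symmetric difference.} Both $K$ and $K_t$ lie between $(1-\varepsilon_t)K$ and $(1+\varepsilon_t)K$. Hence
\[
 K_t\triangle K\subset(1+\varepsilon_t)K\setminus(1-\varepsilon_t)K,
\]
and so
\[
 |K_t\triangle K|\le\bigl[(1+\varepsilon_t)^n-(1-\varepsilon_t)^n\bigr]|K|.
\]
By the mean value theorem with $\varepsilon_t\le1/2$, this is at most $2n(3/2)^{n-1}\varepsilon_t|K|$. That bound equals $C|t|$ with
\[
 C=2n(3/2)^{n-1}|K|\,\|f\|_\infty/h_* .
\]
The constant $C$ depends only on $K$ and $\|f\|_\infty$, which gives \eqref{eq:wulff-symmetric-difference}.

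\textbf{Main subtlety.} There is no real obstacle here. The points needing care are the dependence on the chosen translation, since $h_*$ depends on the origin, and the need for positivity of $1-\varepsilon_t$, which is why $t_0$ is tied to $h_*/\|f\|_\infty$. One must also use $[h_K]=K$ together with monotonicity of $g\mapsto[g]$, rather than the false identity $h_{[g]}=g$.
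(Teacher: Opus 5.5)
Your proposal is correct and follows essentially the same route as the paper. Both arguments use the pointwise comparison $(1-\varepsilon_t)h_K\le h_K+tf\le(1+\varepsilon_t)h_K$ with monotonicity of the Wulff shape for the inclusions, and the annulus bound $\bigl[(1+\varepsilon_t)^n-(1-\varepsilon_t)^n\bigr]|K|$ for the symmetric difference; your explicit $t_0=h_*/(2\|f\|_\infty)$ and mean-value constant $2n(3/2)^{n-1}$ simply make the paper's unspecified $C_n$ concrete.
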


\begin{proof}
The assertion is immediate if $f=0$.  Otherwise set
\[
 a:=\min_{\mathbb S^{n-1}}h_K>0,
 \qquad
 M:=\|f\|_\infty,
 \qquad
 \varepsilon_t:=\frac{M}{a}|t|.
\]
For $|t|$ sufficiently small, $\varepsilon_t\leq1/2$, and
\[
 (1-\varepsilon_t)h_K
 \leq h_K+tf
 \leq(1+\varepsilon_t)h_K
 \quad\hbox{on }\mathbb S^{n-1}.
\]
The defining inequalities of the Wulff shape give
\eqref{eq:wulff-uniform-inclusions}.  Since both $K$ and $K_t$ lie
between the same two dilates,
\[
 K_t\triangle K
 \subset(1+\varepsilon_t)K\setminus(1-\varepsilon_t)K.
\]
Consequently,
\[
 |K_t\triangle K|
 \leq\bigl((1+\varepsilon_t)^n-(1-\varepsilon_t)^n\bigr)|K|
 \leq C_n\varepsilon_t|K|,
\]
which is \eqref{eq:wulff-symmetric-difference}.
\end{proof}

\begin{lemma}
\label{lem:weighted-aleksandrov}
Let $K\in\mathcal K^n$, $f\in C(\mathbb S^{n-1})$ and $K_t$ be given by
\eqref{eq:wulff-perturbation}.  If $\psi$ is continuous on a neighborhood
of $K$, then
\begin{equation}
 \left.\frac{d}{dt}\right|_{t=0}
 \int_{K_t}\psi(x)\,dx
 =\int_{\partial K}f(\nu_K(x))\psi(x)
   \,d\mathcal H^{n-1}(x).
 \label{eq:weighted-aleksandrov}
\end{equation}
\end{lemma}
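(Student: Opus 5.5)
The plan is to reduce the weighted formula to Aleksandrov's variational lemma for volume. We localize that lemma with a partition into small sets and use the uniform continuity of $\psi$. Throughout we assume, as in \eqref{eq:wulff-perturbation}, that $0\in\operatorname{int}K$. By Lemma~\ref{lem:wulff-control}, all $K_t$ with $|t|<t_0$ lie in $(1+\varepsilon_{t_0})K$, a fixed compact set on which $\psi$ is uniformly continuous. The classical input is Aleksandrov's lemma in its localized form: for every Borel set $\omega\subset\Sn$ with $S_K(\partial\omega)=0$, the volume of the part of $K_t\triangle K$ lying in the cone $\widehat\omega=\{x: x/|x|\in\omega\}$ over the radial map changes at rate $\int_{\partial K\cap\widehat\omega} f(\nu_K)\,d\mathcal H^{n-1}$. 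Rather than rely on such a cone version, I would use a more robust route, described next.

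Write $\int_{K_t}\psi-\int_K\psi=\int_{K_t\setminus K}\psi-\int_{K\setminus K_t}\psi$. For fixed $\delta>0$, choose a Lipschitz partition of unity $\{\chi_i\}$ on a neighborhood of $\partial K$ subordinate to balls $U_i$ of radius $\delta$, and set $\psi_i=\psi(x_i)$ with $x_i\in U_i$. By Lemma~\ref{lem:wulff-control}, $K_t\triangle K$ lies in the shell $(1+\varepsilon_t)K\setminus(1-\varepsilon_t)K$, which is contained in $\bigcup U_i$ for small $t$. Its measure is $O(|t|)$ by \eqref{eq:wulff-symmetric-difference}. Replacing $\psi$ by $\sum_i\psi_i\chi_i$ on this shell therefore costs at most $\operatorname{osc}_\delta(\psi)\cdot C|t|$, where $\operatorname{osc}_\delta$ denotes the oscillation of $\psi$ at scale $\delta$. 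It thus suffices to prove the formula when $\psi$ is replaced by a fixed Lipschitz function $\chi$ near $\partial K$, and then let $\delta\to0$ using continuity of both sides in $\psi$ in the sup norm. The right side is continuous in $\psi$ because $\mathcal H^{n-1}(\partial K)<\infty$.

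For Lipschitz (or even $C^1$) $\psi$, I would use the radial parametrization $x=s\,\theta$ with $\theta\in\Sn$, and write $\varrho_t$ for the radial function of $K_t$. Then
\[
 \int_{K_t}\psi=\int_{\Sn}\int_0^{\varrho_t(\theta)}\psi(s\theta)s^{n-1}\,ds\,d\theta .
\]
The $t$-derivative at $0$ is $\int_{\Sn}\psi(\varrho_K\theta)\varrho_K^{n-1}\dot\varrho(\theta)\,d\theta$, provided $\dot\varrho=\lim_{t\to0}(\varrho_t-\varrho_K)/t$ exists a.e. and the difference quotients are dominated. Domination follows from $|\varrho_t-\varrho_K|\le\varepsilon_t\varrho_K$, which is Lemma~\ref{lem:wulff-control}. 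The a.e. limit is the classical step in Aleksandrov's proof (Schneider, Lemma 7.5.3): at every $\theta$ for which $\varrho_K(\theta)\theta$ is a regular boundary point, one has $\dot\varrho(\theta)=f(\nu_K(\varrho_K\theta))\,\varrho_K(\theta)/h_K(\nu_K)$. Substituting this and changing variables from $\Sn$ to $\partial K$ via the radial map, whose Jacobian is $\varrho_K^{n-1}\,d\theta= (h_K(\nu_K)/\varrho_K)\,d\mathcal H^{n-1}$, gives \eqref{eq:weighted-aleksandrov}. With this route the partition argument becomes unnecessary, and only the continuity of $\psi$ near $\partial K$ is used.

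The main obstacle is the a.e. derivative of the radial function of the Wulff shape at regular points. We have $\varrho_t(\theta)=\min_u (h_K+tf)(u)/(\theta\cdot u)$ over $u$ with $\theta\cdot u>0$. The upper bound $\limsup(\varrho_t-\varrho_K)/t\le f(\nu)\varrho_K/h_K(\nu)$ for $t>0$ comes from testing $u=\nu$. The matching lower bound requires that near-minimizers $u_t$ converge to $\nu$, and this holds precisely because the normal at a regular point is unique. The argument is a compactness argument using continuity of $f$, and it is symmetric for $t<0$.
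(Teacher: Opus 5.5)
Your proof is correct, but it does not follow the paper's route.

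\textbf{The paper's route.} The paper does not derive the formula from scratch. It multiplies $\psi$ by a cutoff to obtain a compactly supported continuous $\widetilde\psi$ that equals $\psi$ on a neighborhood containing $K$ and every $K_t$ for small $|t|$. It then splits $\widetilde\psi$ into positive and negative parts, applies the Aleksandrov variational formula for measures with continuous density (Kryvonos--Langharst, Lemma~2.7) to each part, and subtracts.

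\textbf{Your route.} You prove that weighted formula directly:
\begin{itemize}
\item you use the radial representation of $\int_{K_t}\psi$;
\item you dominate the difference quotients by $|\varrho_t-\varrho_K|\le\varepsilon_t\varrho_K$ from Lemma~\ref{lem:wulff-control}, with $\varepsilon_t/|t|$ bounded;
\item you obtain the pointwise derivative $\dot\varrho=f(\nu_K)\varrho_K/h_K(\nu_K)$ at regular directions from the min-formula for the radial function of a Wulff shape, with the squeeze closed by uniqueness of the normal;
\item you change variables with the cone-volume identity $\varrho_K^{n}\,d\sigma=h_K(\nu_K)\,d\mathcal H^{n-1}$.
\end{itemize}
I checked the one-sided bounds for both signs of $t$ and the Jacobian, and they are right.

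\textbf{Comparison.} Your argument is self-contained and needs neither the cutoff nor the sign splitting. It also shows that only boundedness of $\psi$ near $K$ and continuity of $\psi$ at points of $\partial K$ enter. The paper's argument is shorter, but only because it delegates exactly this computation to the cited lemma and reshapes $\psi$ to fit that lemma's hypotheses.

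\textbf{Presentation.}
\begin{itemize}
\item Delete the cone-localized version and the partition-of-unity reduction in your first two paragraphs; your final argument never uses them. As written, the $\delta\to0$ step there would also need a $\limsup/\liminf$ argument, because the derivative is not known to exist for general $\psi$.
\item State explicitly that $\theta\mapsto\varrho_K(\theta)\theta$ is bi-Lipschitz because $0\in\operatorname{int}K$. Then the $\mathcal H^{n-1}$-null set of singular boundary points corresponds to a $\sigma$-null set of directions, which is what justifies the phrase ``for a.e.\ $\theta$''.
\end{itemize}
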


\begin{proof}
Choose open sets $U_1,U_2$ with
\[
 K\subset U_1,\qquad \overline{U_1}\subset U_2,
\]
such that $\psi$ is continuous on $U_2$.  By
\Cref{lem:wulff-control}, $K_t\subset U_1$ for all sufficiently small
$|t|$.  Let $\vartheta\in C_c^\infty(U_2)$ satisfy
$0\leq\vartheta\leq1$ and $\vartheta\equiv1$ on $U_1$.  Define
\[
 \widetilde\psi(x):=
 \begin{cases}
  \vartheta(x)\psi(x),&x\in U_2,\\
  0,&x\notin U_2.
 \end{cases}
\]
Then \(\widetilde\psi\) is continuous and compactly supported, and
\(\widetilde\psi=\psi\) on \(U_1\).

The positive and negative parts of \(\widetilde\psi\) are nonnegative
continuous densities.  Applying the arbitrary-measure Aleksandrov
variation formula \cite[Lemma~2.7]{KryvonosLangharst2023} to these two
densities and subtracting gives
\[
 \lim_{t\to0}
 \frac{\int_{K_t}\widetilde\psi\,dx-\int_K\widetilde\psi\,dx}{t}
 =
 \int_{\partial K}f(\nu_K(x))\widetilde\psi(x)\,
 d\mathcal H^{n-1}(x).
\]
For small $|t|$, \(\widetilde\psi=\psi\) on both $K$ and $K_t$; this is
\eqref{eq:weighted-aleksandrov}.
\end{proof}

\begin{theorem}
\label{thm:first-variation}
Let $K\in\mathcal K^n$ and $f\in C(\mathbb S^{n-1})$.  For the Wulff
perturbation \eqref{eq:wulff-perturbation},
\begin{equation}
 \left.\frac{d}{dt}\right|_{t=0}\mathcal J_\rho(K_t)
 =\int_{\mathbb S^{n-1}} f\,d\mathcal M_{\rho,K}.
 \label{eq:first-variation}
\end{equation}
In particular, this formula is valid for polytopes and for arbitrary
continuous Wulff directions.
\end{theorem}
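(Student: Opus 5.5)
We split the variation into the body part and the range part via the chain rule applied to the two-variable function $(t,r)\mapsto\mathcal E_r(K_t)$. Write $r_t:=r_{K_t}=\rho(|K_t|/\omega_n)^{1/n}$. By \Cref{lem:weighted-aleksandrov} with $\psi\equiv1$, $\frac{d}{dt}|_{t=0}|K_t|=\int_{\partial K}f(\nu_K)\,d\mathcal H^{n-1}=\int f\,dS_K$, so
\[
 \left.\frac{d}{dt}\right|_{t=0} r_t=\frac{r_K}{nV_K}\int_{\Sn}f\,dS_K .
\]
We then decompose
\[
 \mathcal J_\rho(K_t)-\mathcal J_\rho(K)
 =\bigl[\mathcal E_{r_t}(K_t)-\mathcal E_{r_t}(K)\bigr]
 +\bigl[\mathcal E_{r_t}(K)-\mathcal E_{r_K}(K)\bigr].
\]
By \eqref{eq:radius-derivative} and the continuity of $A_K$ (which follows from continuity of $g_K$), the second bracket divided by $t$ tends to $A_K(r_K)\frac{r_K}{nV_K}\int f\,dS_K=\beta_{\rho,K}\int f\,dS_K$. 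This is exactly the $\beta_{\rho,K}$ part of \eqref{eq:M-testing}.

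For the first bracket we use the symmetric structure. Write $\Phi_r(x,y)=\kai_{\{|x-y|<r\}}$. Then
\[
 \mathcal E_r(K_t)-\mathcal E_r(K)
 =2\int_{K_t\setminus K\,\text{or}\,K\setminus K_t}(\pm)\,q_{r,K}(x)\,dx
 +\iint (\kai_{K_t}-\kai_K)(x)(\kai_{K_t}-\kai_K)(y)\Phi_r\,dx\,dy .
\]
More cleanly, $\mathcal E_r(K_t)-\mathcal E_r(K)=2\bigl(\int_{K_t}q_{r,K}-\int_Kq_{r,K}\bigr)+\mathcal R$. The quadratic remainder satisfies $|\mathcal R|\le|K_t\triangle K|^2\le C^2t^2$ by \Cref{lem:wulff-control}, so it is $o(t)$. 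The linear term is evaluated at $r=r_t$, which moves with $t$. We control this by noting that $r\mapsto q_{r,K}$ is uniformly Lipschitz: $|q_{r,K}(x)-q_{s,K}(x)|\le|B_r\triangle B_s|\le C|r-s|$. Since $|r_t-r_K|=O(t)$ and $|K_t\triangle K|=O(t)$, we get
\[
 \Bigl|\int_{K_t}q_{r_t,K}-\int_Kq_{r_t,K}-\Bigl(\int_{K_t}q_{r_K,K}-\int_Kq_{r_K,K}\Bigr)\Bigr|\le C|r_t-r_K|\,|K_t\triangle K|=O(t^2).
\]
Next we apply \Cref{lem:weighted-aleksandrov} with $\psi=q_{r_K,K}$, which is continuous on all of $\R^n$. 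The first bracket over $t$ then tends to $2\int_{\partial K}f(\nu_K)q_{r_K,K}\,d\mathcal H^{n-1}$.

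Adding the two limits and comparing with \eqref{eq:M-testing} gives \eqref{eq:first-variation}. No regularity of $K$ enters, since everything rests on \Cref{lem:weighted-aleksandrov} and \Cref{lem:wulff-control}; hence the statement holds for polytopes and arbitrary continuous $f$.

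The main obstacle is the coupling between the moving body and the moving radius. It is handled by the $O(t^2)$ cross-term estimate, which combines the Lipschitz dependence of $q_{r,K}$ on $r$ with the symmetric-difference bound. A minor point is the translation normalization $0\in\operatorname{int}K$, which is needed for $K_t$ to be defined. It is harmless because $\mathcal J_\rho$ and $\mathcal M_{\rho,K}$ are translation invariant, and translating changes $h_K$ only by a linear function, so the perturbation remains a Wulff perturbation in the direction $f$.
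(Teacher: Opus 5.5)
Your proof is correct and follows the paper's argument. You split the variation into a fixed-radius body term (quadratic expansion with linear part $2\bigl(\int_{K_t}q_{r,K}-\int_K q_{r,K}\bigr)$, an $O(t^2)$ remainder, and the localized Aleksandrov formula with $\psi=q_{r_K,K}$) and a moving-radius term producing $A_K(r_K)\,r_K/(nV_K)=\beta_{\rho,K}$.

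The only difference is the order of the split. You move the radius on the fixed body $K$, which is a plain chain rule, and absorb the coupling into the body term via $|q_{r,K}-q_{s,K}|\le\omega_n|r^n-s^n|$. The paper instead fixes $r_K$ in the body term and controls the coupling through the uniform convergence $A_{K_t}\to A_K$, which follows from $\|g_{K_t}-g_K\|_\infty\le 2|K_t\triangle K|$.
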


\begin{proof}
Set \(r:=r_K\), \(r_t:=r_{K_t}\), and
\[
 \delta_t:=\kai_{K_t}-\kai_K.
\]
We separate the fixed-radius variation from the variation of the
adaptive radius.

\smallskip
\noindent\emph{Fixed radius.}
Expanding the quadratic form \eqref{eq:fixed-energy} gives
\[
 \mathcal E_r(K_t)-\mathcal E_r(K)
 =
 2\int_{\mathbb R^n}q_{r,K}(x)\delta_t(x)\,dx+R_t,
\]
where
\[
 |R_t|
 \leq\|\delta_t\|_{L^1(\mathbb R^n)}^2
 =O(t^2)
\]
by \Cref{lem:wulff-control}.  Since \(q_{r,K}\) is continuous, the
localized Aleksandrov formula yields
\[
 \int_{\mathbb R^n}q_{r,K}\delta_t\,dx
 =
 t\int_{\partial K}f(\nu_K(x))q_{r,K}(x)\,
   d\mathcal H^{n-1}(x)+o(t).
\]
Hence
\begin{equation}
 \left.\frac{d}{dt}\right|_{t=0}\mathcal E_{r_K}(K_t)
 =
 2\int_{\partial K}f(\nu_K(x))q_{r_K,K}(x)\,
   d\mathcal H^{n-1}(x).
 \label{eq:fixed-radius-first-variation}
\end{equation}

\smallskip
\noindent\emph{Moving radius.}
Applying \Cref{lem:weighted-aleksandrov} with \(\psi\equiv1\) and
differentiating \(r_t=\rho(|K_t|/\omega_n)^{1/n}\), we obtain
\begin{equation}
 r_t-r
 =
 \frac{r}{n|K|}\,t\int_{\mathbb S^{n-1}}f\,dS_K+o(t).
 \label{eq:radius-wulff-derivative}
\end{equation}
Choose a compact interval \(I\Subset(0,\infty)\) containing \(r\) in
its interior.  By \Cref{lem:wulff-control},
\[
 \sup_{z\in\mathbb R^n}|g_{K_t}(z)-g_K(z)|
 \leq2|K_t\triangle K|\longrightarrow0,
\]
and hence \(A_{K_t}\to A_K\) uniformly on \(I\).  Since
\[
 \mathcal E_s(K_t)=\int_0^sA_{K_t}(a)\,da,
\]
the preceding uniform convergence, the continuity of \(A_K\), and
\eqref{eq:radius-wulff-derivative} give
\begin{equation}
 \left.\frac{d}{dt}\right|_{t=0}
 \bigl[\mathcal E_{r_t}(K_t)-\mathcal E_r(K_t)\bigr]
 =
 \frac{r}{n|K|}A_K(r)
 \int_{\mathbb S^{n-1}}f\,dS_K.
 \label{eq:moving-radius-term}
\end{equation}

\smallskip
\noindent\emph{Conclusion.}
Adding \eqref{eq:fixed-radius-first-variation} and
\eqref{eq:moving-radius-term}, and using \eqref{eq:6} and
\eqref{eq:M-testing}, gives
\begin{align*}
 \left.\frac{d}{dt}\right|_{t=0}\mathcal J_\rho(K_t)
 &=
 \int_{\partial K}f(\nu_K(x))
 \bigl(2q_{r_K,K}(x)+\beta_{\rho,K}\bigr)\,
 d\mathcal H^{n-1}(x)\\
 &=\int_{\mathbb S^{n-1}}f\,d\mathcal M_{\rho,K},
\end{align*}
as required.
\end{proof}


\begin{corollary}
\label{cor:euler-centroid}
For $a>0$ and $z\in\mathbb R^n$,
\begin{equation}
 \mathcal M_{\rho,aK+z}=a^{2n-1}\mathcal M_{\rho,K}.
 \label{eq:M-homogeneity}
\end{equation}
Moreover,
\begin{equation}
 \int_{\mathbb S^{n-1}}u\,d\mathcal M_{\rho,K}(u)=0,
 \qquad
 \int_{\mathbb S^{n-1}}h_K\,d\mathcal M_{\rho,K}
 =2n\mathcal J_\rho(K).
 \label{eq:balance-euler}
\end{equation}
\end{corollary}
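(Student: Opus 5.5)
The plan is to read all three identities off the definition \eqref{E:8} and the first-variation formula \Cref{thm:first-variation}. Homogeneity and translation invariance will come directly from how each ingredient transforms; the two identities in \eqref{eq:balance-euler} will come from differentiating $\mathcal J_\rho$ along translations and dilations.

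\emph{Homogeneity.} Put $K'=aK+z$. First I record how each piece scales. We have $V_{K'}=a^nV_K$, so $R_{K'}=aR_K$ and $r_{K'}=ar_K$. The normal map satisfies $\nu_{K'}(ax+z)=\nu_K(x)$. For boundary points, $q_{r_{K'},K'}(ax+z)=|(aK+z)\cap B_{ar_K}(ax+z)|=a^nq_{r_K,K}(x)$. The covariogram scales as $g_{K'}(w)=a^ng_K(w/a)$. Hence
\[
\beta_{\rho,K'}=\frac{a^nr_K^n}{na^nV_K}\int_{\Sn}a^ng_K(r_Ku)\,d\sigma=a^n\beta_{\rho,K}.
\]
Finally, $\mathcal H^{n-1}\restriction_{\partial K'}$ is the pushforward of $a^{n-1}\mathcal H^{n-1}\restriction_{\partial K}$ under $x\mapsto ax+z$. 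Inserting all of this into \eqref{eq:M-testing} gives the factor $a^{n}\cdot a^{n-1}=a^{2n-1}$, which is \eqref{eq:M-homogeneity}.

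\emph{Zero centroid.} Fix $v\in\R^n$ and take $f(u)=u\cdot v$. After translating so that $0\in\operatorname{int}K$, we have $h_K+tf=h_{K+tv}$, so the Wulff shape is $K_t=K+tv$. Translation invariance gives $\mathcal J_\rho(K+tv)=\mathcal J_\rho(K)$, so the derivative vanishes. \Cref{thm:first-variation} then gives $\int u\cdot v\,d\mathcal M_{\rho,K}=0$ for every $v$.

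\emph{Euler identity.} Take $f=h_K$, again with $0\in\operatorname{int}K$. Then $K_t=[(1+t)h_K]=(1+t)K$, and \eqref{eq:intro-scaling} gives $\mathcal J_\rho(K_t)=(1+t)^{2n}\mathcal J_\rho(K)$. Differentiating at $t=0$ and applying \Cref{thm:first-variation} yields $\int h_K\,d\mathcal M_{\rho,K}=2n\mathcal J_\rho(K)$.

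It remains to remove the normalization $0\in\operatorname{int}K$. By homogeneity, $\mathcal M_{\rho,K}$ is translation invariant. By the centroid identity already proved, $\int h_{K+z}\,d\mathcal M=\int h_K\,d\mathcal M+z\cdot\int u\,d\mathcal M=\int h_K\,d\mathcal M$, so the Euler identity holds for every position of $K$.

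The only point I expect to need care is the interior-origin hypothesis under which the Wulff perturbation and \Cref{thm:first-variation} are stated, and it is handled by the translation step just described. Everything else is bookkeeping with scalings. In checking $\beta$ one should also note that $A_K(r)$ carries the factor $r^{n-1}$; this is consistent with the formula above, since the $r_K^n$ appearing there absorbs the extra factor $r_K$ from \eqref{eq:6}.
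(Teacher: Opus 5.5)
Your proposal is correct and follows the paper's proof: homogeneity comes from the scaling of $q_{r_K,K}$, of $\beta_{\rho,K}$ and of boundary area, and the two identities in \eqref{eq:balance-euler} come from Theorem~\ref{thm:first-variation} with $f(u)=u\cdot v$ (translations) and $f=h_K$ (dilations). Your explicit step removing the normalization $0\in\operatorname{int}K$, using translation invariance of $\mathcal M_{\rho,K}$ together with the centroid identity, is a detail the paper leaves implicit.
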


\begin{proof}
The scaling formula follows directly from \eqref{eq:3}-\eqref{eq:6}:
\[
 q_{ar,aK}(ax)=a^nq_{r,K}(x),
 \qquad
 \beta_{\rho,aK}=a^n\beta_{\rho,K},
\]
and boundary area scales by $a^{n-1}$.  Translation invariance is equally
immediate.  In
Theorem~\ref{thm:first-variation}, take $f(u)=z\cdot u$, then
$[h_K+t(z\cdot u)]=K+tz$, and translation invariance of
$\mathcal J_\rho$ gives the first identity in
\eqref{eq:balance-euler}.  Taking $f=h_K$ gives
$[h_K+th_K]=(1+t)K$, and the degree-$2n$ homogeneity in
\eqref{eq:intro-scaling} gives the second identity.
\end{proof}

\subsection{A lifted Brunn--Minkowski inequality}

The concavity needed below becomes transparent after lifting the
functional to $\mathbb R^{2n}$.  Define
\begin{equation}
 \mathcal C_\rho(K)
 :=\{(x,y)\in K\times K:|x-y|\leq r_K\}.
 \label{eq:lift-definition}
\end{equation}
This is a convex body in $\mathbb R^{2n}$.  It is full dimensional: if
$B_\varepsilon(x_0)\subset K$ and
$0<\delta<\min\{\varepsilon,r_K/2\}$, then
\[
 B_\delta(x_0)\times B_\delta(x_0)
 \subset\operatorname{int}\mathcal C_\rho(K).
\]
This product-space viewpoint is related to the higher-order covariogram
method.  There, generalized difference bodies encode the relative
displacements of several translates of $K$ as convex bodies in a product
space; see Schneider~\cite{Schneider1970} and the recent application to
higher-order affine P\'olya--Szeg\H{o} inequalities by Langharst, Roysdon,
and Zhao~\cite{LangharstRoysdonZhao2025}.  The present lift is different:
it retains the two endpoints of a chord and cuts $K\times K$ by the convex
distance constraint $|x-y|\leq r_K$.  Precisely this form converts the
adaptive interaction into the volume of a convex body, so that the
$2n$-dimensional Brunn--Minkowski inequality applies directly.
Moreover,
\begin{equation}
 |\mathcal C_\rho(K)|=\mathcal J_\rho(K).
 \label{eq:lift-volume}
\end{equation}
Indeed, the only difference between the two defining cutoffs is the set
$\{(x,y):|x-y|=r_K\}$.  For each fixed $x$, its $y$-section is a
Euclidean sphere and has $n$-dimensional Lebesgue measure zero, Fubini's
theorem therefore shows that the whole exceptional set has
$2n$-dimensional measure zero.

\begin{theorem}[Adaptive Brunn--Minkowski inequality]
\label{thm:adaptive-BM}
For $K,L\in\mathcal K^n$ and $0\leq\lambda\leq1$,
\begin{equation}
\mathcal J_\rho((1-\lambda)K+\lambda L)^{1/(2n)}
 \geq(1-\lambda)\mathcal J_\rho(K)^{1/(2n)}
      +\lambda\mathcal J_\rho(L)^{1/(2n)}.
 \label{eq:adaptive-BM}
\end{equation}
If $0<\lambda<1$, equality holds if and only if $K$ and $L$ are
homothetic.
\end{theorem}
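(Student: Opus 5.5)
The plan is to prove \eqref{eq:adaptive-BM} by showing that the lifted bodies $\mathcal C_\rho$ from \eqref{eq:lift-definition} behave well under Minkowski combination, and then to apply the classical Brunn--Minkowski inequality in $\mathbb R^{2n}$ together with \eqref{eq:lift-volume}. Write $K_\lambda:=(1-\lambda)K+\lambda L$. The cases $\lambda\in\{0,1\}$ are trivial, so assume $0<\lambda<1$.

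\emph{Step 1 (the adaptive radius is concave).} The classical Brunn--Minkowski inequality in $\mathbb R^n$ gives $|K_\lambda|^{1/n}\ge(1-\lambda)|K|^{1/n}+\lambda|L|^{1/n}$. Since $r_K=\rho\,\omega_n^{-1/n}|K|^{1/n}$ is a fixed multiple of $|K|^{1/n}$, this is exactly
\[
 r_{K_\lambda}\ge(1-\lambda)r_K+\lambda r_L .
\]

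\emph{Step 2 (inclusion of lifts).} We claim
\[
 (1-\lambda)\mathcal C_\rho(K)+\lambda\,\mathcal C_\rho(L)\subset\mathcal C_\rho(K_\lambda).
\]
Take $(x,y)\in\mathcal C_\rho(K)$ and $(x',y')\in\mathcal C_\rho(L)$. Then both $(1-\lambda)x+\lambda x'$ and $(1-\lambda)y+\lambda y'$ lie in $K_\lambda$. By the triangle inequality and Step 1, their distance is at most
\[
 (1-\lambda)|x-y|+\lambda|x'-y'|\le(1-\lambda)r_K+\lambda r_L\le r_{K_\lambda}.
\]
This is the only place where the adaptive radius enters, and Step 1 is exactly what makes the distance constraint compatible with Minkowski addition.

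\emph{Step 3 (conclusion of the inequality).} Using \eqref{eq:lift-volume}, the monotonicity of volume, and the $2n$-dimensional Brunn--Minkowski inequality for the full-dimensional convex bodies $\mathcal C_\rho(K)$ and $\mathcal C_\rho(L)$, we get
\[
 \mathcal J_\rho(K_\lambda)^{1/(2n)}\ge\bigl|(1-\lambda)\mathcal C_\rho(K)+\lambda\mathcal C_\rho(L)\bigr|^{1/(2n)}\ge(1-\lambda)\mathcal J_\rho(K)^{1/(2n)}+\lambda\mathcal J_\rho(L)^{1/(2n)} .
\]

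\emph{Step 4 (equality cases).} For sufficiency, suppose $L=aK+z$ with $a>0$. Then $K_\lambda=((1-\lambda)+\lambda a)K+\lambda z$. The degree-$2n$ homogeneity \eqref{eq:intro-scaling} and translation invariance give
\[
 \mathcal J_\rho(K_\lambda)^{1/(2n)}=((1-\lambda)+\lambda a)\,\mathcal J_\rho(K)^{1/(2n)},\qquad \mathcal J_\rho(L)^{1/(2n)}=a\,\mathcal J_\rho(K)^{1/(2n)},
\]
so equality holds.

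For necessity, equality in \eqref{eq:adaptive-BM} forces equality in the second inequality of Step 3. The equality case of the $2n$-dimensional Brunn--Minkowski inequality for convex bodies then yields $\mathcal C_\rho(L)=a\,\mathcal C_\rho(K)+w$ for some $a>0$ and $w=(w_1,w_2)\in\mathbb R^{2n}$. We recover $K$ from its lift by projecting onto the first factor: the projection of $\mathcal C_\rho(K)$ is contained in $K$, and for every $x\in K$ the diagonal point $(x,x)$ lies in $\mathcal C_\rho(K)$, so the projection is exactly $K$. Projecting the relation $\mathcal C_\rho(L)=a\,\mathcal C_\rho(K)+w$ therefore gives $L=aK+w_1$, so $K$ and $L$ are homothetic.

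The main point to get right is Step 2, namely checking that the moving radius does not break the inclusion; Step 1 settles this. The equality step is routine once one observes that the lift determines the body via this projection.
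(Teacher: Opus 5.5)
Your proposal is correct and follows essentially the same route as the paper. Both arguments obtain concavity of $r_K$ from the $n$-dimensional Brunn--Minkowski inequality, deduce the inclusion $(1-\lambda)\mathcal C_\rho(K)+\lambda\mathcal C_\rho(L)\subset\mathcal C_\rho(K_\lambda)$ via the triangle inequality, apply the $2n$-dimensional Brunn--Minkowski inequality to the lifts, and settle the equality case by projecting the homothety of the lifts onto the first factor, using the diagonal points $(x,x)$.
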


\begin{proof}
Let $K_\lambda=(1-\lambda)K+\lambda L$.  The classical
Brunn--Minkowski inequality \cite{G02,Schneider2014} in $\mathbb R^n$ gives
\begin{equation}
 r_{K_\lambda}\geq(1-\lambda)r_K+\lambda r_L.
 \label{eq:radius-BM}
\end{equation}
If $(x_0,y_0)\in\mathcal C_\rho(K)$ and
$(x_1,y_1)\in\mathcal C_\rho(L)$, then
\begin{equation*}
    |x_0-y_0|\leq r_K,~~~~~~~~~~|x_1-y_1|\leq r_L.
\end{equation*}
And if we define
\begin{equation*}
    x_{\lambda}=(1-\lambda)x_0+\lambda x_1,~~~~~~~~~y_{\lambda}=(1-\lambda)y_0+\lambda y_1,
\end{equation*}
then it is obvious that $x_{\lambda}, y_{\lambda}\in K_{\lambda}$, and 
\begin{equation*}
 |x_{\lambda}-y_{\lambda}|=\big|(1-\lambda)x_0+\lambda x_1-((1-\lambda)y_0+\lambda y_1)\big|\leq(1-\lambda)r_K+\lambda r_L
 \leq r_{K_\lambda},
\end{equation*}
which yields
\begin{equation*}
    (x_{\lambda}, y_{\lambda})\in \mathcal{C}_{\rho}(K_{\lambda}).
\end{equation*}
Therefore
\begin{equation}
 (1-\lambda)\mathcal C_\rho(K)
 +\lambda\mathcal C_\rho(L)
 \subset\mathcal C_\rho(K_\lambda).
 \label{eq:lift-inclusion}
\end{equation}
By monotonicity of volume,
\begin{equation*}
    |\mathcal{C}_{\rho}(K_{\lambda})|\geq|(1-\lambda)\mathcal{C}_{\rho}(K)+\lambda \mathcal{C}_{\rho}(L)|,
\end{equation*}
followed by the
Brunn--Minkowski inequality in $\mathbb R^{2n}$, gives 
\begin{align}
 \mathcal J_\rho(K_\lambda)^{1/(2n)}
 &=|\mathcal C_\rho(K_\lambda)|^{1/(2n)} \notag\\
 &\geq
 \big|(1-\lambda)\mathcal C_\rho(K)
       +\lambda\mathcal C_\rho(L)\big|^{1/(2n)} \notag\\
 &\geq(1-\lambda)|\mathcal C_\rho(K)|^{1/(2n)}
       +\lambda|\mathcal C_\rho(L)|^{1/(2n)} \notag\\
 &=(1-\lambda)\mathcal J_\rho(K)^{1/(2n)}
       +\lambda\mathcal J_\rho(L)^{1/(2n)}.
 \label{eq:lift-BM-complete-chain}
\end{align}
This proves \eqref{eq:adaptive-BM}.

Suppose now that $0<\lambda<1$ and equality holds in
\eqref{eq:adaptive-BM}, then in \eqref{eq:lift-BM-complete-chain}, both inequalities are
equalities. In particular, equality holds in the classical
$2n$-dimensional Brunn--Minkowski inequality.  The lifted sets are
full-dimensional convex bodies by the observation preceding the theorem,
so its equality characterization applies: the convex bodies
$\mathcal C_\rho(K)$ and $\mathcal C_\rho(L)$ are homothetic: for some
$a>0$ and $(z_1,z_2)\in\mathbb R^n\times\mathbb R^n$,
\begin{equation}
 \mathcal C_\rho(L)
 =a\mathcal C_\rho(K)+(z_1,z_2).
 \label{eq:lift-homothety}
\end{equation}
Both coordinate projections of $\mathcal C_\rho(K)$ are equal to $K$:
the definition gives that each projection is contained in $K$, while
$(x,x)\in\mathcal C_\rho(K)$ for every $x\in K$ gives the reverse
inclusion.  Projecting
\eqref{eq:lift-homothety} onto the first coordinate gives
$L=aK+z_1$.  Thus $K$ and $L$ are homothetic.  Conversely, if
$L=aK+z$, then
\[
 (1-\lambda)K+\lambda L
 =\bigl((1-\lambda)+\lambda a\bigr)K+\lambda z.
\]
Translation invariance and degree-$2n$ homogeneity of $\mathcal J_\rho$
therefore show that the two sides of \eqref{eq:adaptive-BM} are both
\(\bigl((1-\lambda)+\lambda a\bigr)
\mathcal J_\rho(K)^{1/(2n)}\).  This proves the converse and completes the
equality statement.
\end{proof}

Define the first mixed adaptive energy by
\begin{equation}
 \mathcal J_\rho(K,L)
 :=\frac1{2n}\int_{\mathbb S^{n-1}}h_L
       \,d\mathcal M_{\rho,K}.
 \label{eq:mixed-adaptive-energy}
\end{equation}
It is independent of the choice of translation of $L$, by
\eqref{eq:balance-euler}.

\begin{theorem}[First Minkowski inequality]
\label{thm:first-Minkowski}
For all $K,L\in\mathcal K^n$,
\begin{equation}
\mathcal J_\rho(K,L)
 \geq
 \mathcal J_\rho(K)^{(2n-1)/(2n)}
 \mathcal J_\rho(L)^{1/(2n)}.
 \label{eq:first-Minkowski}
\end{equation}
Equality holds if and only if $K$ and $L$ are homothetic.
\end{theorem}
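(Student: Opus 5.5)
We derive \eqref{eq:first-Minkowski} by differentiating the adaptive Brunn--Minkowski inequality of \Cref{thm:adaptive-BM} at $\lambda=0$, identifying the derivative with the help of \Cref{thm:first-variation}. By translation invariance of $\mathcal J_\rho$ and the translation independence of $\mathcal J_\rho(K,L)$ noted after \eqref{eq:mixed-adaptive-energy}, we may assume $0\in\operatorname{int}K$. For $0\le\lambda\le1$ put $K_\lambda=(1-\lambda)K+\lambda L$, so that
\[
 h_{K_\lambda}=h_K+\lambda(h_L-h_K)
 \quad\text{and}\quad
 K_\lambda=[h_K+\lambda f],\qquad f:=h_L-h_K\in C(\Sn).
\]
Thus $K_\lambda$ is exactly the Wulff perturbation \eqref{eq:wulff-perturbation} in direction $f$. \Cref{thm:first-variation} and \eqref{eq:balance-euler} then give the right derivative
\[
 \frac{d}{d\lambda}\Big|_{\lambda=0^+}\mathcal J_\rho(K_\lambda)
 =\int_{\Sn}(h_L-h_K)\,d\mathcal M_{\rho,K}
 =2n\mathcal J_\rho(K,L)-2n\mathcal J_\rho(K).
\]
The theorem is stated for a two-sided perturbation; its one-sided restriction is all we need.

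Let $\phi(\lambda)=\mathcal J_\rho(K_\lambda)^{1/(2n)}$. By \eqref{eq:adaptive-BM}, $\phi(\lambda)\ge(1-\lambda)\phi(0)+\lambda\phi(1)$, so
\[
 \frac{\phi(\lambda)-\phi(0)}{\lambda}\ge\phi(1)-\phi(0).
\]
Letting $\lambda\downarrow0$ and using the chain rule, which is valid because $\mathcal J_\rho(K)>0$ for a convex body, we get
\[
 \frac1{2n}\mathcal J_\rho(K)^{1/(2n)-1}\bigl(2n\mathcal J_\rho(K,L)-2n\mathcal J_\rho(K)\bigr)
 \ge \mathcal J_\rho(L)^{1/(2n)}-\mathcal J_\rho(K)^{1/(2n)}.
\]
Multiplying by $\mathcal J_\rho(K)^{(2n-1)/(2n)}$ yields $\mathcal J_\rho(K,L)-\mathcal J_\rho(K)\ge\mathcal J_\rho(K)^{(2n-1)/(2n)}\mathcal J_\rho(L)^{1/(2n)}-\mathcal J_\rho(K)$, which is \eqref{eq:first-Minkowski}.

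For the equality case, suppose first that $L=aK+z$. Then $h_L=ah_K+z\cdot u$. Using \eqref{eq:balance-euler} and $\mathcal J_\rho(L)=a^{2n}\mathcal J_\rho(K)$, both sides of \eqref{eq:first-Minkowski} equal $a\mathcal J_\rho(K)$.

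Conversely, the \textbf{main obstacle} is that equality in a derivative inequality does not by itself give equality in \eqref{eq:adaptive-BM}. The plan is to show $\phi$ is concave and then use concavity. Indeed, $K_{\lambda}$ for $\lambda\in[0,1]$ is a Minkowski-linear family: any convex combination of $K_{\lambda_1}$ and $K_{\lambda_2}$ is $K_{\lambda_3}$ for the corresponding $\lambda_3$. Applying \Cref{thm:adaptive-BM} to pairs $K_{\lambda_1},K_{\lambda_2}$ therefore gives concavity of $\phi$ on $[0,1]$. For a concave function,
\[
 \phi'(0^+)\ge\frac{\phi(\lambda)-\phi(0)}{\lambda}\ge\phi(1)-\phi(0)\qquad(0<\lambda\le1).
\]
Equality in \eqref{eq:first-Minkowski} means $\phi'(0^+)=\phi(1)-\phi(0)$, which forces $\phi(\lambda)=(1-\lambda)\phi(0)+\lambda\phi(1)$ for every $\lambda\in(0,1)$. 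So equality holds in \eqref{eq:adaptive-BM} at, say, $\lambda=1/2$, and the equality clause of \Cref{thm:adaptive-BM} shows that $K$ and $L$ are homothetic.

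The remaining points to check are the following. $L$ may be any convex body, and $h_L-h_K$ is continuous, so \Cref{thm:first-variation} applies. The family $K_\lambda$ coincides with the Wulff shapes $[h_K+\lambda f]$ because $h_K+\lambda f$ is a support function for $\lambda\in[0,1]$. Finally, $\mathcal J_\rho(L)>0$ because $\mathcal C_\rho(L)$ is full dimensional.
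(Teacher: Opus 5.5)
Your proof is correct and follows essentially the same route as the paper: you differentiate the adaptive Brunn--Minkowski inequality at the endpoint via \Cref{thm:first-variation}, and for the equality case you use concavity of $\mathcal J_\rho^{1/(2n)}$ along a Minkowski-linear family to force equality in \Cref{thm:adaptive-BM}. The only difference is cosmetic: the paper uses the family $K+tL$ (direction $h_L$, with homogeneity giving $F(t)\ge F(0)+t\mathcal J_\rho(L)^{1/(2n)}$), whereas your family $(1-\lambda)K+\lambda L$ (direction $h_L-h_K$) additionally needs the Euler identity \eqref{eq:balance-euler} to identify the derivative.
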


\begin{proof}
Define
\[
 F(t):=\mathcal J_\rho(K+tL)^{1/2n},\qquad t\geq0.
\]
Apply Theorem~\ref{thm:adaptive-BM} with weights
$1/(1+t)$ and $t/(1+t)$ to $K$ and $L$.  Since
\[
 \frac{K+tL}{1+t}=\frac1{1+t}K+\frac{t}{1+t}L,
\]
degree-$2n$ homogeneity gives
\begin{equation}
 F(t)\geq F(0)+t\mathcal J_\rho(L)^{1/2n}.
 \label{eq:superadditive-bound}
\end{equation}
Since $K+tL=[h_K+th_L]$, Theorem~\ref{thm:first-variation} gives
\[
 F'(0^{+})=\frac{1}{2n}\mathcal J_\rho(K)^{1/2n-1}
             \int_{\mathbb S^{n-1}}h_L\,d\mathcal M_{\rho,K}.
\]
Subtracting $F(0)$ in \eqref{eq:superadditive-bound}, dividing by $t>0$,
and letting $t\downarrow0$ yields
\[
 \frac{1}{2n}\mathcal J_\rho(K)^{1/2n-1}
       \int_{\mathbb S^{n-1}}h_L\,d\mathcal M_{\rho,K}
 \geq \mathcal J_\rho(L)^{1/2n}.
\]
Multiplication by $\mathcal J_\rho(K)^{1-1/2n}$ and the definition
\eqref{eq:mixed-adaptive-energy} prove \eqref{eq:first-Minkowski}.

If $K$ and $L$ are homothetic, equality follows from
\eqref{eq:balance-euler} and homogeneity.  Conversely, suppose equality
holds in \eqref{eq:first-Minkowski}.  The function $F$ is concave on
$[0,\infty)$: applying Theorem~\ref{thm:adaptive-BM} to
$K+sL$ and $K+tL$ gives the concavity inequality at every convex
combination of $s,t\geq0$.  Hence
\[
 G(t):=F(t)-F(0)-t\mathcal J_\rho(L)^{1/2n}
\]
is concave, $G(0)=0$, and equality in
\eqref{eq:first-Minkowski} says $G'(0^{+})=0$. For a concave function, there holds for every $t\geq0$,
\[
 G(t)\leq G(0)+tG'(0^{+})=0.
\]
On the other hand, \eqref{eq:superadditive-bound} shows that $G(t)\geq0$.
Thus $G(t)=0$ for every $t\geq0$.  Taking $t=1$ shows
\[
 \mathcal J_\rho(K+L)^{1/2n}
 =\mathcal J_\rho(K)^{1/2n}+\mathcal J_\rho(L)^{1/2n}.
\]
After dividing $K+L$ by $2$ and using homogeneity, this is equality in
Theorem~\ref{thm:adaptive-BM} with $\lambda=1/2$.  Its equality
characterization shows that $K$ and $L$ are homothetic.
\end{proof}

\subsection{The measure-valued Minkowski problem}

We now characterize the finite Borel measures on \(\mathbb S^{n-1}\)
that arise as adaptive covariogram measures.  The equation to be solved is
\begin{equation}
 \mathcal M_{\rho,K}=\mu
 \label{eq:Minkowski-equation}
\end{equation}
for an unknown body $K\in\mathcal K^n$.

\begin{theorem}[Adaptive covariogram Minkowski theorem]
\label{thm:Minkowski-problem}
Let $\mu$ be a nonzero finite positive Borel measure on
$\mathbb S^{n-1}$.  There exists $K\in\mathcal K^n$ satisfying
\eqref{eq:Minkowski-equation} if and only if
\begin{align}
 &\int_{\mathbb S^{n-1}}u\,d\mu(u)=0,
 \label{eq:mu-centroid}\\
 &\mu\text{ is not concentrated on a great subsphere}.
 \label{eq:mu-no-hemisphere}
\end{align}
The solution is unique up to translation.
\end{theorem}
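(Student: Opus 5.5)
Necessity comes first. If $\mathcal M_{\rho,K}=\mu$, the centroid condition \eqref{eq:mu-centroid} is the first identity of \eqref{eq:balance-euler}. For \eqref{eq:mu-no-hemisphere}, note that $2q_{r_K,K}+\beta_{\rho,K}$ is bounded below on $\partial K$ by a positive constant: $q_{r_K,K}(x)>0$ for every $x\in K$, since $K$ has interior and $r_K>0$, and $q_{r_K,K}$ is continuous on the compact set $\partial K$. Hence $\mathcal M_{\rho,K}$ and $S_K$ are mutually absolutely continuous, and they have the same support. It is classical that $S_K$ is not concentrated on a great subsphere, so neither is $\mu$.

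Uniqueness will use the first Minkowski inequality, \Cref{thm:first-Minkowski}. Suppose $\mathcal M_{\rho,K}=\mathcal M_{\rho,L}=\mu$. Then by \eqref{eq:balance-euler},
\[
 \mathcal J_\rho(K,L)=\tfrac1{2n}\int h_L\,d\mathcal M_{\rho,L}=\mathcal J_\rho(L),
 \qquad
 \mathcal J_\rho(L,K)=\mathcal J_\rho(K).
\]
\Cref{thm:first-Minkowski} then gives $\mathcal J_\rho(L)\ge\mathcal J_\rho(K)^{(2n-1)/2n}\mathcal J_\rho(L)^{1/2n}$, so $\mathcal J_\rho(L)\ge \mathcal J_\rho(K)$, and by symmetry the two are equal. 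Equality therefore holds in \eqref{eq:first-Minkowski}, so $L=aK+z$. By \eqref{eq:M-homogeneity}, $a^{2n-1}\mu=\mu$, hence $a=1$.

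For existence I would use the standard variational method. Consider
\[
 \Phi(K)=\frac{1}{2n}\int h_K\,d\mu
\]
minimized over bodies with $\mathcal J_\rho(K)=1$. By \eqref{eq:mu-centroid}, $\Phi$ is translation invariant, so we may center bodies, for example at the centroid. Condition \eqref{eq:mu-no-hemisphere} gives $c_0:=\min_{v\in\Sn}\int (u\cdot v)_+\,d\mu>0$. Hence if $K$ contains a segment of length $\ell$ through the origin in direction $v$, then $\int h_K\,d\mu\ge c\ell$, so a minimizing sequence is bounded. Blaschke selection yields a Hausdorff limit, and \Cref{lem:adaptive-Hausdorff-continuity} shows the constraint passes to the limit. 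The limit cannot be degenerate, since $\mathcal J_\rho(K)\le|K|^2$ and $|K|=0$ would force $\mathcal J_\rho=0\ne1$. Continuity of $\Phi$ under Hausdorff convergence then makes the limit $K_0$ a minimizer.

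The step I expect to be the main obstacle is the Lagrange condition, because the minimization is over support functions while perturbations $h_{K_0}+tf$ are not support functions. The fix is to pass to Wulff shapes: for $f\in C(\Sn)$ set $K_t=[h_{K_0}+tf]$. Since $h_{K_t}\le h_{K_0}+tf$ and $\mu\ge0$, the function
\[
 \psi(t)=\frac{\int(h_{K_0}+tf)\,d\mu}{\mathcal J_\rho(K_t)^{1/2n}}
\]
dominates the scale-invariant ratio $\Phi(K_t)/\mathcal J_\rho(K_t)^{1/2n}$, which is minimized at $t=0$, with equality at $t=0$. So $\psi$ has a minimum at $0$. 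By \Cref{thm:first-variation}, $\psi'(0)=0$, which gives
\[
 \int f\,d\mu=\lambda\int f\,d\mathcal M_{\rho,K_0},\qquad \lambda=\frac{\int h_{K_0}\,d\mu}{2n}\quad\text{(using $\mathcal J_\rho(K_0)=1$)}
\]
for all $f$, so $\mu=\lambda\mathcal M_{\rho,K_0}$ with $\lambda>0$. Finally, \eqref{eq:M-homogeneity} lets us rescale: $K=\lambda^{1/(2n-1)}K_0$ satisfies $\mathcal M_{\rho,K}=\mu$.
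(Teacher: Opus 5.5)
Your proof is correct and follows the paper's route. Necessity and uniqueness are argued the same way. For existence, minimizing $\Phi$ under $\mathcal J_\rho=1$ is simply the dual normalization of the paper's maximization of $\mathcal J_\rho$ under $\Phi_\mu=1$, and your domination $h_{[h_{K_0}+tf]}\le h_{K_0}+tf$ is exactly what justifies the paper's passage \eqref{snw} to Wulff-shape perturbations.

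One small slip: $c_0>0$ needs the centroid condition \eqref{eq:mu-centroid} as well as \eqref{eq:mu-no-hemisphere}. A measure supported in an open hemisphere satisfies \eqref{eq:mu-no-hemisphere} but has $c_0=0$. Both hypotheses are available in the sufficiency direction, so nothing breaks.
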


\begin{proof}
We use the variational method.  
\smallskip
\noindent\emph{Step 1: Necessity.}
If $\mu=\mathcal M_{\rho,K}$, condition \eqref{eq:mu-centroid} follows
from \eqref{eq:balance-euler}.  Moreover,
$q_{r_K,K}(x)>0$ at every $x\in\partial K$.  To see this, choose an
interior ball $B_a(y)\subset K$.  For sufficiently small $s>0$, let
\[
E_s=x+s(B_a(y)-x)=(1-s)x+sB_a(y).
\]
Since $x\in K$ and $B_a(y)\subset K$, convexity gives $E_s\subset K$.
Moreover, for $z\in B_a(y)$,
\begin{equation*}
|(1-s)x+sz-x|=s|z-x|<r_K.
\end{equation*}
Thus $E_s\subset K\cap B_{r_K}(x)$.  Hence the
weight $w_K=2q_{r_K,K}+\beta_{\rho,K}$ in \eqref{E:8} is
strictly positive at every boundary point.  For a Borel set
$A\subset\mathbb S^{n-1}$, the definitions give
\[
 S_K(A)=\mathcal H^{n-1}\bigl(\nu_K^{-1}(A)\bigr),
 \qquad
 \mathcal M_{\rho,K}(A)
 =\int_{\nu_K^{-1}(A)}w_K(x)\,d\mathcal H^{n-1}(x).
\]
Because $w_K>0$, either quantity vanishes exactly when
$\mathcal H^{n-1}(\nu_K^{-1}(A))=0$.  Thus
$\mathcal M_{\rho,K}$ and $S_K$ have the same null sets on the sphere.
The surface area measure of a convex
body is not concentrated on a great subsphere. Thus
\eqref{eq:mu-no-hemisphere} is necessary.

\smallskip
\noindent\emph{Step 2: Existence of a maximizer.}
Assume \eqref{eq:mu-centroid} and \eqref{eq:mu-no-hemisphere}.  Define
\begin{equation}
 \Phi_\mu(K):=\int_{\mathbb S^{n-1}}h_K\,d\mu.
 \label{eq:Phi-definition}
\end{equation}
Condition \eqref{eq:mu-centroid} makes $\Phi_\mu$ translation
invariant.  It is strictly positive on every convex body: after translating
a given body $K$ so that $0\in\operatorname{int}K$, one has
$h_K\geq\varepsilon$ on $\mathbb S^{n-1}$ for some $\varepsilon>0$, and
therefore
\begin{equation}
 \Phi_\mu(K)\geq\varepsilon\mu(\mathbb S^{n-1})>0.
 \label{eq:Phi-strictly-positive}
\end{equation}
Consider the variational problem
\begin{equation*}
    \sup\bigl\{\mathcal J_\rho(L):L\in\mathcal K^n,\ \Phi_\mu(L)=1\bigr\}.
\end{equation*}
Set
\begin{equation}
 c_\mu:=\min_{e\in\mathbb S^{n-1}}
 \int_{\mathbb S^{n-1}}(e\cdot u)_+\,d\mu(u).
 \label{eq:c-mu}
\end{equation}
We claim that $c_\mu>0$.  If the integral vanished for some $e$, then
$\mu$ would be supported in $\{u:e\cdot u\leq0\}$.  Taking the scalar
product of \eqref{eq:mu-centroid} with $e$ would then force
$e\cdot u=0$ for $\mu$-almost every $u$, contradicting
\eqref{eq:mu-no-hemisphere}.  Thus the integral in \eqref{eq:c-mu} is
positive for every $e$.  The integrand $(e\cdot u)_+$ is Lipschitz in $e$
with Lipschitz constant at most one, uniformly in $u$, hence the integral
depends continuously on $e$.  A positive continuous function on the compact
sphere has a positive minimum, and therefore $c_\mu>0$.

For every admissible $K$, translate it so that $0\in K$.  If
$R=\max_{x\in K}|x|$ and $Re\in K$, then
\[
 h_K(u)\geq\max\{0,R(e\cdot u)\}=R(e\cdot u)_+.
\]
Consequently
\begin{equation}
 1=\Phi_\mu(K)\geq R\int_{\mathbb{S}^{n-1}} (e\cdot u)_{+}\,d\mu(u)
 \geq c_\mu R,
 \label{eq:circumradius-bound}
\end{equation}
i.e. $R\leq c_{\mu}^{-1}$,
so the translated admissible bodies lie in the fixed ball
$c_\mu^{-1}B_1$.  Starting with a maximizing sequence, Blaschke's selection
theorem therefore gives
\begin{equation*}
    K_j\rightarrow K_0
\end{equation*}
in the Hausdorff sense.  Equivalently, their support functions converge
uniformly:
\begin{equation*}
    \|h_{K_j}-h_{K_0}\|_{\infty}\rightarrow0.
\end{equation*}
Since $\mu$ is finite, we obtain
\begin{equation*}
    |\Phi_{\mu}(K_j)-\Phi_{\mu}(K_0)|\leq \mu(\mathbb{S}^{n-1})\|h_{K_j}-h_{K_0}\|_{\infty}\rightarrow 0
\end{equation*}
and
$\Phi_\mu(K_0)=1$.

Owing to Lemma~\ref{lem:adaptive-Hausdorff-continuity}, 
\begin{equation*}
    \mathcal{J}_{\rho}(K_j)\rightarrow\mathcal{J}_{\rho}(K_0)
\end{equation*}
Thus $K_0$ attains the supremum over the
normalized class.  To see explicitly that it has not collapsed to a
lower-dimensional convex set, let
\[
 B_\mu:=\mu(\mathbb S^{n-1})^{-1}B_1.
\]
This is an admissible full-dimensional body and
$\mathcal J_\rho(B_\mu)>0$, because the integrand in
\eqref{eq:adaptive-energy} equals 1 on a set of positive
$2n$-dimensional measure.  Hence maximality gives
\[
 \mathcal J_\rho(K_0)\geq\mathcal J_\rho(B_\mu)>0.
\]
If $K_0$ were lower dimensional, then $|K_0|=0$ and the elementary bound
$0\leq\mathcal J_\rho(K_0)\leq|K_0|^2$ would give
$\mathcal J_\rho(K_0)=0$, a contradiction.  Therefore $K_0$ has nonempty
interior and belongs to $\mathcal K^n$.

\smallskip
\noindent\emph{Step 3: The Euler--Lagrange equation for arbitrary continuous
directions.} For any $g\in C(\mathbb S^{n-1})$, whenever its Wulff shape $[g]$ is a convex body and $\Phi_\mu(g)>0$, consider the scale-invariant quotient 
\[ \mathcal F_\mu(g) := \frac{\mathcal J_\rho([g])}{\Phi_\mu(g)^{2n}}.  
\] 
According to the $2n$-homogeneity of $\mathcal J_\rho$, we first observe that the variational problem of Step~2 is equivalently written as 
\begin{equation}\label{snw}
\sup_{\substack{ g\in C(\mathbb S^{n-1})\\ [g]\in\mathcal K^n,\ \Phi_\mu(g)>0 }} \frac{\mathcal J_\rho([g])}{\Phi_\mu(g)^{2n}} = \sup\bigl\{\mathcal J_\rho(L):L\in\mathcal K^n,\ \Phi_\mu(L)=1\bigr\}. 
\end{equation}

Since $K_0$ is a maximizer of the normalized problem, \eqref{snw} shows that its support function $h_{K_0}$ is a maximizer of $\mathcal F_\mu$. Let now $g\in C(\mathbb S^{n-1})$ be arbitrary and put \[ g_t:=h_{K_0}+tg, \qquad K_t:=[g_t], \]  by Theorem \ref{thm:first-variation},

\[ \left. \frac{d}{dt} \right|_{t=0} \frac{\mathcal J_\rho(K_t)}{\Phi_\mu(K_t)^{2n}}=0, \]
we obtain \[ \int_{\mathbb S^{n-1}}g\,dM_{\rho,K_0} - 2n\frac{J_\rho(K_0)}{\Phi_\mu(K_0)} \int_{\mathbb S^{n-1}}g\,d\mu=0. 
\] Since $g\in C(\mathbb S^{n-1})$ is arbitrary, it yields \[ M_{\rho,K_0} = 2n\frac{\mathcal J_\rho(K_0)}{\Phi_\mu(K_0)}\,\mu.   \]  Set \[ \lambda := \left(2n\frac{\mathcal J_\rho(K_0)}{\Phi_\mu(K_0)}\right)^{-1/(2n-1)},  \] by the $(2n-1)$-homogeneity of the adaptive covariogram measure, we obtain \[ K:=\lambda K_0 \] is a solution of the Minkowski problem  \eqref{eq:Minkowski-equation}. This proves sufficiency.

\smallskip
\noindent\emph{Step 4: Uniqueness.}
Suppose $\mathcal M_{\rho,K}=\mathcal M_{\rho,L}=\mu$.  The first
Minkowski inequality and the Euler identity give the following result.
Both energies in it are positive, since each convex body contains an
interior ball and its interaction radius is positive:
\begin{align*}
 \mathcal J_\rho(L)
 &=\frac1{2n}\int_{\mathbb S^{n-1}}h_L\,d\mu\\
 &=\mathcal J_\rho(K,L)\\
 &\geq
 \mathcal J_\rho(K)^{(2n-1)/(2n)}
 \mathcal J_\rho(L)^{1/(2n)}.
\end{align*}
Dividing by the positive
$\mathcal J_\rho(L)^{1/(2n)}$ gives
$\mathcal J_\rho(L)\geq\mathcal J_\rho(K)$.  Reversing $K$ and
$L$ gives $\mathcal J_\rho(K)\geq\mathcal J_\rho(L)$, so their energies
are equal.  The displayed first Minkowski inequality is consequently an
equality.  Its equality characterization gives $L=aK+z$ for some $a>0$ and
$z\in\mathbb R^n$.  By \eqref{eq:M-homogeneity},
\[
 \mu=\mathcal M_{\rho,L}=a^{2n-1}\mathcal M_{\rho,K}
 =a^{2n-1}\mu.
\]
Since $\mu\neq0$, $a=1$, and $L$ is a translate of $K$.
\end{proof}
 
\subsection{Continuity of the normalized solution map}

Continuity of normalized solutions under weak convergence of the prescribed
measures is a standard compactness--uniqueness principle in Minkowski-type
problems.  For the classical problem, see Schneider~\cite[Theorems~8.5.1
and~8.5.3]{Schneider2014} and the bounded-Lipschitz formulation of Hug and
Schneider~\cite{HugSchneider2015}; for the $L_p$-Minkowski problem, see
Zhu~\cite{Zhu2017}.  We record the corresponding consequence for the
adaptive measure \(\mathcal M_{\rho,K}\).  It is not a formal consequence
of the cited results, since both its nonlocal weight and its interaction
radius depend on the unknown body.  After translations are fixed by the
Steiner point, the variational construction above yields the required
continuity.  For a finite
positive Borel measure $\mu$ on $\Sph$ satisfying
$\int_{\Sph}u\,d\mu(u)=0$, put
\begin{equation}
 c(\mu):=\min_{e\in\Sph}
        \int_{\Sph}(e\cdot u)_+\,d\mu(u).
 \label{eq:measure-nondegeneracy}
\end{equation}
Under the centroid condition, it is easy to show
that $c(\mu)>0$ if and only if \eqref{eq:mu-no-hemisphere} holds;
equivalently, if and only if $\mu$ is not concentrated on a great
subsphere.

We denote the Steiner point of a convex body by $s(K)$.  Whenever
$c(\mu)>0$, let $K_\mu$ be the unique solution of
\begin{equation}
 \cM_{\rho,K_\mu}=\mu,
 \qquad s(K_\mu)=0.
 \label{eq:normalized-Minkowski-solution}
\end{equation}

\begin{proposition}[Continuity of the normalized solution map]
\label{thm:inverse-map-continuity}
Let $\mu_j$ and $\mu$ be nonzero finite positive Borel measures on
$\Sph$ such that
\[
 \int_{\Sph}u\,d\mu_j(u)=\int_{\Sph}u\,d\mu(u)=0,
 \qquad c(\mu)>0.
\]
If $\mu_j\rightharpoonup\mu$ weakly, then, for all sufficiently large
$j$, $c(\mu_j)>0$ and
\begin{equation}
 K_{\mu_j}\longrightarrow K_\mu
 \quad\text{in Hausdorff distance}.
 \label{eq:inverse-map-Hausdorff-convergence}
\end{equation}
\end{proposition}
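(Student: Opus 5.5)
The plan is the usual compactness--uniqueness argument: show that the normalized solutions $K_{\mu_j}$ are uniformly bounded and uniformly nondegenerate, extract Hausdorff-convergent subsequences, prove that every subsequential limit solves $\mathcal M_{\rho,K}=\mu$, and conclude from uniqueness in Theorem~\ref{thm:Minkowski-problem}.

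\textbf{Step 1: nondegeneracy of the data.} First I will show that $c(\mu_j)\to c(\mu)$. The functions $e\mapsto\int(e\cdot u)_+\,d\mu_j$ are $\mu_j(\Sph)$-Lipschitz in $e$, and $\mu_j(\Sph)\to\mu(\Sph)$ is bounded. Weak convergence gives pointwise convergence in $e$, so equicontinuity upgrades this to uniform convergence on $\Sph$. Hence $c(\mu_j)\ge c(\mu)/2>0$ for large $j$, and $K_{\mu_j}$ is defined by Theorem~\ref{thm:Minkowski-problem}.

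\textbf{Step 2: a priori bounds.} Write $K_j=K_{\mu_j}$. By \eqref{eq:balance-euler},
\[
\Phi_{\mu_j}(K_j)=\int h_{K_j}\,d\mu_j=2n\mathcal J_\rho(K_j)\le 2n|K_j|^2 .
\]
\emph{Upper bound on the size.} Translate $K_j$ so that $0\in K_j$ and let $R_j$ be its circumradius about $0$. The argument of \eqref{eq:circumradius-bound} gives $\Phi_{\mu_j}(K_j)\ge c(\mu_j)R_j$. I combine this with $|K_j|\le\omega_nR_j^n$ and with the Steiner-point normalization, which controls the translation by the diameter. The resulting inequality $c R_j\le 2n\,\omega_n^2R_j^{2n}$ gives a lower bound on $R_j$, not an upper bound, so the size must be controlled differently.

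I will use the variational characterization instead. Since $\mathcal M_{\rho,K_j}=\mu_j$, the body $K_j$ is, up to scaling, the maximizer of $\mathcal J_\rho/\Phi_{\mu_j}^{2n}$: the first Minkowski inequality (Theorem~\ref{thm:first-Minkowski}) with \eqref{eq:balance-euler} shows that any $L$ satisfies
\[
\mathcal J_\rho(L)^{1/(2n)}\le \Phi_{\mu_j}(L)\big/\bigl(2n\,\mathcal J_\rho(K_j)^{(2n-1)/(2n)}\bigr).
\]
Testing with $L=B_1$ gives $\mathcal J_\rho(K_j)^{(2n-1)/(2n)}\le \mu_j(\Sph)/(2n\,\mathcal J_\rho(B_1)^{1/(2n)})$, an upper bound on $\mathcal J_\rho(K_j)$. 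Hence $\Phi_{\mu_j}(K_j)=2n\mathcal J_\rho(K_j)$ is bounded above, and then $R_j\le\Phi_{\mu_j}(K_j)/c(\mu_j)$ is bounded.

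\emph{Lower bound on the energy.} Take $L=K_j$ in the same inequality to get $\mathcal J_\rho(K_j)\ge\mathcal J_\rho(L)\,(2n)^{2n}\mathcal J_\rho(K_j)^{2n-1}/\Phi_{\mu_j}(L)^{2n}$ for every $L$. This alone is not a lower bound. Instead I use the normalized maximizer: on the class $\Phi_{\mu_j}=1$, its energy is at least $\mathcal J_\rho(B_{\mu_j})=\mathcal J_\rho(B_1)/\mu_j(\Sph)^{2n}$, which is bounded below. Since $K_j$ is a rescaling of that maximizer, with the scale determined by the homogeneity identities, $\mathcal J_\rho(K_j)$ is bounded below as well. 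Consequently $|K_j|\ge\mathcal J_\rho(K_j)^{1/2}$ is bounded below.

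\textbf{Step 3: passage to the limit.} By Steiner-point normalization and Blaschke selection, a subsequence converges to a compact convex set $K$ with $s(K)=0$, by continuity of the Steiner point. By Step 2, $|K|>0$. The main obstacle is weak continuity of $K\mapsto\mathcal M_{\rho,K}$, together with the joint limit $\mathcal M_{\rho,K_j}=\mu_j\rightharpoonup\mu$.

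To get it, I test against $g\in C(\Sph)$ using the first variation. This is cleaner as a variational limit: $\mathcal J_\rho$ is Hausdorff continuous (Lemma~\ref{lem:adaptive-Hausdorff-continuity}) and $\Phi_{\mu_j}(L_j)\to\Phi_\mu(L)$ whenever $L_j\to L$. The maximality of $K_j$ for $\mathcal J_\rho/\Phi_{\mu_j}^{2n}$ therefore passes to the limit, so $K$ maximizes $\mathcal J_\rho/\Phi_\mu^{2n}$. Compare each competitor $L$ with $L$ itself in the $j$-problems, and use $\Phi_{\mu_j}(K_j)\to\Phi_\mu(K)$, which follows from uniform convergence of support functions and bounded masses. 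Step 3 of the proof of Theorem~\ref{thm:Minkowski-problem} then gives $\mathcal M_{\rho,K}=\lambda\mu$ for some $\lambda>0$. The Euler identity gives $\lambda\Phi_\mu(K)=2n\mathcal J_\rho(K)=\lim 2n\mathcal J_\rho(K_j)=\lim\Phi_{\mu_j}(K_j)=\Phi_\mu(K)$, so $\lambda=1$. Uniqueness up to translation, together with $s(K)=0$, forces $K=K_\mu$. Since every subsequence has a further subsequence with this limit, the whole sequence converges.
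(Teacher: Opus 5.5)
\textbf{The gap.} Your Step~2 lower bound on the energy runs in the wrong direction. Let $t_j:=\Phi_{\mu_j}(K_j)$ and $m_j:=\sup\{\mathcal J_\rho(L):\Phi_{\mu_j}(L)=1\}$. By your own maximality observation, $t_j^{-1}K_j$ is a normalized maximizer, so $m_j=\mathcal J_\rho(K_j)/t_j^{2n}$. The Euler identity gives $t_j=2n\mathcal J_\rho(K_j)$. Together these give
\[
 \mathcal J_\rho(K_j)=\bigl((2n)^{2n}m_j\bigr)^{-1/(2n-1)} .
\]
This is \emph{decreasing} in $m_j$. So the ball-competitor bound $m_j\geq\mathcal J_\rho(B_1)/\mu_j(\Sph)^{2n}$ yields only an \emph{upper} bound on $\mathcal J_\rho(K_j)$, namely the one you already derived from the first Minkowski inequality with $L=B_1$. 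Nothing in your argument rules out $m_j\to\infty$, i.e.\ $K_j$ shrinking to the origin. Hence ``By Step 2, $|K|>0$'' is unsupported. Without it Step~3 breaks: passing the maximality of $\mathcal J_\rho/\Phi_{\mu_j}^{2n}$ to the limit needs $\Phi_\mu(K)>0$, and both the Euler--Lagrange argument and the normalization $\lambda=1$ need $K\in\mathcal K^n$.

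\textbf{The repair, and comparison with the paper.} You need an \emph{upper} bound on $m_j$, and it is already implicit in what you wrote. By the centroid condition,
\[
 \Phi_{\mu_j}(L)=\int_{\Sph}(h_L(u)-x\cdot u)\,d\mu_j\geq\int_{\Sph}((y-x)\cdot u)_+\,d\mu_j\geq c(\mu_j)|x-y|
\]
for all $x,y\in L$; this is the diameter estimate the paper uses. Hence admissible bodies have diameter at most $c_0^{-1}$, and $m_j\leq\omega_n^2c_0^{-2n}$. Alternatively, the chain you discarded,
\[
 c_0R_j\leq\Phi_{\mu_j}(K_j)=2n\mathcal J_\rho(K_j)\leq2n|K_j|^2\leq2n\omega_n^2R_j^{2n},
\]
is precisely the missing nondegeneracy estimate. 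It bounds $R_j$ from below, and then $|K_j|^2\geq\mathcal J_\rho(K_j)\geq c_0R_j/(2n)$ is bounded from below.

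With this fixed, the rest of your argument is sound, and it takes a mildly different route from the paper's. The paper proves that the normalized maximizers $\widehat K_{\mu_j}$ converge, identifying the limit through uniqueness of the normalized maximizer (from the strict adaptive Brunn--Minkowski inequality), and rescales only at the end. You instead work with $K_{\mu_j}$ directly: you certify maximality via the first Minkowski inequality, fix the scale of the limit with the Euler identity, and conclude by uniqueness in Theorem~\ref{thm:Minkowski-problem}. The paper's normalization makes the ball competitor give the energy bound in the useful direction automatically. That is exactly where your transfer to $K_{\mu_j}$ reversed it.
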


\begin{proof}
We use the normalized variational construction from the existence proof.
For a measure $\nu$ with $c(\nu)>0$, set
\[
 \Phi_\nu(L):=\int_{\Sph}h_L\,d\nu
\]
and let $\widehat K_\nu$, normalized by $s(\widehat K_\nu)=0$, maximize
$\cJ_\rho$ subject to $\Phi_\nu(L)=1$.  The Euler-Lagrange equation and
$2n$-homogeneity give
\begin{equation}
 \cM_{\rho,\widehat K_\nu}
 =2n\cJ_\rho(\widehat K_\nu)\nu,
 \qquad
 K_\nu
 =\bigl(2n\cJ_\rho(\widehat K_\nu)\bigr)^{-1/(2n-1)}
   \widehat K_\nu.
 \label{eq:stability-solution-rescaling}
\end{equation}

Weak convergence and a finite-net argument in $e\in\Sph$ imply
\begin{equation}
 c(\mu_j)\longrightarrow c(\mu)>0.
 \label{eq:c-mu-continuity}
\end{equation}
Indeed, if
$F_j(e)=\int_{\Sph}(e\cdot u)_+\,d\mu_j(u)$ and
$F(e)=\int_{\Sph}(e\cdot u)_+\,d\mu(u)$, then weak convergence gives
$F_j(e)\to F(e)$ for each fixed $e$.  Moreover,
\[
 |F_j(e)-F_j(e')|\leq\mu_j(\Sph)|e-e'|,
\]
and the masses $\mu_j(\Sph)$ converge to $\mu(\Sph)$.  A finite net of
the sphere therefore gives $\|F_j-F\|_\infty\to0$, and taking minima
proves \eqref{eq:c-mu-continuity}.  We consequently have
$c(\mu_j)\geq c_0>0$ for all large $j$.

For any $x,y\in L$, the centroid condition gives
\begin{align*}
 \Phi_{\mu_j}(L)
 &=\int_{\Sph}(h_L(u)-x\cdot u)\,d\mu_j(u)\\
 &\geq\int_{\Sph}((y-x)\cdot u)_+\,d\mu_j(u)\\
 &\geq c_0|x-y|.
 \label{eq:diameter-from-Phi}
\end{align*}
Thus every normalized maximizer has uniformly bounded diameter.  Since
its Steiner point is the origin, it lies in a fixed ball.

The centered ball of radius $1/\mu_j(\Sph)$ is an admissible competitor.
The masses $\mu_j(\Sph)$ stay bounded above and below, and consequently
\begin{equation}
 \cJ_\rho(\widehat K_{\mu_j})\geq c_1>0.
 \label{eq:positive-energy-competitor}
\end{equation}
By Blaschke's selection theorem, every subsequence has a further
subsequence converging in Hausdorff distance to a compact convex set
$L$.  Estimate \eqref{eq:positive-energy-competitor} and the elementary
bound $\cJ_\rho(A)\leq|A|^2$ exclude a lower-dimensional limit.

Let $h_j=h_{\widehat K_{\mu_j}}$.  Hausdorff convergence gives
$\|h_j-h_L\|_\infty\to0$, while the masses $\mu_j(\Sph)$ remain bounded.
The joint weak/uniform limit in the constraint is therefore explicit:
\begin{align*}
 |\Phi_{\mu_j}(\widehat K_{\mu_j})-\Phi_\mu(L)|
 &\leq
 \left|\int_{\Sph}(h_j-h_L)\,d\mu_j\right|
+\left|\int_{\Sph}h_L\,d(\mu_j-\mu)\right|\\
 &\leq \mu_j(\Sph)\|h_j-h_L\|_\infty +\left|\int_{\Sph}h_L\,d(\mu_j-\mu)\right|\\
 &\longrightarrow0.
\end{align*}
Since $\Phi_{\mu_j}(\widehat K_{\mu_j})=1$, and by
the continuity  Lemma~\ref{lem:adaptive-Hausdorff-continuity}, we obtain
\[
 \Phi_\mu(L)=1,
 \qquad
 \cJ_\rho(\widehat K_{\mu_j})\longrightarrow\cJ_\rho(L).
\]
To see that $L$ is a maximizer, take any convex body $A$ with
$\Phi_\mu(A)=1$.  Weak convergence, now with the fixed continuous
function $h_A$, gives $t_j:=\Phi_{\mu_j}(A)\to1$.  Hence
$A_j=t_j^{-1}A\to A$ in Hausdorff distance and
$\cJ_\rho(A_j)\to\cJ_\rho(A)$.  Since $A_j$ is a competitor for the
$j$-th problem,
\[
 \cJ_\rho(\widehat K_{\mu_j})\geq\cJ_\rho(A_j).
\]
Passing to the limit yields $\cJ_\rho(L)\geq\cJ_\rho(A)$.  The strict
adaptive Brunn--Minkowski inequality proved earlier shows that the
normalized maximizer is unique up to translation.  Here are the details.
If $L_1,L_2$ are two normalized maximizers, then
$\Phi_\mu((L_1+L_2)/2)=1$.  Maximality and the adaptive
Brunn--Minkowski inequality give
\[
 \max\cJ_\rho^{1/(2n)}
 \geq \cJ_\rho((L_1+L_2)/2)^{1/(2n)}
 \geq \frac12\cJ_\rho(L_1)^{1/(2n)}
      +\frac12\cJ_\rho(L_2)^{1/(2n)}
 =\max\cJ_\rho^{1/(2n)}.
\]
Thus equality holds, so $L_2=aL_1+z$ for some $a>0$ and $z\in\R^n$.
Translation invariance of $\Phi_\mu$ and the two normalizations imply
$1=\Phi_\mu(L_2)=a\Phi_\mu(L_1)=a$, hence $a=1$.  The Steiner
normalization then gives $z=0$.  Therefore $L=\widehat K_\mu$.  Since
every subsequence has the same limit,
\[
 \widehat K_{\mu_j}\longrightarrow\widehat K_\mu.
\]
Finally, the energies in \eqref{eq:stability-solution-rescaling} converge, so the
rescaling factors converge as well.  This proves
\eqref{eq:inverse-map-Hausdorff-convergence}.
\end{proof}

The preceding theorem yields a uniform form of this continuity on
quantitatively nondegenerate classes.  Let $d_{\rm BL}$ denote the bounded
Lipschitz distance between finite measures and define
\begin{equation}
 \mathscr A(m,M,\tau)
 :=\left\{\mu:
 \begin{array}{c}
  m\leq\mu(\Sph)\leq M,\quad \displaystyle\int_{\Sph}u\,d\mu(u)=0,\\[2mm]
  c(\mu)\geq\tau
 \end{array}\right\}.
 \label{eq:nondegenerate-measure-class}
\end{equation}

\begin{corollary}[A compactness modulus on nondegenerate classes]
\label{cor:inverse-map-uniform-continuity}
The preceding proposition has the following formal consequence.  For every
$m,M,\tau>0$ there is a nondecreasing modulus
$\omega_{n,\rho,m,M,\tau}$, with
$\omega_{n,\rho,m,M,\tau}(t)\to0$ as $t\downarrow0$, such that
\begin{equation}
 d_H(K_\mu,K_\nu)
 \leq\omega_{n,\rho,m,M,\tau}
       \bigl(d_{\rm BL}(\mu,\nu)\bigr)
 \label{eq:inverse-map-modulus}
\end{equation}
for all $\mu,\nu\in\mathscr A(m,M,\tau)$.
\end{corollary}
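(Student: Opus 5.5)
The plan is a compactness argument by contradiction, built on Proposition~\ref{thm:inverse-map-continuity}. Define the modulus directly by
\[
 \omega(t):=\sup\bigl\{d_H(K_\mu,K_\nu):\mu,\nu\in\mathscr A(m,M,\tau),\ d_{\rm BL}(\mu,\nu)\leq t\bigr\}.
\]
This function is nondecreasing by construction. It is also finite, because the proof of Proposition~\ref{thm:inverse-map-continuity} gives uniform bounds on the class. Indeed, every $\widehat K_\mu$ has diameter at most $\tau^{-1}$ and Steiner point at the origin. The scaling factor $(2n\mathcal J_\rho(\widehat K_\mu))^{-1/(2n-1)}$ is bounded above, since the ball of radius $1/\mu(\Sph)\geq 1/M$ is a competitor and so $\mathcal J_\rho(\widehat K_\mu)$ is bounded below. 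It is bounded below as well, since $\mathcal J_\rho\leq|\cdot|^2$ and the diameter bound control the energy from above. Hence all $K_\mu$ with $\mu\in\mathscr A(m,M,\tau)$ lie in a fixed ball, and $\omega\leq$ twice its radius.

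The only thing left to prove is $\omega(t)\to0$ as $t\downarrow0$. Suppose this fails. Then there are $\varepsilon>0$ and pairs $\mu_j,\nu_j\in\mathscr A(m,M,\tau)$ with $d_{\rm BL}(\mu_j,\nu_j)\to0$ but $d_H(K_{\mu_j},K_{\nu_j})\geq\varepsilon$. The masses are bounded by $M$, so by Prokhorov (weak-$*$ compactness of bounded measures on the compact sphere) we may pass to a subsequence with $\mu_j\rightharpoonup\mu$. On bounded-mass sets of measures on a compact metric space, $d_{\rm BL}$ metrizes weak convergence, so $\nu_j\rightharpoonup\mu$ too.

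The key step is to check that the limit $\mu$ is again in $\mathscr A(m,M,\tau)$, so that Proposition~\ref{thm:inverse-map-continuity} can be applied to it.
\begin{itemize}
\item Total mass: testing weak convergence against the constant $1$ gives $m\leq\mu(\Sph)\leq M$, so $\mu$ is nonzero.
\item Centroid: testing against the coordinate functions $u\mapsto u_i$ gives $\int u\,d\mu=0$.
\item Nondegeneracy: the finite-net argument in the proof of Proposition~\ref{thm:inverse-map-continuity} shows $c(\mu_j)\to c(\mu)$, hence $c(\mu)\geq\tau>0$.
\end{itemize}
I expect this closedness check to be the main point needing care, though it is routine: it is what makes the class compact.

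Now apply Proposition~\ref{thm:inverse-map-continuity} to both sequences $\mu_j\rightharpoonup\mu$ and $\nu_j\rightharpoonup\mu$. It gives $K_{\mu_j}\to K_\mu$ and $K_{\nu_j}\to K_\mu$ in Hausdorff distance. By the triangle inequality $d_H(K_{\mu_j},K_{\nu_j})\to0$, which contradicts the choice $d_H\geq\varepsilon$. Therefore $\omega(t)\to0$, and \eqref{eq:inverse-map-modulus} holds by the definition of $\omega$.
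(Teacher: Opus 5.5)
Your proposal is correct and follows the paper's own argument. The paper also takes the modulus to be the supremum of $d_H(K_\mu,K_\nu)$ over pairs with $d_{\rm BL}(\mu,\nu)\le t$, and shows it tends to zero by compactness and contradiction from Proposition~\ref{thm:inverse-map-continuity}, phrased as uniform continuity of a continuous map on the weakly compact class $\mathscr A(m,M,\tau)$. Your explicit checks fill in details the paper leaves implicit: that mass, centroid and $c(\mu)\geq\tau$ pass to the weak limit, and that $\omega$ is finite by the uniform diameter and energy bounds.
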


\begin{proof}
If $\mathscr A(m,M,\tau)$ is empty, the assertion is vacuous.  Otherwise,
the class $\mathscr A(m,M,\tau)$ is compact for weak convergence, which
is metrized there by $d_{\rm BL}$.  The map
$\mu\mapsto K_\mu$ is continuous by
\Cref{thm:inverse-map-continuity}; hence it is uniformly continuous on
this compact class.  Equivalently, one may define the modulus as the
supremum of $d_H(K_\mu,K_\nu)$ over pairs in the class with
$d_{\rm BL}(\mu,\nu)\leq t$.  A contradiction argument using compactness
and \Cref{thm:inverse-map-continuity} shows that this supremum tends to
zero with $t$.
\end{proof}

\begin{remark}
\Cref{cor:inverse-map-uniform-continuity} is a compactness modulus; no
explicit H\"older exponent is asserted.  Obtaining an effective modulus
would require a quantitative uniqueness estimate for the variational
Minkowski problem, not merely the strict equality characterization.
\end{remark}

\section{Rearrangement and variational rigidity}
\label{sec:rigidity}
This section distinguishes three rigidity mechanisms for the adaptive
short-chord functional.  On a fixed-volume class the range is fixed, and
Riesz rearrangement gives global ball rigidity for \(0<\rho\leq2\).
For \(0<\rho<2\), stationarity under ambient deformations yields the
pointwise uniform-density equation and hence the ball.  Wulff
stationarity gives instead the weaker measure identity
\(\mathcal M_{\rho,K}=cS_K\), which becomes pointwise under strict
convexity.  The final counterexample shows why this hypothesis cannot be
discarded.

\subsection{Rearrangement consequences}

For a measurable set \(E\subset\mathbb R^n\) with
\(0<|E|<\infty\), put
\[
 R_E:=\left(\frac{|E|}{\omega_n}\right)^{1/n},
 \qquad r_E:=\rho R_E,
\]
and define \(\mathcal J_\rho(E)\) by the same formula as for convex
bodies.  We write
\[
 \operatorname{diam}_{\rm ess}E
 :=\operatorname*{ess\,sup}_{(x,y)\in E\times E}|x-y|.
\]

\begin{theorem}[Rearrangement rigidity]
\label{thm:riesz-rigidity}
Let \(E\subset\mathbb R^n\) be measurable with
\(0<|E|<\infty\), and let \(B_E\) be a ball satisfying
\(|B_E|=|E|\).
\begin{enumerate}[label=\textup{(\roman*)}]
\item If \(0<\rho\leq2\), then
\begin{equation}\label{E:11}
 \mathcal J_\rho(E)\leq\mathcal J_\rho(B_E),
\end{equation}
and equality holds if and only if \(E\) agrees almost everywhere with a
ball.
\item If \(\rho>2\), then 
\begin{equation}
 \mathcal J_\rho(E)\leq |E|^2=\mathcal J_\rho(B_E),
 \label{eq:rearrangement-supercritical-bound}
\end{equation}
and equality holds if and only if
\(\operatorname{diam}_{\rm ess}E\leq\rho R_E\). In particular, the maximizer is not unique.
\end{enumerate}
\end{theorem}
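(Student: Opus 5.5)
Since $|E|=|B_E|$, the range $r:=r_E=\rho R_E$ is the same for $E$ and $B_E$, so $\mathcal J_\rho(E)=\iint\kai_E(x)\kai_E(y)\kai_{B_r}(x-y)\,dx\,dy$ is a Riesz--Sobolev trilinear form with three symmetric decreasing-rearrangeable sets. I will take $B_E$ centred at the origin; its radius is $R_E$.

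For part (i), I will invoke the Riesz rearrangement inequality with $f=g=\kai_E$ and $h=\kai_{B_r}$. Since $h$ already equals its own rearrangement and $\kai_E^*=\kai_{B_E}$, this gives \eqref{E:11} immediately.

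For equality when $0<\rho<2$, I will use Burchard's characterization \cite{Burchard1996}. Its strict-admissibility (triangle) condition on the radii $(R_E,R_E,r)$ reads $r<2R_E$, which is exactly $\rho<2$, and the other two inequalities $R_E<R_E+r$ are trivial. Burchard's theorem then says equality forces $E$ to be a.e.\ a ball, centred at some $a$, and the kernel ball to be centred at $a-a=0$, which is consistent.

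The endpoint $\rho=2$ is the main obstacle, because Burchard's condition becomes degenerate there. In this case $r=2R_E$, and the ball gives $\mathcal J_\rho(B_E)=|E|^2$, since every pair in $B_E$ has distance at most $2R_E$ and pairs at distance exactly $2R_E$ form a null set. So equality means $|x-y|<2R_E$ for a.e.\ $(x,y)\in E\times E$, that is, $\operatorname{diam}_{\rm ess}E\le 2R_E$. To get rid of the null set, I will replace $E$ by its set of Lebesgue density points $E'$. This changes $E$ only on a null set, and $\operatorname{diam}E'\le\operatorname{diam}_{\rm ess}E$: if $x,y\in E'$, then small balls around $x$ and $y$ meet $E$ in positive measure, so some pairs of $E$ near $(x,y)$ realize distances close to $|x-y|$. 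The isodiametric inequality then gives $|E'|\le\omega_n(\operatorname{diam}E'/2)^n\le\omega_nR_E^n=|E|$. Equality holds throughout, and the equality case of the isodiametric inequality (closure of $E'$ is a ball of diameter $2R_E$) shows that $E$ is a.e.\ a ball. Conversely, if $E$ is a.e.\ a ball, equality holds by translation invariance.

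For part (ii), suppose $\rho>2$. The bound $\mathcal J_\rho(E)\le|E|^2$ is trivial, and for the ball, $\operatorname{diam}B_E=2R_E<\rho R_E$, so $\mathcal J_\rho(B_E)=|E|^2$. Equality $\mathcal J_\rho(E)=|E|^2$ holds iff $\kai_{\{|x-y|<r\}}=1$ a.e.\ on $E\times E$, i.e.\ iff $|x-y|<r$ a.e.; up to the null sphere set already handled in \eqref{eq:lift-volume}, this is $\operatorname{diam}_{\rm ess}E\le\rho R_E$. For non-uniqueness, I will take a convex body of volume $|E|$ that is not a ball but whose diameter lies strictly between $2R_E$ and $\rho R_E$, for instance a slightly flattened ellipsoid or a cube close enough to the ball. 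Such a body exists because the diameter of a small perturbation of a ball tends to $2R_E<\rho R_E$.
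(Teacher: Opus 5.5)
Your proposal is correct and follows essentially the same route as the paper: Riesz rearrangement for the inequality, Burchard's equality theorem for $0<\rho<2$ (Burchard's theorem yields homothetic ellipsoids, and you should state explicitly that the common ellipsoid must be a ball because the kernel set is the ball $B_{r_E}$), the density-point and isodiametric argument at $\rho=2$, and the trivial bound $|E|^2$ together with a slightly eccentric ellipsoid of the same volume for $\rho>2$. The only slip is the aside about ``a cube close enough to the ball'': a cube is not a small perturbation of $B_E$, and it is a maximizer only when $\rho\ge\sqrt n\,\omega_n^{1/n}$, but your ellipsoid example already gives non-uniqueness.
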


\begin{proof}
On a fixed-volume class, $r_E=\rho R_E$ is fixed.  The Riesz
rearrangement inequality \cite{BLL74} therefore gives \eqref{E:11}.
For $0<\rho<2$, Burchard's equality theorem
\cite[Theorem~1]{Burchard1996} applies because the three rearrangement
radii \((R_E,\rho R_E,R_E)\) are strictly admissible.  Its ellipsoid
must be a ball, since the middle rearrangement is the Euclidean ball
\(B_{r_E}\); hence equality holds only for a ball, up to null sets.

For $\rho=2$, equality implies
\(\operatorname{diam}_{\rm ess}E\leq2R_E\).  Applying the density-point
argument to \(E^{(1)}\), and then the isodiametric inequality to
\(\operatorname{conv}(\overline{E^{(1)}})\), shows that this convex hull
is a ball of radius $R_E$ and that $E$ agrees with it almost everywhere.
This also proves the equality assertion at the endpoint.

If $\rho>2$, the trivial bound \(\mathcal J_\rho(E)\leq|E|^2\) is
attained by $B_E$.  It is an equality exactly when
\(|x-y|<\rho R_E\) for almost every pair in $E\times E$, equivalently
when \(\operatorname{diam}_{\rm ess}E\leq\rho R_E\); the equality of
the strict and non-strict formulations follows because each distance
sphere has zero Lebesgue measure.  Finally, a sufficiently small
volume-preserving non-spherical ellipsoidal perturbation of $B_E$ has
diameter below $\rho R_E$, proving nonuniqueness.
\end{proof}

\subsection{Rigidity under Wulff stationarity}
\label{subsec:wulff-stationary-rigidity}

The Euler equation associated with Wulff variations is naturally an
identity between measures on the sphere. We first explain how this
identity follows from stationarity.

Let $f\in C(\mathbb S^{n-1})$, and consider the Wulff perturbation
\[
 K_t=[h_K+tf].
\]
The first-variation formulas for the adaptive energy and the volume
are
\begin{align}
 \left.\frac d{dt}\right|_{t=0}\mathcal J_\rho(K_t)
 &=
 \int_{\mathbb S^{n-1}}f\,d\mathcal M_{\rho,K},
 \label{eq:wulff-energy-variation-rigidity}\\
 \left.\frac d{dt}\right|_{t=0}|K_t|
 &=
 \int_{\mathbb S^{n-1}}f\,dS_K.
 \label{eq:wulff-volume-variation-rigidity}
\end{align}
Suppose that $\mathcal J_\rho$ is stationary at $K$ under the volume
constraint with respect to all Wulff perturbations. Then
\begin{equation}
 \int_{\mathbb S^{n-1}}f\,dS_K=0
 \quad\Longrightarrow\quad
 \int_{\mathbb S^{n-1}}f\,d\mathcal M_{\rho,K}=0.
 \label{eq:wulff-constrained-stationarity}
\end{equation}
Define the linear functionals
\[
 L(f)=\int_{\mathbb S^{n-1}}f\,d\mathcal M_{\rho,K},
 \qquad
 G(f)=\int_{\mathbb S^{n-1}}f\,dS_K.
\]
Since
\[
 G(1)=S_K(\mathbb S^{n-1})>0,
\]
the functional $G$ is not identically zero. Choose \(f_0\in C(\mathbb S^{n-1})\) such that
\[
G(f_0)=1.
\]
For arbitrary \(f\in C(\mathbb S^{n-1})\), set
\[
g=f-G(f)f_0.
\]
Then
\[
G(g)=G(f)-G(f)G(f_0)=0,
\]
and hence \(g\in\ker G\subset\ker L\). Therefore
\[
0=L(g)=L(f)-G(f)L(f_0).
\]
It follows that
\[
L(f)=\lambda_KG(f)
\qquad\text{for every }f\in C(\mathbb S^{n-1}),
\]
where
\[
\lambda_K:=L(f_0).
\]
Consequently,
\[
\int_{\mathbb S^{n-1}}f\,d\mathcal M_{\rho,K}
=
\lambda_K\int_{\mathbb S^{n-1}}f\,dS_K
\]
for every \(f\in C(\mathbb S^{n-1})\). By the uniqueness of finite
Borel measures,
\begin{equation}
\mathcal M_{\rho,K}=\lambda_KS_K.
 \label{eq:wulff-stationarity}
\end{equation}
Taking $f=h_K$, the homogeneity gives
\begin{equation}
L(h_K)=2n\mathcal J_\rho(K),
\qquad
G(h_K)=n|K|,
\label{eq:wulff-homogeneity-identities}
\end{equation}
and therefore
\begin{equation}
\lambda_K=\frac{2\mathcal J_\rho(K)}{|K|}>0.
\label{eq:wulff-lagrange-multiplier}
\end{equation}

\begin{corollary}[Wulff-stationary rigidity]
\label{cor:wulff-rigidity}
Let $0<\rho<2$, and let $K\subset\mathbb R^n$ be a strictly convex
body. If
\[
 \mathcal M_{\rho,K}=cS_K
\]
for some constant $c>0$, then $K$ is a Euclidean ball.
\end{corollary}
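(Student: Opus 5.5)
The plan has two stages. First I turn the measure identity $\mathcal M_{\rho,K}=cS_K$ into a pointwise condition on $\partial K$, using strict convexity. Then I invoke the uniform-density rigidity result of Bucur--Fragal\`a \cite{BF23} for $0<\rho<2$.

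\emph{Pointwise identity.} By \eqref{eq:M-testing}, $\mathcal M_{\rho,K}=(\nu_K)_\#(w_K\,\mathcal H^{n-1}\restriction_{\partial K})$ with $w_K=2q_{r_K,K}+\beta_{\rho,K}$, and $S_K=(\nu_K)_\#\mathcal H^{n-1}\restriction_{\partial K}$. Strict convexity makes the inverse Gauss map $u\mapsto x_K(u)$ single valued on all of $\Sn$. Moreover, by the standard argument using the reverse spherical image, it is continuous. Since $\nu_K(x_K(u))=u$ at regular points, we have $x=x_K(\nu_K(x))$ for $\mathcal H^{n-1}$-a.e. $x\in\partial K$. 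Hence for every bounded Borel $\varphi$,
\[
\int_{\Sn}\varphi\,d\mathcal M_{\rho,K}=\int_{\Sn}\varphi(u)\,w_K(x_K(u))\,dS_K(u).
\]
So $\mathcal M_{\rho,K}=(w_K\circ x_K)\,S_K$ as measures, and the hypothesis gives $w_K\circ x_K=c$ $S_K$-a.e. Pulling back, $w_K=c$ $\mathcal H^{n-1}$-a.e. on $\partial K$. The function $q_{r_K,K}$ is continuous and every point of $\partial K$ is a limit of regular points in a set of full measure. Therefore
\[
|K\cap B_{r_K}(x)|=\tfrac12\,(c-\beta_{\rho,K})\qquad\text{for every }x\in\partial K,
\]
so $q_{r_K,K}$ is constant on $\partial K$. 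This is exactly where strict convexity is used: without it the inverse Gauss map is multivalued on the normal fibres, and one only obtains the fibre averages of $q_{r_K,K}$.

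\emph{Applying the rigidity theorem.} For a convex body whose boundary ball-intersection volume $|K\cap B_r(x)|$ is constant in $x\in\partial K$, Bucur--Fragal\`a \cite{BF23} prove that $K$ is a ball, provided $r$ lies in the nondegenerate range, i.e. $r<\operatorname{diam}K$ (or a comparable subcriticality assumption). I would check this hypothesis from $\rho<2$. The isodiametric inequality gives $\operatorname{diam}K\geq 2R_K$, hence
\[
r_K=\rho R_K<2R_K\leq\operatorname{diam}K.
\]
The main obstacle is matching their exact hypotheses, which may be stated in terms of the radius relative to the diameter or inradius, or may require some boundary regularity. If their theorem needs $r$ below a threshold not implied by $\rho<2$, I would first try to strengthen the pointwise condition. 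One route is to vary $x$ along $\partial K$ and use that $q_{r,K}$ is constant on $\partial K$ to derive a symmetry condition, in the spirit of Alexandrov's moving plane argument applied to the level set $\partial K$ of the function $q_{r_K,K}$. Otherwise I would fall back on the strong-stationarity statement of \Cref{thm:intro-rigidity}(ii), since constancy of $q_{r_K,K}$ on $\partial K$ is precisely its Euler--Lagrange condition.
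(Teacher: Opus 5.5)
Your proposal is correct and follows the paper's proof essentially step by step. Strict convexity gives a continuous single-valued inverse Gauss map, so $d\mathcal M_{\rho,K}/dS_K=w_K\circ x_K$. Radon--Nikodym uniqueness, continuity of $q_{r_K,K}$, and density of full-measure subsets of $\partial K$ then make $q_{r_K,K}$ constant on all of $\partial K$. Finally, $r_K<2R_K\leq\operatorname{diam}K$ is exactly the hypothesis under which the paper invokes \cite{BF23}.

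The only refinement is that \cite{BF23} yields a finite union of balls of equal radius, and you get a single ball because $\operatorname{int}K$ is connected. Your fallback strategies are therefore unnecessary; the strong-stationarity route rests on the same citation anyway.
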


\begin{proof}
Set $r:=r_K$ and define the continuous boundary weight
\begin{equation}
 w(x)
 =
 2q_{r,K}(x)+\beta_{\rho,K}.
 \label{eq:wulff-boundary-weight}
\end{equation}
By definition,
\begin{align}
  \mathcal M_{\rho,K}(A)&=\int_{\nu_K^{-1}(A)}w(x)\,d\mathcal{H}^{n-1}(x),
 \label{eq:wulff-M-pushforward}\\
 S_K(A)&=\mathcal{H}^{n-1}(\nu_K^{-1}(A)).
 \label{eq:wulff-S-pushforward}
\end{align}
Here and below the Gauss map is understood at the regular boundary
points, which form a set of full $\mathcal H^{n-1}$ measure.

Since $K$ is strictly convex, every supporting face is a singleton.
We may therefore define the support-point map
\[
 F(K,u)=\{X_K(u)\},
 \qquad u\in\mathbb S^{n-1}.
\]
This map is continuous: if $u_j\to u$, then every convergent
subsequence of $X_K(u_j)$ has its limit in $F(K,u)$, and the latter
contains only $X_K(u)$.  At every regular boundary point,
\[
 X_K(\nu_K(x))=x.
\]

We recall the measure-theoretic point that will be used here.  If
$\mu$ and $\nu$ are finite Borel measures and $\mu\ll\nu$ (that is,
every $\nu$-null Borel set is $\mu$-null), the Radon--Nikodym theorem
provides a function
$g=d\mu/d\nu\in L^1(\nu)$, unique $\nu$-almost everywhere, such that
\[
 \mu(A)=\int_A g\,d\nu
\]
for every Borel set $A$; equivalently, the same identity holds after
integration against every bounded Borel test function.  See
Folland~\cite[Theorem~3.8]{Folland1999}.

For every Borel set $A\subset\mathbb S^{n-1}$,
\eqref{eq:wulff-M-pushforward}, \eqref{eq:wulff-S-pushforward}, and
the identity above give
\begin{align*}
 \mathcal M_{\rho,K}(A)
 &=\int_{\nu_K^{-1}(A)}w(X_K(\nu_K(x)))\,
     d\mathcal H^{n-1}(x)\\
 &=\int_A w(X_K(u))\,dS_K(u).
\end{align*}
Thus $\mathcal M_{\rho,K}\ll S_K$, and the Radon--Nikodym theorem
identifies its density as
\begin{equation}
 \frac{d\mathcal M_{\rho,K}}{dS_K}(u)
 =
 w(X_K(u))
 =
 2q_{r,K}(X_K(u))+\beta_{\rho,K}
 \label{eq:strict-convex-RN-density}
\end{equation}
for $S_K$-almost every $u\in\mathbb S^{n-1}$.

On the other hand, the assumed identity
\[
 \mathcal M_{\rho,K}=cS_K
\]
is a Radon--Nikodym representation with density $c$.  The uniqueness
clause in the Radon--Nikodym theorem and
\eqref{eq:strict-convex-RN-density} therefore give
\[
 2q_{r,K}(X_K(u))+\beta_{\rho,K}=c
\]
for $S_K$-almost every $u$. Pulling this equality back by $\nu_K$ and
using $X_K(\nu_K(x))=x$ at regular boundary points gives
\begin{equation}
 q_{r,K}(x)
 =
 \frac{c-\beta_{\rho,K}}2
 \qquad
 \text{for }\mathcal H^{n-1}\text{-a.e. }x\in\partial K.
 \label{eq:wulff-q-ae-constant}
\end{equation}

The function $q_{r,K}$ is continuous. We claim that
\eqref{eq:wulff-q-ae-constant} holds at every point of $\partial K$.
Otherwise, there would exist $x_0\in\partial K$ at which the equality
fails. By continuity, it would then fail throughout some nonempty
relatively open neighborhood $U$ of $x_0$ in $\partial K$. Since the
boundary of a convex body is locally a Lipschitz graph,
\[
 \mathcal H^{n-1}(U)>0,
\]
contradicting \eqref{eq:wulff-q-ae-constant}. Thus
\begin{equation}
 q_{r,K}(x)
 =
 \frac{c-\beta_{\rho,K}}2
 \qquad
 \text{for every }x\in\partial K.
 \label{eq:wulff-q-everywhere-constant}
\end{equation}

The geometric argument following
\eqref{eq:strong-q-everywhere-constant} in the proof of
\Cref{prop:strong-stationarity} now applies without change. Indeed,
$0<\rho<2$ implies
\[
 r=r_K<\operatorname{diam}K.
\]
For $\Omega=\operatorname{int}K$, equation
\eqref{eq:wulff-q-everywhere-constant} says that
\[
 |\Omega\cap B_r(x)|
\]
is a positive constant for every $x\in\partial\Omega$. The rigidity
theorem of Bucur and Fragal\`a \cite{BF23} therefore implies that $\Omega$ is a
finite union of balls of the same radius. Since $\Omega$ is convex and
hence connected, it consists of a single ball. Consequently,
$K=\overline\Omega$ is a Euclidean ball.
\end{proof}

\begin{remark}[The facet obstruction and a counterexample]
\label{rem:facet-obstruction}
Strict convexity cannot be omitted.  On a facet $F$ of a polytope with
outer normal $u$, the measure equation \(\mathcal M_{\rho,K}=cS_K\)
determines only the facet average
\[
 \frac1{\mathcal H^{n-1}(F)}
 \int_F\bigl(2q_{r_K,K}+\beta_{\rho,K}\bigr),d\mathcal H^{n-1}=c;
\]
it does not imply pointwise constancy of \(q_{r_K,K}\) on $F$.
The cube $C=[-a,a]^n$ is an actual counterexample to Wulff-stationary
rigidity without strict convexity.  Hyperoctahedral symmetry makes both
\(\mathcal M_{\rho,C}\) and \(S_C\) constant multiples of
\(\sum_{i=1}^n(\delta_{e_i}+\delta_{-e_i})\); hence
\(\mathcal M_{\rho,C}=cS_C\), with
\(c=2\mathcal J_\rho(C)/|C|>0\) by the Euler identity.  Thus weak Wulff
stationarity does not yield the pointwise $r$-criticality required in
Bucur--Fragal\`a \cite{BF23}.
\end{remark}

\subsection{Stationarity and the ball-intersection equation}

The next statement explains the geometric meaning of the variational
measure.  It is a consequence of the first-variation calculation above
and the measurable-set rigidity theorem of Bucur and Fragal\`a
\cite{BF23}.

We first specify the meaning of stationarity under the volume
constraint that will be used below; see \cite[Section~17]{M12}; see also
\cite{FFMM15}. Let
$X\in C_c^1(\mathbb R^n;\mathbb R^n)$, let $\Phi_t$ be the flow
generated by $X$, that is the smooth map $(t,x)\in\mathbb{R}\times\mathbb{R}^n\longmapsto\Phi_t(x)\in\mathbb{R}^n$ defined by solving the family of ODEs (parameterized by $x\in\mathbb{R}^n$)
\begin{equation}\label{E:14}
    \begin{cases}
        \partial_t\Phi_t(x)=X(\Phi_t(x)),~~~~~~~~~~t\in\mathbb{R}\\
        \Phi_0(x)=x.
    \end{cases}
\end{equation}
By the implicit function theorem, there always exists $\epsilon>0$ such that $\{\Phi_t\}_{|t|<\epsilon}$ is a smooth family of diffeomorphisms.  Moreover,
\[
 \Phi_t(x)=x+tX(x)+o(t)
\]
and
\[
 \det D\Phi_t(x)
 =
 1+t\operatorname{div}X(x)+o(t),
\]
where the remainders are uniform on every fixed bounded set. Let $K_t=\Phi_t(K)$. We say that
$\mathcal J_\rho$ is stationary at $K$ under the volume constraint
with respect to compactly supported $C^1$ deformations if
\begin{equation}
 \left.\frac{d}{dt}\right|_{t=0}\mathcal J_\rho(K_t)=0
 \label{eq:strong-constrained-stationarity}
\end{equation}
for every $X\in C_c^1(\mathbb R^n;\mathbb R^n)$ satisfying
\begin{equation}
 \left.\frac{d}{dt}\right|_{t=0}|K_t|=0.
 \label{eq:strong-volume-constraint}
\end{equation}

\begin{proposition}[Strong stationarity implies spherical symmetry]
\label{prop:strong-stationarity}
Let $0<\rho<2$ and let $K\subset\mathbb R^n$ be a convex body.
Suppose that $\mathcal J_\rho$ is stationary at $K$ under the volume
constraint with respect to every compactly supported $C^1$
deformation of $\mathbb R^n$. Then $K$ is a Euclidean ball.
\end{proposition}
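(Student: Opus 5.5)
The plan is to compute the first variation of $\mathcal J_\rho$ along the flow $K_t=\Phi_t(K)$, obtain a boundary identity with a Lagrange multiplier, and reduce it to the Bucur--Fragal\`a condition. This parallels the proof of Theorem~\ref{thm:first-variation}, with the Wulff perturbation replaced by an ambient flow.

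\emph{Step 1 (first variation).} I separate the fixed-radius and moving-radius parts exactly as in Theorem~\ref{thm:first-variation}. For the fixed radius $r=r_K$, change variables $x=\Phi_t(x')$, $y=\Phi_t(y')$ in $\mathcal E_r(K_t)$. This gives an integral over $K\times K$ of
\[
\kai_{\{|\Phi_t(x)-\Phi_t(y)|<r\}}\det D\Phi_t(x)\det D\Phi_t(y).
\]
A more robust route is to write $\mathcal E_r(K_t)-\mathcal E_r(K)=2\int q_{r,K}\,\delta_t+O(\|\delta_t\|_{L^1}^2)$, where $\delta_t=\kai_{K_t}-\kai_K$. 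I then use $\|\delta_t\|_{L^1}=O(t)$, which holds for Lipschitz domains under $C^1$ flows. The next ingredient is the weighted transport formula
\[
\frac{d}{dt}\Big|_{t=0}\int_{K_t}\psi=\int_K\operatorname{div}(\psi X)=\int_{\partial K}\psi\,X\cdot\nu_K\,d\mathcal H^{n-1}
\]
for continuous $\psi$. To justify it for merely continuous $\psi$, I approximate $\psi$ uniformly by smooth functions and use the $O(t)$ bound on $|K_t\triangle K|$. Combining these gives the fixed-radius derivative $2\int_{\partial K}q_{r,K}\,X\cdot\nu_K$. The moving-radius term is handled verbatim as in \eqref{eq:moving-radius-term}, with $\int f\,dS_K$ replaced by $\int_{\partial K}X\cdot\nu_K$. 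The result is
\[
\frac{d}{dt}\Big|_{t=0}\mathcal J_\rho(K_t)=\int_{\partial K}\bigl(2q_{r,K}+\beta_{\rho,K}\bigr)X\cdot\nu_K\,d\mathcal H^{n-1},
\qquad
\frac{d}{dt}\Big|_{t=0}|K_t|=\int_{\partial K}X\cdot\nu_K\,d\mathcal H^{n-1}.
\]

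\emph{Step 2 (Lagrange multiplier and pointwise identity).} The same linear-algebra argument used above for $\mathcal M_{\rho,K}=\lambda_KS_K$ applies to the functionals $X\mapsto\int_{\partial K}w\,X\cdot\nu_K$ and $X\mapsto\int_{\partial K}X\cdot\nu_K$ on $C^1_c$. It yields a constant $\lambda$ with
\[
\int_{\partial K}(w-\lambda)\,X\cdot\nu_K\,d\mathcal H^{n-1}=0\quad\text{for all }X.
\]
The key difference from Wulff variations is that the normal component $X\cdot\nu_K$ is now rich enough to localize. Take $X=\phi\,N$, where $N$ is a smooth vector field and $\phi\in C^\infty_c$. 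This makes the functions $X\cdot\nu_K$ dense in $L^1(\partial K,\mathcal H^{n-1})$ up to a sign-controlled factor. For instance, near a point, choose $N$ to be a fixed unit vector $e$ with $e\cdot\nu_K\ge c>0$ on a boundary patch, which is possible because $\partial K$ is locally a Lipschitz graph. We then get $(w-\lambda)\,e\cdot\nu_K=0$ a.e. on the patch, hence $w=\lambda$ $\mathcal H^{n-1}$-a.e. on $\partial K$. By continuity of $q_{r,K}$, and the argument already given in Corollary~\ref{cor:wulff-rigidity}, this gives
\begin{equation}\label{eq:strong-q-everywhere-constant}
q_{r,K}(x)=\frac{\lambda-\beta_{\rho,K}}{2}\quad\text{for every }x\in\partial K.
\end{equation}

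\emph{Step 3 (geometric rigidity).} Since $0<\rho<2$, the isodiametric inequality gives $\operatorname{diam}K\ge2R_K>r$. So $r<\operatorname{diam}K$, and $|\Omega\cap B_r(x)|$ is a positive constant on $\partial\Omega$ for $\Omega=\operatorname{int}K$. Bucur--Fragal\`a \cite{BF23} then says $\Omega$ is a finite union of equal balls, and connectedness gives a single ball.

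I expect the main obstacle to be Step 1. The issue is justifying the differentiation for a merely Lipschitz boundary with only continuous weight $q_{r,K}$, and in particular checking that the quadratic remainder and the moving radius are uniformly $o(t)$. The localization in Step 2 is routine once the boundary formula holds.
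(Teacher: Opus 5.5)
Your proposal is correct and follows essentially the same route as the paper. You split the fixed-radius and moving-radius variations to obtain $\int_{\partial K}(2q_{r,K}+\beta_{\rho,K})\,X\cdot\nu_K$, extract a Lagrange multiplier, localize to make $q_{r,K}$ constant on $\partial K$, and conclude from $r_K<\operatorname{diam}K$ and Bucur--Fragal\`a. The only difference is cosmetic: you justify the weighted transport formula by uniformly approximating the continuous weight and using $|K_t\triangle K|=O(|t|)$, whereas the paper applies Gauss--Green directly, using that $q_{r,K}$ is Lipschitz.
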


\begin{proof}
Set $r=r_K$ and let $K_t=\Phi_t(K)$.  The standard transport formula
for a bounded finite-perimeter set, together with
\(|K_t\triangle K|=O(|t|)\), gives
\[
 \left.\frac d{dt}\right|_{t=0}\mathcal E_r(K_t)
 =2\int_{\partial^*K}q_{r,K}(x)X(x)\cdot\nu_K(x),
   d\mathcal H^{n-1}(x).
\]
Indeed, expand the two characteristic functions in \(\mathcal E_r\);
the quadratic remainder is \(O(t^2)\), while the linear term is
differentiated by the Gauss--Green formula, since \(q_{r,K}\) is
Lipschitz.  Moreover,
\[
 \left.\frac d{dt}\right|_{t=0}|K_t|
 =\int_{\partial^*K}X\cdot\nu_K,d\mathcal H^{n-1},
 \qquad
 r'_0=\frac r{n|K|}\left.\frac d{dt}\right|_{t=0}|K_t|.
\]
The uniform estimate
\(\|g_{K_t}-g_K\|_\infty\leq2|K_t\triangle K|\) and the coarea
representation of \(\mathcal E_s(K_t)\) justify differentiation with
respect to the moving radius.  Consequently,
\begin{equation}
 \left.\frac d{dt}\right|_{t=0}\mathcal J_\rho(K_t)
 =\int_{\partial^*K}
 \bigl(2q_{r,K}+\beta_{\rho,K}\bigr)X\cdot\nu_K,
 d\mathcal H^{n-1}.
 \label{eq:strong-adaptive-derivative-short}
\end{equation}
These standard transport steps are recorded in detail in the long version
of the manuscript; compare also \cite[Section~17]{M12}.

Let $L(X)$ and $G(X)$ denote the right-hand side of
\eqref{eq:strong-adaptive-derivative-short} and the volume derivative,
respectively.  Constrained stationarity says \(\ker G\subset\ker L\),
and $G\not\equiv0$ (take a compactly supported vector field equal to
\(x-a\) near $K$).  Hence $L=\lambda G$ for some constant $\lambda$.
By testing against arbitrary compactly supported vector fields, we obtain
\[
 2q_{r,K}+\beta_{\rho,K}=\lambda
 \qquad\mathcal H^{n-1}\text{-a.e. on }\partial K.
\]
Continuity of $q_{r,K}$ and the local Lipschitz-graph structure of
\(\partial K\) promote this to every boundary point; thus
\begin{equation}
 q_{r,K}(x)=\frac{\lambda-\beta_{\rho,K}}2
 \qquad (x\in\partial K).
 \label{eq:strong-q-everywhere-constant}
\end{equation}
Since $\rho<2$, the isodiametric inequality gives
\(r_K<\operatorname{diam}K\).  Convexity also yields, for
\(x\in\partial K\),
\[
 q_{r_K,K}(x)\geq
 \left(\frac{r_K}{\operatorname{diam}K}\right)^n|K|>0.
\]
Therefore $\Omega=\operatorname{int}K$ is a bounded connected uniformly
dense domain at a radius below its diameter.  The rigidity theorem of
Bucur and Fragal\`a \cite[Corollary~7 and Proposition~12]{BF23} implies
that $\Omega$, and hence $K$, is a ball.
\end{proof}

\section{Regularity of solutions}\label{sec:regularity}

For each fixed relative scale $\rho$, the covariogram-derived
first-variation measure gives a nonlocal Minkowski equation.  What is
specific to the present problem is the verification that this equation
falls within the classical Aleksandrov--Monge--Amp\`ere framework; the
resulting low-regularity theorem is a standard consequence of that
framework.  We use Caffarelli's localization and interior regularity
theory \cite{CaffarelliLocalization1990,CaffarelliW2p1990,CaffarelliRegularity1991},
against the classical Minkowski background of Pogorelov
\cite{Pogorelov1971Minkowski,Pogorelov1978}.

Subsection~\ref{4BCNW} contains the genuinely nonlocal input.  The
first-variation measure is a boundary-weighted surface-area measure;
uniform bounds for the weight turn positive bounded data for
\(\mathcal M_{\rho,K}\) into two-sided bounds for the classical surface
area measure \(S_K\).  The same argument gives the global Lipschitz
bound for the moving-ball overlap used in the bootstrap.  In
Subsection~\ref{4TLEMAE}, gnomonic coordinates convert this comparison into
two-sided Monge--Amp\`ere measure bounds for a finite convex potential
\(H_e\), without any preliminary smoothness or strict convexity of
\(K\).

Subsection~\ref{subsec:localization-C1gamma} records the ensuing standard low-regularity consequence:
Caffarelli's localization theorem and interior estimates give strict
convexity and \(C^{1,\gamma}\) control.  This result is retained as the
input for the genuinely equation-specific part of the section.
Subsection~\ref{subsec:nonlocal-MA-bootstrap} uses the nonlocal density formula and the Lipschitz
overlap bound to perform the two-step \(C^{2,\alpha}\) bootstrap;
Subsection~\ref{subsec:higher-regularity-transversality} identifies the additional uniform chord-transversality
required for higher regularity.  Finally, we also treats the
saturated regime, where the adaptive ball contains the whole body and
the equation reduces to the classical Minkowski equation.  We write only within this regularity section, use $r$ as shorthand for the
already defined radius $r_K$.  We recall the notation
\[
 q_{r,K}(x)=|K\cap B_r(x)|,
 \qquad
 g_K(z)=|K\cap(K+z)|.
\]
The boundary weight which occurs in the first variation is
\begin{equation}\label{eq:def-weight-regularity}
 w_K(x)=2q_{r,K}(x)+\beta_{\rho,K},
 \qquad
 \beta_{\rho,K}
 =\frac{r}{nV}\int_{\partial B_r}g_K(z)\,d\mathcal H^{n-1}(z).
\end{equation}
Thus, by the first-variation formula established earlier in the paper,
\begin{equation}\label{eq:M-weighted-SK}
 \mathcal M_{\rho,K}(A)=\int_{\nu_K^{-1}(A)}w_K(x)\,d\mathcal{H}^{n-1}(x),
 \qquad
 S_K(A)=\mathcal{H}^{n-1}(\nu_K^{-1}(A)).
\end{equation}
Here and below the Gauss map is understood at the
\(\mathcal H^{n-1}\)-almost every regular boundary point.  Formula
\eqref{eq:M-weighted-SK} is consequently meaningful for every convex body.

\subsection{Bounds and continuity of the nonlocal weight}\label{4BCNW}
\begin{lemma}[Translation estimate]
\label{lem:BV-translation}
For every \(u\in BV(\mathbb R^n)\) and \(h\in\mathbb R^n\),
\[
\|u(\cdot-h)-u\|_{L^1(\mathbb R^n)}
\leq |h|\,|Du|(\mathbb R^n).
\]
In particular, if \(E\subset\mathbb R^n\) has finite measure and finite
perimeter, then
\[
\|\chi_E(\cdot-h)-\chi_E\|_{L^1(\mathbb R^n)}
=|(E+h)\triangle E|\leq |h|P(E).
\]
\end{lemma}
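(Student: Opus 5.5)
The plan is to argue first for smooth compactly supported functions and then pass to general $BV$ functions by density in the strict (area) sense. For $u\in C_c^\infty(\mathbb R^n)$ the fundamental theorem of calculus along the segment from $x-h$ to $x$ gives
\[
 u(x-h)-u(x)=-\int_0^1\nabla u(x-sh)\cdot h\,ds .
\]
Integrating $|u(x-h)-u(x)|$ in $x$, applying Fubini's theorem, and using the translation invariance of Lebesgue measure yields
\[
 \|u(\cdot-h)-u\|_{L^1}\le |h|\int_0^1\!\!\int_{\mathbb R^n}|\nabla u(x-sh)|\,dx\,ds=|h|\,\|\nabla u\|_{L^1}=|h|\,|Du|(\mathbb R^n).
\]

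For general $u\in BV(\mathbb R^n)$ I would invoke the standard approximation theorem (e.g.\ mollification combined with cutoff; see Ambrosio--Fusco--Pallara or Evans--Gariepy). It gives $u_k\in C_c^\infty$ with $u_k\to u$ in $L^1$ and $|Du_k|(\mathbb R^n)\to|Du|(\mathbb R^n)$. For global BV on $\mathbb R^n$, mollification alone suffices, since $\|\nabla(u*\eta_\varepsilon)\|_{L^1}\le|Du|(\mathbb R^n)$. Truncation by cutoffs $\chi_R$ adds an error $\|u\nabla\chi_R\|_{L^1}\le C R^{-1}\|u\|_{L^1}\to0$. Since translation is an isometry of $L^1$, we have $u_k(\cdot-h)-u_k\to u(\cdot-h)-u$ in $L^1$, and passing to the limit in the smooth inequality gives the claim. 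The only point needing care is this approximation, and it is standard. Alternatively, one can avoid the cutoff entirely: mollify to get $u_\varepsilon=u*\eta_\varepsilon\in C^\infty\cap W^{1,1}$, apply the same FTC computation (which holds for $W^{1,1}\cap C^1$ functions), and use $\|\nabla u_\varepsilon\|_{L^1}\le|Du|(\mathbb R^n)$ together with $u_\varepsilon\to u$ in $L^1$.

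The second statement is the specialization $u=\chi_E$. For $E$ of finite measure and finite perimeter, $\chi_E\in BV(\mathbb R^n)$ with $|D\chi_E|(\mathbb R^n)=P(E)$. Moreover $\chi_E(\cdot-h)=\chi_{E+h}$ and $|\chi_{E+h}-\chi_E|=\chi_{(E+h)\triangle E}$, so the $L^1$ norm equals $|(E+h)\triangle E|$, and the bound $|h|P(E)$ follows. No step is a genuine obstacle; the mildest one is ensuring that the $BV$ approximation preserves (or does not exceed in the limit) the total variation, which the mollification estimate handles.
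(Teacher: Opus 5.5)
Your proposal is correct and follows essentially the same route as the paper, which mollifies $u$, applies the fundamental theorem of calculus to $u_\varepsilon=u*\eta_\varepsilon$ with $\|\nabla u_\varepsilon\|_{L^1}\le |Du|(\mathbb R^n)$, and passes to the limit using that translations are $L^1$-isometries. This is exactly your ``alternative'' argument, so the $C_c^\infty$ cutoff step is unnecessary, and the specialization to $u=\chi_E$ is handled identically.
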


\begin{proof}
Let \(u_\varepsilon=u*\eta_\varepsilon\), where
\(\eta_\varepsilon\) is a standard mollifier.  Then
\(u_\varepsilon\to u\) in \(L^1(\mathbb R^n)\) and
\(\|\nabla u_\varepsilon\|_{L^1}\leq |Du|(\mathbb R^n)\).  The
fundamental theorem of calculus and translation invariance give directly
\[
\begin{aligned}
\|u_\varepsilon(\cdot-h)-u_\varepsilon\|_{L^1}
&\leq \int_0^1\!\int_{\mathbb R^n}
 |h\cdot\nabla u_\varepsilon(x-th)|\,dx\,dt \\
&\leq |h|\,\|\nabla u_\varepsilon\|_{L^1}
 \leq |h|\,|Du|(\mathbb R^n).
\end{aligned}
\]
Since translations are \(L^1\)-isometries,
\[
\|u(\cdot-h)-u\|_{L^1}
\leq 2\|u-u_\varepsilon\|_{L^1}
   +\|u_\varepsilon(\cdot-h)-u_\varepsilon\|_{L^1}.
\]
Letting \(\varepsilon\downarrow0\) proves the asserted \(BV\) estimate.
For \(u=\chi_E\), the identities
\(|D\chi_E|(\mathbb R^n)=P(E)\) and
\(|\chi_E(\cdot-h)-\chi_E|=\chi_{(E+h)\triangle E}\) give the final
claim.
\end{proof}

The following elementary observation is the source of the uniform
ellipticity of the associated Aleksandrov equation.

\begin{proposition}[Bounds for the boundary weight]\label{prop:weight-bounds}
 Let \(K\subset\mathbb R^n\) be a convex body and put
 \[
      D=\operatorname{diam}K,
      \qquad
      \tau_K=\min\left\{1,\frac rD\right\}.
 \]
 Then, for every \(x\in\partial K\),
 \begin{equation}\label{eq:q-two-sided}
      \tau_K^nV\le q_{r,K}(x)\le V,
 \end{equation}
 and
 \begin{equation}\label{eq:weight-two-sided}
      2\tau_K^nV\le w_K(x)\le(2+\rho^n)V.
 \end{equation}
 Moreover, \(q_{r,K}\) is globally Lipschitz and
 \begin{equation}\label{eq:q-Lipschitz}
   |q_{r,K}(x)-q_{r,K}(y)|
       \le n\omega_n r^{n-1}|x-y|,
       \qquad x,y\in\mathbb R^n.
 \end{equation}
\end{proposition}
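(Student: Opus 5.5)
The plan is to treat the three assertions separately, each by an elementary convexity or measure argument.

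\emph{Two-sided bound for \(q_{r,K}\).} The upper bound \(q_{r,K}(x)\le|K|=V\) is trivial. For the lower bound, fix \(x\in\partial K\) and use the homothety with centre \(x\) and ratio \(\tau_K\): put \(E=x+\tau_K(K-x)=(1-\tau_K)x+\tau_K K\). Convexity of \(K\) together with \(x\in K\) and \(\tau_K\le1\) give \(E\subset K\). For any \(z\in K\) we have \(|(1-\tau_K)x+\tau_K z-x|=\tau_K|z-x|\le\tau_K D\le r\). Hence \(E\subset K\cap\overline{B_r(x)}\), and since the sphere \(\partial B_r(x)\) is Lebesgue-null, \(q_{r,K}(x)\ge|E|=\tau_K^nV\). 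This is the quantitative version of the argument used in Step~1 of the proof of \Cref{thm:Minkowski-problem}.

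\emph{Bounds for the weight.} Since \(\beta_{\rho,K}\ge0\), the lower bound \(w_K\ge2q_{r,K}\ge2\tau_K^nV\) is immediate. For the upper bound, use \(g_K\le V\) in \eqref{eq:6}:
\[
 \beta_{\rho,K}\le\frac{r^n}{nV}\,n\omega_n V=\omega_n r^n=\rho^n V,
\]
because \(r^n=\rho^nR_K^n=\rho^nV/\omega_n\). Combined with \(q_{r,K}\le V\), this gives \(w_K\le(2+\rho^n)V\).

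\emph{Lipschitz estimate.} Write \(q_{r,K}(x)=\int\chi_K\chi_{B_r(x)}\), so that
\[
 |q_{r,K}(x)-q_{r,K}(y)|\le|B_r(x)\triangle B_r(y)|.
\]
Apply \Cref{lem:BV-translation} with \(E=B_r\) and \(h=y-x\). This gives \(|B_r(x)\triangle B_r(y)|\le|x-y|\,P(B_r)=n\omega_nr^{n-1}|x-y|\), which is \eqref{eq:q-Lipschitz} and holds for all \(x,y\in\mathbb R^n\).

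No step is a genuine obstacle. The only points needing care are checking that the homothetic copy lies inside both \(K\) and the ball (with the closed versus open ball handled by the null sphere), and the normalization \(\omega_nr^n=\rho^nV\) in the bound for \(\beta_{\rho,K}\).
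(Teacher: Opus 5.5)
Your proposal is correct and follows the paper's proof essentially step by step. You use the homothety $x+\tau_K(K-x)\subset K\cap\overline{B_r(x)}$ for the lower bound, the estimate $0\le g_K\le V$ to get $0\le\beta_{\rho,K}\le\omega_n r^n=\rho^nV$, and \Cref{lem:BV-translation} applied to $B_r$ for the Lipschitz constant $P(B_r)=n\omega_n r^{n-1}$.
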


\begin{proof}
Let 
\begin{equation*}
    D=\diam K,~~~~~~~~\tau_K=\min\left\{1, \frac{r}{D}\right\}.
\end{equation*}
 Fix \(x\in\partial K\), consider convex body $K_{x,\tau_K}=x+\tau_K (K-x)$. For any $y\in K$, $z=x+\tau_K (K-x)$, since $0<\tau_K\leq 1$, then
 \begin{equation*}
     z=(1-\tau_K)x+\tau_K y.
 \end{equation*}
By convexity of $K$, $z\in K$. Moreover,
\begin{equation*}
    |z-x|=\tau_K|y-x|\leq \tau_K D\leq r,
\end{equation*}
hence
 \[
       x+\tau_K(K-x)\subset K\cap \overline B_r(x).
 \]
 The sphere \(\partial B_r(x)\) has zero \(n\)-dimensional Lebesgue
 measure, so
 \[
  |K\cap B_r(x)|
  =|K\cap\overline B_r(x)|
  \ge |x+\tau_K(K-x)|=\tau_K^nV.
 \]
 This proves the lower bound in \eqref{eq:q-two-sided}; the upper bound is
 immediate.

 The covariogram satisfies \(0\le g_K\le V\).  Hence
 \[
  0\le\beta_{\rho,K}
  \le \frac{r}{nV}\,V\mathcal H^{n-1}(\partial B_r)
  =\omega_nr^n=\rho^nV.
 \]
 Combining this with \eqref{eq:q-two-sided} proves
 \eqref{eq:weight-two-sided}.

 Finally, by Lemma \ref{lem:BV-translation},
 \begin{align*}
 |q_{r,K}(x)-q_{r,K}(y)|
 &\leq \int_K|\chi_{B_r(x)}(z)-\chi_{B_r(y)}(z)|dz\\
 &\leq |(B_r(x)\mathbin\triangle B_r(y))\cap K|\\
 &\leq |B_r(x)\mathbin\triangle B_r(y)|  \\
 &=\bigl\|\chi_{B_r}(\,\cdot-x)
           -\chi_{B_r}(\,\cdot-y)\bigr\|_{L^1(\mathbb R^n)}\\
           &\leq P(B_r)|x-y|,
 \end{align*}
which yields \eqref{eq:q-Lipschitz}.
\end{proof}

The comparison between the new measure and the classical surface area
measure is now immediate.

\begin{corollary}[Comparison with surface area measure]
\label{cor:surface-area-comparison}
 Suppose that
 \begin{equation}\label{eq:M-density-bounded}
       \mathcal M_{\rho,K}=f\,d\sigma,
       \qquad 0<f_-\le f\le f_+<\infty
       \quad\text{for \(\sigma\)-a.e. }u\in\mathbb S^{n-1}.
 \end{equation}
 Then \(S_K=s_K\,d\sigma\), where
 \begin{equation}\label{eq:s-density-bounds}
   \frac{f_-}{(2+\rho^n)V}
       \le s_K(u)
       \le \frac{f_+}{2\tau_K^nV}
       \qquad\text{for \(\sigma\)-a.e. }u\in\mathbb S^{n-1}.
 \end{equation}
 Equivalently, if
 \[
   \lambda_K=\frac{f_-}{(2+\rho^n)V},
   \qquad
   \Lambda_K=\frac{f_+}{2\tau_K^nV},
 \]
 then
 \begin{equation}\label{eq:measure-comparison}
       \lambda_K\,\sigma\le S_K\le\Lambda_K\,\sigma.
 \end{equation}
\end{corollary}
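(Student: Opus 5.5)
The plan is to compare $\mathcal M_{\rho,K}$ and $S_K$ as measures, using the two-sided weight bound \eqref{eq:weight-two-sided} from \Cref{prop:weight-bounds}, and then read off the density. For a Borel set $A\subset\Sn$, \eqref{eq:M-weighted-SK} gives
\[
 \mathcal M_{\rho,K}(A)=\int_{\nu_K^{-1}(A)}w_K\,d\mathcal H^{n-1},
 \qquad
 S_K(A)=\mathcal H^{n-1}(\nu_K^{-1}(A)).
\]
Since $2\tau_K^nV\le w_K\le(2+\rho^n)V$ at every boundary point, we obtain for every Borel $A$
\[
 2\tau_K^nV\,S_K(A)\le\mathcal M_{\rho,K}(A)\le(2+\rho^n)V\,S_K(A).
\]

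Next I would substitute the hypothesis $\mathcal M_{\rho,K}=f\,d\sigma$ with $f_-\le f\le f_+$ $\sigma$-a.e. This yields
\[
 f_-\sigma(A)\le\mathcal M_{\rho,K}(A)\le f_+\sigma(A)
\]
for every Borel $A$. Combining the two chains gives
\[
 S_K(A)\le\frac{\mathcal M_{\rho,K}(A)}{2\tau_K^nV}\le\Lambda_K\,\sigma(A)
 \quad\text{and}\quad
 S_K(A)\ge\frac{\mathcal M_{\rho,K}(A)}{(2+\rho^n)V}\ge\lambda_K\,\sigma(A),
\]
which is exactly \eqref{eq:measure-comparison}.

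The only point requiring a remark is the passage from \eqref{eq:measure-comparison} to a density. The upper inequality gives $S_K\ll\sigma$, so Radon--Nikodym (as recalled in the proof of \Cref{cor:wulff-rigidity}) provides $s_K=dS_K/d\sigma\in L^1(\sigma)$. Applying both inequalities to the sets $\{s_K>\Lambda_K+\varepsilon\}$ and $\{s_K<\lambda_K-\varepsilon\}$ shows that these sets are $\sigma$-null. Letting $\varepsilon\downarrow0$ along a sequence then gives \eqref{eq:s-density-bounds} $\sigma$-a.e.

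There is no genuine obstacle. The only care needed is that $w_K$ is defined and bounded at every boundary point, not just almost everywhere, so the measure-level comparison holds without any regularity assumption on $K$.
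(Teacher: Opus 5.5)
Your proof is correct and follows the paper's argument: the paper also integrates the pointwise bound \eqref{eq:weight-two-sided} over $\nu_K^{-1}(A)$ to get $2\tau_K^nV\,S_K(A)\le\mathcal M_{\rho,K}(A)\le(2+\rho^n)V\,S_K(A)$, and then combines this with the hypothesis on $f$. Your explicit Radon--Nikodym step fills in the passage to the density bounds, which the paper only asserts: absolute continuity comes from the upper bound, and testing on the sets $\{s_K>\Lambda_K+\varepsilon\}$ and $\{s_K<\lambda_K-\varepsilon\}$ gives the a.e.\ bounds.
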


\begin{proof}
 For every Borel set \(A\subset\mathbb S^{n-1}\),
 Proposition~\ref{prop:weight-bounds} and \eqref{eq:M-weighted-SK} give
 \[
  2\tau_K^nV\,S_K(A)
       \le \mathcal M_{\rho,K}(A)
       \le (2+\rho^n)V\,S_K(A).
 \]
 Combining this with \eqref{eq:M-density-bounded} proves
 \eqref{eq:measure-comparison} and hence \eqref{eq:s-density-bounds}.
\end{proof} 

\subsection{The local Euclidean Monge-Amp\`ere equation}\label{4TLEMAE}

Gnomonic coordinates recast the equation on $\mathbb{S}^{n-1}$ as a
Euclidean Monge--Amp\`ere equation on $\mathbb{R}^{n-1}$.  This permits
the use of localization and interior regularity theory without assuming
in advance that $K$ is smooth or strictly convex.

The reduction of the Minkowski equation to a local Monge--Amp\`ere
equation is classical.  For the convex-analytic formulation and
generalized solutions, see Bakelman~\cite{B94} and Pogorelov's treatment
of elliptic Monge--Amp\`ere equations
\cite{Pogorelov1964,Pogorelov1971MA}, see also Chou and Wang~\cite{ChouWang2006}, Bianchi, B\"or\"oczky, and
Colesanti~\cite{BBC20}.  We record the particular gnomonic representation used in
\Cref{lem:gnomonic-MA} explicitly, since it fixes the normalization of
the chart and the power of $1+|p|^2$.  The reciprocal Legendre estimate
below is the local two-sided form of the Legendre-duality identity for
Aleksandrov Monge--Amp\`ere measures; see Figalli
\cite[Lemma~2.4]{Figalli2017}. 

Fix \(e\in\mathbb S^{n-1}\) and identify \(e^\perp\) with
\(\mathbb R^{n-1}\).  Define
\begin{equation}\label{eq:gnomonic-definitions}
 v_e(p)=\frac{p+e}{\sqrt{1+|p|^2}},
 \qquad
 H_e(p)=h_K(p+e),
 \qquad p\in e^\perp,
\end{equation}
where the support function is evaluated using its one-homogeneous
extension.  The function \(H_e\) is finite and convex on all of
\(e^\perp\), since $h_K$ is finite and convex.

\begin{lemma}[Support faces and gnomonic subdifferentials]
\label{lem:gnomonic-subdifferential}
Let $\pi_{e^\perp}$ denote the orthogonal projection onto $e^\perp$.
For $p\in e^\perp$,
\[
 \partial H_e(p)=\pi_{e^\perp}F(K,p+e),
\]
where
\[
 F(K,\xi):=\{z\in K:z\cdot\xi=h_K(\xi)\}.
\]
More precisely, the map
\[
 a\longmapsto
 a+\bigl(H_e(p)-p\cdot a\bigr)e
\]
is a bijection from $\partial H_e(p)$ onto $F(K,p+e)$.  In addition,
if
\[
 D_e:=\pi_{e^\perp}K,
 \qquad
 g_e(a):=\max\{t\in\mathbb R:a+te\in K\}
 \quad(a\in D_e),
\]
then $H_e^*=-g_e$ on $D_e$ and $H_e^*=+\infty$ on
$e^\perp\setminus D_e$.
\end{lemma}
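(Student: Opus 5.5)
The plan is to reduce everything to the standard subdifferential formula for support functions, using the fact that $H_e$ is the restriction of the one-homogeneous convex function $h_K$ to the affine hyperplane $e+e^\perp$. Recall that for $\xi\neq0$ one has $\partial h_K(\xi)=F(K,\xi)$ (Schneider, Theorem~1.7.4). Since $p\mapsto p+e$ is an affine embedding of $e^\perp$ whose linear part is the inclusion $e^\perp\hookrightarrow\R^n$, the chain rule for subdifferentials of finite convex functions gives
\[
 \partial H_e(p)=\pi_{e^\perp}\,\partial h_K(p+e)=\pi_{e^\perp}F(K,p+e).
\]
No constraint qualification is needed, because $h_K$ is finite everywhere. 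The chain rule for a finite convex function composed with an affine map is classical (Rockafellar, Theorem~23.9). Alternatively, one can verify the inclusion $\supseteq$ directly: for $z\in F(K,p+e)$, put $a=\pi_{e^\perp}z$. Then for every $p'$,
\[
 H_e(p')\ge z\cdot(p'+e)=H_e(p)+a\cdot(p'-p).
\]

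For the bijection, take $z\in F(K,p+e)$ and write $z=a+te$ with $a\in e^\perp$. The face equation $z\cdot(p+e)=h_K(p+e)$ reads $a\cdot p+t=H_e(p)$, so $t=H_e(p)-p\cdot a$. Hence $z$ is recovered from $a$ by the stated formula, which gives injectivity of $\pi_{e^\perp}$ on the face. Surjectivity onto $F(K,p+e)$ follows because every $a\in\partial H_e(p)$ is the projection of some point of the face, by the formula above. That point must then equal $a+(H_e(p)-p\cdot a)e$.

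For the Legendre transform, write
\[
 H_e^*(a)=\sup_{p\in e^\perp}\bigl(a\cdot p-\max_{z\in K}z\cdot(p+e)\bigr).
\]
For $a\in D_e$, the point $z_0=a+g_e(a)e$ lies in $K$ by compactness. This gives $H_e(p)\ge a\cdot p+g_e(a)$, hence $H_e^*(a)\le -g_e(a)$.

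For the reverse inequality, and for the value $+\infty$ off $D_e$, I will use a separation argument. That step is the main obstacle.

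Fix $a\in e^\perp$ and $t>g_e(a)$, or any $t$ if $a\notin D_e$. Then $a+te\notin K$, so there is a vector $\xi$ with $\xi\cdot(a+te)>h_K(\xi)$. I want to normalize $\xi$ to the form $p+e$, and this requires the $e$-component $\xi\cdot e$ to be positive.

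\emph{Case $a\in D_e$.} The whole segment $\{a+se:s\le g_e(a)\}\cap K$ is nonempty, so points $a+se\in K$ with $s<t$ exist. The separating functional must distinguish $a+te$ from these points, which forces $\xi\cdot e>0$. Scaling $\xi$ to $p+e$ gives $a\cdot p+t>H_e(p)$, so $H_e^*(a)>-t$. Letting $t\downarrow g_e(a)$ yields $H_e^*(a)\ge -g_e(a)$.

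\emph{Case $a\notin D_e$.} Separate $a$ from the compact convex set $D_e$ inside $e^\perp$. There is $p_0\in e^\perp$ with $a\cdot p_0>h_{D_e}(p_0)=\lim_{s\to\infty}h_K(sp_0+e)/s$. Taking $p=sp_0$, one has $H_e(sp_0)\le s\,h_{D_e}(p_0)+C$, so $a\cdot sp_0-H_e(sp_0)\to\infty$ as $s\to\infty$. Thus $H_e^*(a)=+\infty$.
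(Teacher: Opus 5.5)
Your proof is correct, but it runs in the opposite order from the paper's. The paper computes the conjugate first. It writes $H_e(p)=\max_{a\in D_e}\{p\cdot a+g_e(a)\}=G_e^*(p)$, where $G_e=-g_e$ on $D_e$ and $G_e=+\infty$ elsewhere. It notes that $G_e$ is proper, lower semicontinuous and convex, and obtains $H_e^*=G_e$ from Fenchel--Moreau biconjugation. The inclusion $\partial H_e(p)\subseteq\pi_{e^\perp}F(K,p+e)$ is then read off from the Fenchel--Young equality $H_e(p)+H_e^*(a)=p\cdot a$. That equality forces $a\in D_e$ and $g_e(a)=H_e(p)-p\cdot a$, so $a+g_e(a)e$ is the face point. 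The reverse inclusion uses the same supporting inequality you wrote down.

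You instead get the subdifferential identity from $\partial h_K(\xi)=F(K,\xi)$ and the affine chain rule, and then compute $H_e^*$ by hand. You use the easy bound from the point $a+g_e(a)e$, a separation of $a+te$ from $K$, and a separation of $a$ from $D_e$ for the value $+\infty$. In the second step the sign $\xi\cdot e>0$ is correctly forced by the point $a+g_e(a)e$ lying below $a+te$. In the third, $H_e(sp_0)\le s\,h_{D_e}(p_0)+h_K(e)$ is just subadditivity of $h_K$.

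In effect you re-prove biconjugation for this special case. This makes the two halves of the lemma logically independent. It also avoids checking the lower semicontinuity of $G_e$, that is, closedness of $D_e$ and upper semicontinuity of $g_e$, which the paper asserts without comment. The paper's route is shorter, and the face correspondence falls out of the conjugate formula rather than depending on an external chain rule.
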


\begin{proof}
The extended-valued function
\[
 G_e(a):=
 \begin{cases}
  -g_e(a),&a\in D_e,\\
  +\infty,&a\notin D_e
 \end{cases}
\]
is proper, lower semicontinuous, and convex.  Directly from the
definition of the support function,
\[
 H_e(p)=\max_{a\in D_e}\{p\cdot a+g_e(a)\}=G_e^*(p).
\]
Thus $H_e^*=G_e$.  If $a\in\partial H_e(p)$, Fenchel subgradient
duality gives
\[
 p\in\partial G_e(a),
 \qquad
 G_e(a)=p\cdot a-H_e(p).
\]
In particular, $a\in D_e$, and
\[
 z:=a+\bigl(H_e(p)-p\cdot a\bigr)e
   =a+g_e(a)e
\]
belongs to $K$ and satisfies $z\cdot(p+e)=H_e(p)=h_K(p+e)$.
Hence $z\in F(K,p+e)$.  Conversely, if $z\in F(K,p+e)$ and
$a=\pi_{e^\perp}z$, then, for every $q\in e^\perp$,
\[
 H_e(q)\geq z\cdot(q+e)
 =H_e(p)+a\cdot(q-p).
\]
Therefore $a\in\partial H_e(p)$, and the displayed formula for $z$
follows from $z\cdot(p+e)=H_e(p)$.  This proves all the assertions.
\end{proof}

We recall the Aleksandrov convention used below.  If $u$ is a finite
convex function on an open convex set $\Omega\subset\mathbb R^{n-1}$,
its subdifferential at $p\in\Omega$ is denoted by $\partial u(p)$; for a
Borel set $E\subset\Omega$, set
\begin{equation}
 \partial u(E):=\bigcup_{p\in E}\partial u(p),
 \qquad
 \operatorname{MA}_{u}(E):=\bigl|\partial u(E)\bigr|.
 \label{eq:Aleksandrov-MA-definition}
\end{equation}
This is the Aleksandrov Monge--Amp\`ere measure, a locally finite Borel
measure; see Figalli~\cite[Definition~2.1]{Figalli2017}.  When $u$ is
$C^2$, the area formula gives
\(\operatorname{MA}_{u}=(\det D^2u)\,dp\).  In particular,
\eqref{eq:Aleksandrov-MA-definition} applies to $H_e$ without any
smoothness or strict-convexity assumption.

\begin{lemma}
\label{lem:gnomonic-MA}The Aleksandrov Monge--Amp\`ere measure of $H_e$ satisfies
\begin{equation}
 \operatorname{MA}_{H_e}(E)
 =\int_{v_e(E)}(u\cdot e)\,dS_K(u)
 \label{eq:gnomonic-measure-invariant}
\end{equation}
for every Borel set $E\subset e^\perp$. In particular, if $S_K=s_K\,d\sigma$, then
\begin{equation}
 \operatorname{MA}_{H_e}(E)
 =\int_E s_K(v_e(p))(1+|p|^2)^{-(n+1)/2}\,dp
 \label{eq:gnomonic-MA}
\end{equation}
for every Borel set $E\subset e^\perp$. Equivalently,
\begin{equation}
 \det D^2H_e(p)
 =s_K(v_e(p))(1+|p|^2)^{-(n+1)/2}
 \label{eq:gnomonic-det}
\end{equation}
in the Aleksandrov sense.
\end{lemma}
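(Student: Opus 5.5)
The plan is to compute $\partial H_e(E)$ directly from \Cref{lem:gnomonic-subdifferential}, and then change variables from the projected boundary to the sphere.

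By that lemma, $\partial H_e(E)=\pi_{e^\perp}\bigl(\bigcup_{p\in E}F(K,p+e)\bigr)$. The point $p\in e^\perp$ and the direction $v_e(p)$ determine the same support face, since $F(K,\xi)$ depends only on the ray of $\xi$. Hence
\[
 \bigcup_{p\in E}F(K,p+e)=\tau_K(v_e(E)),
\]
where $\tau_K(\omega)$ denotes the reverse spherical image, i.e.\ the set of boundary points having some outer normal in $\omega$. So the first identity to prove is
\[
 \operatorname{MA}_{H_e}(E)=\bigl|\pi_{e^\perp}\tau_K(v_e(E))\bigr|_{n-1}.
\]

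Next I compare this projected area with the surface area measure. Every normal in $v_e(E)$ lies in the open hemisphere $\{u\cdot e>0\}$, so the boundary points in $\tau_K(v_e(E))$ belong to the upper graph part of $\partial K$ over $D_e$, namely $a\mapsto a+g_e(a)e$ with $g_e$ concave. Two facts about this part are needed.
\begin{enumerate}[label=(\alph*)]
\item Regular boundary points have a unique normal $\nu_K$, and the singular set is $\mathcal H^{n-1}$-null. Its projection is therefore also $(n-1)$-null, because projection is $1$-Lipschitz.
\item On the graph, the area formula gives $d a=(\nu_K\cdot e)\,d\mathcal H^{n-1}$.
\end{enumerate}
Up to null sets, $\tau_K(\omega)$ coincides with $\nu_K^{-1}(\omega)$. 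Since $\pi_{e^\perp}$ is injective on the upper graph, this yields
\[
 \bigl|\pi_{e^\perp}\tau_K(\omega)\bigr|=\int_{\nu_K^{-1}(\omega)}\nu_K\cdot e\,d\mathcal H^{n-1}=\int_\omega(u\cdot e)\,dS_K(u).
\]
Here the last equality is the definition of $S_K$ as a pushforward, applied with the continuous bounded weight $u\cdot e$ on $\omega$. Taking $\omega=v_e(E)$ gives \eqref{eq:gnomonic-measure-invariant}. Measurability is not an issue: $v_e$ is a homeomorphism of $e^\perp$ onto the open hemisphere, so $v_e(E)$ is Borel.

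For \eqref{eq:gnomonic-MA}, I substitute $S_K=s_K\,d\sigma$ and pull back through $v_e$. The gnomonic chart has Jacobian $d\sigma(v_e(p))=(1+|p|^2)^{-n/2}\,dp$. Also $v_e(p)\cdot e=(1+|p|^2)^{-1/2}$, and the product of these two factors is $(1+|p|^2)^{-(n+1)/2}$. The Jacobian is a routine computation: the radial projection from the tangent plane at distance $1$ contributes $(1+|p|^2)^{-(n-1)/2}$ from scaling, times a cosine factor $(1+|p|^2)^{-1/2}$. Finally, \eqref{eq:gnomonic-det} is just the statement that $\operatorname{MA}_{H_e}$ has this density, which is what ``in the Aleksandrov sense'' means.

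The main obstacle is the null-set bookkeeping in (a). Singular boundary points can have many normals, so $\tau_K(\omega)$ may strictly contain $\nu_K^{-1}(\omega)$, and one must check that the excess has projected measure zero. I expect this to be handled by the fact just cited: the singular set is $\mathcal H^{n-1}$-null, so its Lipschitz projection is null as well. This is also the standard fact that $S_K$ is well defined through $\tau_K$, so it can be quoted from Schneider.
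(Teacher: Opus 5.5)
Your proof is correct, but it follows a different route from the paper. The paper does not compute directly. It cites Pogorelov, proves the identities for smooth bodies following Huang--Liu, and then passes to a general body by approximation, using weak convergence of Aleksandrov Monge--Amp\`ere measures and of surface area measures.

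You instead work straight from the definition $\operatorname{MA}_{H_e}(E)=|\partial H_e(E)|$. \Cref{lem:gnomonic-subdifferential} identifies $\partial H_e(E)$ with the vertical projection of the reverse spherical image $\tau_K(v_e(E))$, each point of which is the top point of its fibre. The area formula for the concave graph $a\mapsto a+g_e(a)e$ then converts projected area into $\int(\nu_K\cdot e)\,d\mathcal H^{n-1}$.

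What your route buys:
\begin{itemize}
\item It is self-contained and gives the identity for every Borel set at once, with no limiting argument.
\item It reuses a lemma the paper has already proved.
\item It shows that the weight $u\cdot e$ is simply the cosine factor of the vertical projection.
\end{itemize}
The approximation route is shorter on the page. However, it relies on the smooth computation and on two weak-continuity theorems. It must then identify the limiting Radon measures through test functions supported in compact subsets of $e^\perp$, which correspond to compact subsets of the open hemisphere.

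When you write yours out, make the bookkeeping you flagged explicit:
\begin{itemize}
\item $g_e$ is only locally Lipschitz on $\operatorname{int}D_e$, so apply the area formula on an exhaustion of $\operatorname{int}D_e$.
\item The regular top points with $\nu_K\cdot e>0$ are exactly the points $a+g_e(a)e$ with $a\in\operatorname{int}D_e$ at which $g_e$ is differentiable. At those points $\nu_K\cdot e=(1+|\nabla g_e(a)|^2)^{-1/2}$.
\item A top point over $\partial D_e$ that carries a normal in the open hemisphere also carries a horizontal normal. It is therefore singular and hence negligible.
\end{itemize}
Measurability of $\partial H_e(E)$ for Borel $E$ is part of the Aleksandrov theory and need not be reproved.
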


\begin{proof}
This proof is standard; see Pogorelov
\cite{Pogorelov1973,Pogorelov1978}.  One may first assume that $K$ is
smooth, as Huang-Liu \cite{HL21}, prove \eqref{eq:gnomonic-measure-invariant} and \eqref{eq:gnomonic-det}
and then pass to a general convex body by approximation, using weak
convergence of the Monge--Amp\`ere and surface-area measures.
\end{proof}

We shall also need the elementary measure-theoretic form of Legendre
duality.  If $\operatorname{MA}_u=f\,dx$, the Legendre-duality identity
of Figalli~\cite[Lemma~2.4]{Figalli2017} asserts that
\[
 \int_{\partial u^*(E)}f(x)\,dx=|E|
\]
whenever $E$ is Borel and $\partial u^*(E)$ lies in the domain on which
the identity is considered.  Thus, two-sided bounds for $f$ become reciprocal two-sided bounds for $\operatorname{MA}_{u^*}$.
The containment condition below is precisely the local version of that
domain requirement.  We record the resulting statement, and give a
self-contained proof, because it will be applied before smoothness of
the local potentials is known. No strict convexity is needed for this
reciprocal measure estimate; strict convexity enters separately below,
when the regularity theory is applied to the dual potential.

\begin{lemma}[Reciprocal bounds for the Legendre transform]
\label{lem:Legendre-reciprocal-MA}
Let $u:\mathbb R^d\to\mathbb R$ be finite and convex. Let
$U_0\Subset\mathbb R^d$ be open, and suppose that, for every Borel set
$A\subset U_0$,
\begin{equation}
 \lambda |A|
 \leq \operatorname{MA}_u(A)
 \leq \Lambda |A|,
 \qquad 0<\lambda\leq\Lambda<\infty.
 \label{eq:primal-MA-set-bounds}
\end{equation}
Let
\[
 u^*(p)=\sup_{x\in\mathbb R^d}\{p\cdot x-u(x)\}
\]
be the Legendre transform of $u$. Let
$V\Subset\operatorname{int}(\operatorname{dom}u^*)$ be open and assume
that
\begin{equation}
 \bigcup_{p\in\overline V}\partial u^*(p)\subset U_0.
 \label{eq:dual-subgradient-contained}
\end{equation}
Then, for every Borel set $E\subset V$,
\begin{equation}
 \frac{1}{\Lambda}|E|
 \leq \operatorname{MA}_{u^*}(E)
 \leq \frac{1}{\lambda}|E|.
 \label{eq:dual-MA-set-bounds}
\end{equation}
\end{lemma}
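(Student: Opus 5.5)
The plan is to reduce everything to the Fenchel--Young duality of subgradients,
\[
 x\in\partial u^*(p)\quad\Longleftrightarrow\quad p\in\partial u(x),
\]
which holds here because $u$ is finite, convex and hence lower semicontinuous, so $u^{**}=u$. Fix a Borel set $E\subset V$ and put $G:=\partial u^*(E)$. By \eqref{eq:dual-subgradient-contained}, $G\subset U_0$, and by definition $\operatorname{MA}_{u^*}(E)=|G|$. The goal is therefore to prove
\[
 \operatorname{MA}_u(G)=|E|.
\]
Once this identity is available, \eqref{eq:primal-MA-set-bounds} applied to $A=G$ gives $\lambda|G|\le|E|\le\Lambda|G|$, which is exactly \eqref{eq:dual-MA-set-bounds}.

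Measurability comes first. Since $\overline V$ is a compact subset of $\operatorname{int}(\operatorname{dom}u^*)$, the graph of $\partial u^*$ over $\overline V$ is compact. Hence $\partial u^*$ maps compact subsets of $\overline V$ to compact sets, and by the standard Aleksandrov argument $\partial u^*(E)$ is Lebesgue measurable for Borel $E$; this is the same fact that makes $\operatorname{MA}_{u^*}$ a measure. The hypothesis \eqref{eq:primal-MA-set-bounds}, stated for Borel $A\subset U_0$, extends to Lebesgue measurable $A\subset U_0$ by inner and outer regularity of both measures. So it may be applied to $A=G$.

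The core step compares $\partial u(G)$ with $E$.
\begin{itemize}
\item[(a)] $E\subset\partial u(G)$. If $p\in E\subset\operatorname{int}(\operatorname{dom}u^*)$, then $\partial u^*(p)\ne\emptyset$. Choosing any $x\in\partial u^*(p)$ gives $x\in G$ and, by duality, $p\in\partial u(x)$.
\item[(b)] $\partial u(G)\setminus E$ is Lebesgue null. Let $p\in\partial u(G)\setminus E$. Then $p\in\partial u(x)$ for some $x\in\partial u^*(p')$ with $p'\in E$. Duality gives $p'\in\partial u(x)$, and $p\ne p'$. Hence $\partial u(x)$ contains two points, so $u$ is not differentiable at $x$. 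Thus $\partial u(G)\setminus E\subset\partial u(N\cap U_0)$, where $N$ is the set of nondifferentiability points of $u$. By Rademacher (or Aleksandrov), $|N|=0$. The upper bound in \eqref{eq:primal-MA-set-bounds}, extended to measurable sets as above, gives
\[
 |\partial u(N\cap U_0)|=\operatorname{MA}_u(N\cap U_0)\le\Lambda|N\cap U_0|=0 .
\]
\end{itemize}
Combining (a) and (b) yields $\operatorname{MA}_u(G)=|\partial u(G)|=|E|$, and the estimate follows.

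The main obstacle is step (b). The null set $N$ lies in the $x$-variables, and the subgradient image of a null set is not null for a general convex function. The point is that $N$ must be intersected with $U_0$ before taking its image, so that the absolute continuity $\operatorname{MA}_u\le\Lambda\,dx$ on $U_0$ applies. That is exactly why the containment hypothesis \eqref{eq:dual-subgradient-contained} is needed. No strict convexity of $u$ enters the argument; only the upper bound $\Lambda$ is used to discard the multivalued part.
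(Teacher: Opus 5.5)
Your proof is correct and takes the same route the paper indicates: establish the Legendre-duality identity $\operatorname{MA}_u(\partial u^*(E))=|E|$, which the paper takes from Figalli's Lemma~2.4 rather than proving, and then apply the two-sided bounds to $A=\partial u^*(E)\subset U_0$. Your direct verification fills in the details the paper leaves to that citation: the inclusion $E\subset\partial u(G)$, the fact that the multivalued part lies in $\partial u(N\cap U_0)$ and is therefore null by the upper bound $\Lambda$, and the completeness argument needed because $G$ need not be Borel.
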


With these preparations at hand, we may now proceed, following
Caffarelli's approach \cite{CaffarelliLocalization1990,CaffarelliW2p1990},
to establish the regularity of all orders.

\subsection{Localization, strict convexity, and
$C^{1,\gamma}$ regularity}
\label{subsec:localization-C1gamma}

Assume that
\[
 \mathcal M_{\rho,K}=f\,d\sigma,
 \qquad
 0<f_-\leq f\leq f_+<\infty.
\]
Since \(\mathcal M_{\rho,K}\) is translation invariant, we may translate
\(K\) and fix $0<a\leq b<\infty$ such that
\begin{equation}
 B_a\subset K\subset B_b.
 \label{eq:in-out-balls}
\end{equation}
\begin{theorem}[Low regularity]
\label{thm:C1alpha-regularity}
Suppose that
\[
 \mathcal M_{\rho,K}=f\,d\sigma,
 \qquad
 0<f_-\leq f\leq f_+<\infty.
\]
Then $K$ is strictly convex,
\[
 h_K\in C^{1,\gamma}(\mathbb S^{n-1})
\]
for some $\gamma\in(0,1)$, and the local graph representations of
$\partial K$ belong to $C^{1,\gamma}$.

The exponent depends only on the dimension, the ratio $a/b$, and the
relevant ratios of the local Monge--Amp\`ere density bounds. After the
scale of $K$ is fixed, the local estimates depend on
$n,a,b,\lambda_K$, and $\Lambda_K$.
\end{theorem}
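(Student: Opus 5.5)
The plan is to reduce to the classical Caffarelli theory for the Minkowski problem with density bounded between two positive constants. First, by Corollary~\ref{cor:surface-area-comparison}, the hypothesis gives $S_K=s_K\,d\sigma$ with $\lambda_K\le s_K\le\Lambda_K$. Once this comparison is in hand, the nonlocal weight plays no further role in this step. Next, fix $e\in\mathbb S^{n-1}$ and use the gnomonic chart of Lemma~\ref{lem:gnomonic-MA}. On a bounded ball $U_0=\{|p|<R_0\}\subset e^\perp$ we get
\[
 \lambda_K(1+R_0^2)^{-(n+1)/2}|A|\le\operatorname{MA}_{H_e}(A)\le\Lambda_K|A|,\qquad A\subset U_0 .
\]
So $H_e$ is an Aleksandrov solution of $\det D^2H_e\simeq1$ on $U_0$, with constants depending only on $n,\lambda_K,\Lambda_K,R_0$. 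Finitely many directions $e$ and the choice $R_0=1$ cover the sphere, since the caps $v_e(\{|p|<1\})$ are open caps of radius $\pi/4$. The inclusion \eqref{eq:in-out-balls} gives $a\sqrt{1+|p|^2}\le H_e(p)\le b\sqrt{1+|p|^2}$, which controls the oscillation of the potential.

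The key step is strict convexity of $H_e$ on $U_0$, and I expect it to be the main obstacle. I will argue by contradiction with Caffarelli's localization theorem \cite{CaffarelliLocalization1990}. If the contact set $\{H_e=\ell\}$ of a supporting affine function $\ell$ contains more than one point, then it has no extremal points in the interior of the domain where the two-sided bound holds. So it must extend to the boundary of every chart. The chart can be taken as large as we like, with only the lower constant changing. Hence a nontrivial contact set propagates to infinity in $e^\perp$.

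Translate this back through Lemma~\ref{lem:gnomonic-subdifferential}. A segment on which $H_e$ is affine is a segment of normals $p+e$ sharing a common support point of $K$. Such a contact set reaching infinity would produce a single point $x\in\partial K$ whose normal cone contains a direction $\xi\perp e$, i.e.\ a great-circle arc of normals. That gives an atom-free but positive mass of $S_K$ missing, or else a singular concentration, which contradicts $s_K\ge\lambda_K>0$ near the equator of $e$. Concretely, I will instead change to a chart centred at the limiting normal, where the contact set is bounded and lies in the interior, contradicting Caffarelli. This is precisely Caffarelli's argument for the Minkowski problem, and the $2n$-dimensional apparatus is not needed. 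The same argument applied to the Legendre dual $-g_e$ uses Lemma~\ref{lem:Legendre-reciprocal-MA} to transfer the two-sided bounds to the graph function. The containment \eqref{eq:dual-subgradient-contained} holds because the subgradients of $g_e$ near a point are normals in a compact cap once $H_e$ is known to be $C^1$. This rules out corners, i.e.\ it gives $C^1$ of $h_K$, and it rules out flat faces of $K$, i.e.\ it gives strict convexity of $K$.

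Finally, strict convexity of $H_e$ together with the two-sided Monge--Amp\`ere bounds allows Caffarelli's interior $C^{1,\gamma}$ estimate \cite{CaffarelliRegularity1991} on sections compactly contained in $U_0$. This yields $H_e\in C^{1,\gamma}$ locally, with $\gamma$ depending on $n$ and the ratio of the density bounds. The size of sections is controlled by $a,b$. By $1$-homogeneity and the finite cover, $h_K\in C^{1,\gamma}(\mathbb S^{n-1})$. For the boundary graphs, apply the same interior estimate to the dual potential $g_e$, whose Monge--Amp\`ere measure has reciprocal bounds by Lemma~\ref{lem:Legendre-reciprocal-MA} and which is strictly convex because $H_e$ is $C^1$. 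This gives $C^{1,\gamma}$ local graph representations of $\partial K$. The dependence of constants stated in the theorem is then read off from these steps.
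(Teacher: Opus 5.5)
Your outline follows the paper's route: surface-area comparison, two-sided Aleksandrov bounds for $H_e$, localization for strict convexity, Caffarelli's $C^{1,\gamma}$ estimate, then the Legendre dual $H_e^*=-g_e$ for the boundary graphs. However, the step you yourself call the key one does not close as written.

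From localization on ever larger balls you correctly get that a nontrivial contact set is unbounded in $e^\perp$. So the normal cone $N(K,x)$ at the corresponding boundary point contains some $\xi\perp e$. In the chart centred at $\xi$, the contact set is the section $\{p\in\xi^\perp: p+\xi\in N(K,x)\}$. This is bounded only if $\eta\cdot\xi>0$ for every nonzero $\eta\in N(K,x)$, and nothing guarantees that. Take an edge point with $N(K,x)=\operatorname{cone}\{e,\xi\}$, $e\perp\xi$. In the $\xi$-chart the section is the ray $\{se:s\ge0\}$, just as it was the ray $\{s\xi:s\ge0\}$ in the $e$-chart. Recentring at the limiting normal only swaps the roles and never produces a compact contact set. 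The fallback sentence about missing mass of $S_K$ is not an argument either. A single boundary point with a lower-dimensional normal cone carries a $\sigma$-null set of normals, so it is not excluded by $s_K\ge\lambda_K$.

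The missing idea is a chart adapted to the point. This is where the paper uses \eqref{eq:in-out-balls}, which you only use to bound the oscillation of $H_e$. Let $x\in\partial K$ and let $\eta$ be a unit normal at $x$. Then $x\cdot\eta=h_K(\eta)\ge a$ and $|x|\le b$, so $\eta\cdot(x/|x|)\ge a/b$. In the chart $e=x/|x|$, the contact set of the supporting affine function attached to $x$ therefore lies in $\{|p|\le\sqrt{b^2/a^2-1}\}$. That set is compactly inside a ball where the two-sided bounds hold, and Caffarelli's localization forces it to be a point; this is the paper's ``compact gnomonic subchart''.

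Alternatively, you can keep your $e$-chart but use localization at full strength. A nontrivial closed convex contact set with no extremal points anywhere in $e^\perp$ contains a line $\{p_0+s\zeta\}$. Then $\pm\zeta\in N(K,x)$, which forces $K$ into a hyperplane.

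Two smaller corrections:
\begin{enumerate}
\item Excluding nontrivial contact sets of $H_e$ rules out corners of $K$ and gives $C^1$ boundary graphs, not $C^1$ of $h_K$. Regularity $h_K\in C^1$ corresponds instead to excluding faces.
\item The separate localization for $-g_e$ is unnecessary. Once $H_e\in C^{1,\gamma}$, Lemma~\ref{lem:gnomonic-subdifferential} makes every support face a singleton. This is how the paper obtains strict convexity of $K$ before passing to the dual potential.
\end{enumerate}
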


\begin{proof}
This is the standard low-regularity consequence of the local
Aleksandrov theory, once the adaptive equation has been reduced to the
surface-area equation.  We indicate the reduction needed here.  By
Corollary~\ref{cor:surface-area-comparison},
\[
 \lambda_K\,\sigma\leq S_K\leq\Lambda_K\,\sigma.
\]
Consequently, \eqref{eq:gnomonic-MA} gives, on every bounded
\(U\subset e^\perp\),
\[
 \lambda_K(1+R_U^2)^{-(n+1)/2}|A|
 \leq \operatorname{MA}_{H_e}(A)
 \leq \Lambda_K|A|,
 \qquad A\subset U,
\]
where \(R_U:=\sup_{p\in U}|p|\).  The inradius and outradius bounds
\eqref{eq:in-out-balls} place the normal cone at every boundary point
in a compact gnomonic subchart.  Caffarelli's localization theorem
therefore excludes a nontrivial compact contact set of a supporting
affine function of \(H_e\) \cite{CaffarelliLocalization1990}.  Thus every
boundary point has a unique exterior normal, and \(\partial K\) is
\(C^1\).

The face--subdifferential correspondence in
Lemma~\ref{lem:gnomonic-subdifferential} then converts this uniqueness
of normal into strict convexity of every \(H_e\).  Caffarelli's interior
regularity theorem applied to the preceding two-sided measure bounds
therefore yields \(H_e\in C^{1,\gamma}_{\mathrm{loc}}(e^\perp)\)
\cite{CaffarelliRegularity1991}.  A finite cover by gnomonic charts gives
\(h_K\in C^{1,\gamma}(\mathbb S^{n-1})\).  Since
\(\partial H_e(p)\) is then a singleton, the same correspondence shows
that every support face of \(K\) is a singleton; hence \(K\) is strictly
convex.

Finally, \(H_e^*=-g_e\) identifies the Legendre transform with a local
boundary graph.  The reciprocal Legendre estimate transfers the local
two-sided Monge--Amp\`ere bounds to \(H_e^*\).  Since \(H_e\) is
differentiable, \(H_e^*\) is strictly convex on the interior of its
domain; a second application of
the same interior regularity theorem gives
\(H_e^*\in C^{1,\gamma}_{\mathrm{loc}}\).  Thus all local boundary
graphs have this regularity, and a finite covering gives a common
exponent.  This proves the theorem.
\end{proof}

\subsection{The nonlocal Monge--Amp\`ere equation and the two-step
bootstrap}
\label{subsec:nonlocal-MA-bootstrap}

Assume that
\[
 \mathcal M_{\rho,K}=f\,d\sigma,
 \qquad
 0<f_-\leq f\leq f_+<\infty.
\]
By the low-regularity theorem, $K$ is strictly convex and
\[
h_K\in C^{1,\gamma}(\mathbb S^{n-1})
\]
for some $\gamma\in(0,1)$. Hence the reverse Gauss map $X_K$ is
single-valued and belongs to $C^{0,\gamma}$.

We first derive the density form of the nonlocal Minkowski equation.
Set
\[
 W_K(u)
 =
 2q_{r,K}(X_K(u))+\beta_{\rho,K}.
\]
For every bounded Borel function $\varphi$ on $\mathbb S^{n-1}$,
the definitions of $\mathcal M_{\rho,K}$ and $S_K$ give
\begin{align*}
 \int_{\mathbb S^{n-1}}\varphi(u)\,
 d\mathcal M_{\rho,K}(u)
 &=
 \int_{\partial K}
 \varphi(\nu_K(x))
 \bigl(2q_{r,K}(x)+\beta_{\rho,K}\bigr)\,
 d\mathcal H^{n-1}(x)\\
 &=
 \int_{\mathbb S^{n-1}}
 \varphi(u)W_K(u)\,dS_K(u).
\end{align*}
The comparison theorem proved above gives
\[
 S_K=s_K\,d\sigma.
\]
Since $\mathcal M_{\rho,K}=f\,d\sigma$, uniqueness of
Radon--Nikodym derivatives yields
\[
 f(u)=W_K(u)s_K(u)
 \quad\text{for $\sigma$-almost every }u\in\mathbb S^{n-1}.
\]
Consequently,
\[
 s_K(u)
 =
 \frac{f(u)}
 {2q_{r,K}(X_K(u))+\beta_{\rho,K}}
 \quad\text{for $\sigma$-almost every }u.
\]
Thus $h_K$ solves
\begin{equation}
 \det\bigl(\nabla_{\mathbb S}^2h_K+h_KI\bigr)
 =
 \frac{f(u)}
 {2q_{r,K}\bigl(\nabla_{\mathbb S}h_K+h_Ku\bigr)
  +\beta_{\rho,K}}
 \label{eq:nonlocal-spherical-MA}
\end{equation}
in the Aleksandrov sense. Equivalently, in each gnomonic chart, the
corresponding convex potential satisfies the associated Euclidean
Monge--Amp\`ere equation.

\begin{theorem}[Unconditional $C^{2,\alpha}$ regularity]
\label{thm:C2alpha-regularity}
Let $0<\alpha<1$. Suppose that
\[
 \mathcal M_{\rho,K}=f\,d\sigma,
 \qquad
 f\in C^\alpha(\mathbb S^{n-1}),
 \qquad
 f>0.
\]
Then
\begin{equation}
 h_K\in C^{2,\alpha}(\mathbb S^{n-1}),
 \qquad
 \nabla_{\mathbb S}^2h_K+h_KI>0
 \quad\text{on }\mathbb S^{n-1}.
 \label{eq:C2alpha-conclusion}
\end{equation}
In particular, $K$ is of class $C^{2,\alpha}_+$.
\end{theorem}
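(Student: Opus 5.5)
The plan is a two-step bootstrap in gnomonic charts, starting from \Cref{thm:C1alpha-regularity}. Since $f\in C^\alpha$ and $f>0$ on the compact sphere, $f$ is bounded between $f_-=\min f>0$ and $f_+=\max f$. The low-regularity theorem therefore applies: $K$ is strictly convex, $h_K\in C^{1,\gamma}(\Sn)$, and the reverse Gauss map $X_K=\nabla_{\mathbb S}h_K+h_Ku$ is single-valued and $C^{0,\gamma}$. By \eqref{eq:nonlocal-spherical-MA}, $h_K$ solves, in the Aleksandrov sense,
\[
 \det(\nabla_{\mathbb S}^2h_K+h_KI)=F(u):=\frac{f(u)}{2q_{r,K}(X_K(u))+\beta_{\rho,K}} .
\]
The key structural point is that the right-hand side is a composition of known objects. $q_{r,K}$ is globally Lipschitz by \eqref{eq:q-Lipschitz}, and the denominator is bounded below by $2\tau_K^nV$ by \Cref{prop:weight-bounds}. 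Hence the regularity of $F$ is controlled by the regularity of $X_K$, with no loss beyond that coming from $f$.

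\emph{Step 1 (first bootstrap).} Since $X_K\in C^{0,\gamma}$ and $q_{r,K}$ is Lipschitz, $q_{r,K}\circ X_K\in C^{0,\gamma}$. Hence $F\in C^{\min\{\alpha,\gamma\}}$ and $F$ is positive. In a gnomonic chart, \Cref{lem:gnomonic-MA} gives
\[
 \det D^2H_e(p)=F(v_e(p))(1+|p|^2)^{-(n+1)/2},
\]
and the right-hand side is positive and locally Hölder. $H_e$ is already strictly convex: this is obtained in the proof of \Cref{thm:C1alpha-regularity} from uniqueness of normals via \Cref{lem:gnomonic-subdifferential}. So Caffarelli's interior $C^{2,\beta}$ estimate for strictly convex Aleksandrov solutions with Hölder right-hand side \cite{CaffarelliRegularity1991} gives $H_e\in C^{2,\beta}_{\rm loc}$ with $\beta=\min\{\alpha,\gamma\}$. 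Covering the sphere by finitely many charts then gives $h_K\in C^{2,\beta}(\Sn)$. Positivity of the determinant, together with the $C^2$ bound, gives $\nabla_{\mathbb S}^2h_K+h_KI>0$.

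\emph{Step 2 (second bootstrap).} Now $X_K=\nabla_{\mathbb S}h_K+h_Ku\in C^{1,\beta}$, and in particular $X_K$ is Lipschitz. Composing with the Lipschitz function $q_{r,K}$ gives $q_{r,K}\circ X_K\in C^{0,1}\subset C^{\alpha}$. Therefore $F\in C^\alpha$, and applying Caffarelli's estimate again in each chart upgrades the regularity to $h_K\in C^{2,\alpha}(\Sn)$ with the full exponent $\alpha$. Positive definiteness carries over from Step 1.

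\emph{Main obstacle.} The delicate point is Step 1. We must make sure that Caffarelli's $C^{2,\beta}$ theory applies to the Aleksandrov solution $H_e$ on a chart, which requires strict convexity of $H_e$ in the interior and the local two-sided density bounds. The plan is to take both from the proof of \Cref{thm:C1alpha-regularity}, restricted to a compact subchart containing the normals of interest. A second subtlety is that the exponent in Step 1 may be only $\gamma<\alpha$, which is exactly why the second pass is needed. That second pass relies only on the Lipschitz bound \eqref{eq:q-Lipschitz}, and not on any differentiability of $q_{r,K}$. This is why no transversality hypothesis is required at the $C^{2,\alpha}$ level, whereas higher regularity will need $\Theta_{r_K}(K)>0$.
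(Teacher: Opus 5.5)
Your proposal follows the paper's proof essentially step for step: the low-regularity input, a first Caffarelli bootstrap with exponent $\min\{\alpha,\gamma\}$ obtained by composing the Lipschitz function $q_{r,K}$ with $X_K\in C^{0,\gamma}$, positivity of the curvature matrix, and a second pass that uses only that $X_K$ is Lipschitz to reach the full exponent $\alpha$. One point to tighten: $\nabla_{\mathbb S}^2h_K+h_KI>0$ follows from $\det(\nabla_{\mathbb S}^2h_K+h_KI)>0$ together with $\nabla_{\mathbb S}^2h_K+h_KI\ge0$, which holds because $h_K$ is a support function (equivalently, $D^2H_e\ge0$ by convexity); the $C^2$ bound alone does not give this, since a positive determinant does not exclude pairs of negative eigenvalues.
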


\begin{proof}
Since $f$ is positive and continuous on the compact sphere,
\[
 0<\min_{\mathbb S^{n-1}}f
 \leq f
 \leq\max_{\mathbb S^{n-1}}f<\infty.
\]
The low-regularity theorem therefore applies. Let
\[
 h_K\in C^{1,\gamma}(\mathbb S^{n-1}),
 \qquad
 \delta=\min\{\alpha,\gamma\}.
\]

\medskip
\noindent
\textit{Step 1: the first bootstrap.}
The global Lipschitz estimate for $q_{r,K}$ and
$X_K\in C^{0,\gamma}$ imply
\begin{align*}
 |q_{r,K}(X_K(u))-q_{r,K}(X_K(v))|
 &\leq \operatorname{Lip}(q_{r,K})
       |X_K(u)-X_K(v)|\\
 &\leq C|u-v|^\gamma.
\end{align*}
Hence
\[
 W_K=2q_{r,K}\circ X_K+\beta_{\rho,K}
 \in C^{0,\gamma}(\mathbb S^{n-1}).
\]
The lower bound for the boundary weight gives
\begin{equation}
 W_K(u)\geq 2\tau_K^n|K|>0
 \qquad\text{for every }u\in\mathbb S^{n-1}.
 \label{eq:WK-positive-lower-bound}
\end{equation}
Therefore
\[
 W_K^{-1}\in C^{0,\gamma},
 \qquad
 \frac{f}{W_K}\in C^{0,\delta}.
\]

In a gnomonic chart, equation
\eqref{eq:nonlocal-spherical-MA} has the form
\[
 \det D^2H_e=F_e,
\]
where, on every compact subchart,
\[
 F_e\in C^{0,\delta},
 \qquad
 0<c\leq F_e\leq C<\infty.
\]
The potential $H_e$ is strictly convex by the low-regularity theorem.
Caffarelli's interior $C^{2,\delta}$ estimate for the
Monge--Amp\`ere equation therefore gives
\[
 H_e\in C_{\mathrm{loc}}^{2,\delta}.
\]
A finite collection of compact hemisphere subcharts yields
\begin{equation}
 h_K\in C^{2,\delta}(\mathbb S^{n-1}).
 \label{eq:first-bootstrap}
\end{equation}

\medskip
\noindent
\textit{Step 2: positivity of the curvature matrix.}
Set
\[
 Q(u)=\nabla_{\mathbb S}^2h_K(u)+h_K(u)I.
\]
Since $h_K$ is a support function,
\[
 Q(u)\geq0.
\]
By \eqref{eq:first-bootstrap}, $Q$ is continuous. The right-hand side
of \eqref{eq:nonlocal-spherical-MA} is also continuous. The
Aleksandrov equation therefore implies, first almost everywhere and
then everywhere by continuity, that
\[
 \det Q(u)=\frac{f(u)}{W_K(u)}>0.
\]
If $\lambda_1(u),\ldots,\lambda_{n-1}(u)$ are the eigenvalues of
$Q(u)$, then
\[
 \lambda_i(u)\geq0,
 \qquad
 \prod_{i=1}^{n-1}\lambda_i(u)=\det Q(u)>0.
\]
Thus
\begin{equation}
 Q(u)>0
 \qquad\text{for every }u\in\mathbb S^{n-1}.
 \label{eq:curvature-matrix-positive}
\end{equation}

\medskip
\noindent
\textit{Step 3: the second bootstrap.}
Equation \eqref{eq:first-bootstrap} and the standard identity
\(X_K(u)=\nabla_{\mathbb S}h_K(u)+h_K(u)u\) imply
\[
 X_K\in C^{1,\delta}(\mathbb S^{n-1}).
\]
Since the sphere is compact, $X_K$ is Lipschitz. Hence
\[
 q_{r,K}\circ X_K\in C^{0,1}(\mathbb S^{n-1})
\]
and therefore
\[
 W_K\in C^{0,1},
 \qquad
 W_K^{-1}\in C^{0,1},
\]
where the second assertion also uses
\eqref{eq:WK-positive-lower-bound}. Since $0<\alpha<1$,
\[
 C^{0,1}(\mathbb S^{n-1})
 \subset C^\alpha(\mathbb S^{n-1}).
\]
Consequently,
\[
 \frac{f}{W_K}\in C^\alpha(\mathbb S^{n-1}).
\]
Applying the interior $C^{2,\alpha}$ estimate in the gnomonic charts
and using a finite covering of the sphere, we obtain
\[
 h_K\in C^{2,\alpha}(\mathbb S^{n-1}).
\]
Together with \eqref{eq:curvature-matrix-positive}, this proves
\eqref{eq:C2alpha-conclusion}.
\end{proof}

\begin{remark}[Why the bootstrap stops at two derivatives]
\label{rem:hard-cutoff-caveat}
Without an additional geometric assumption, one only has
\[
 q_{r,K}\in C^{0,1}(\mathbb R^n).
\]
The quantity relevant to \eqref{eq:nonlocal-spherical-MA} is not the
ambient regularity of $q_{r,K}$, but that of its pullback
\[
 q_{r,K}\circ X_K
 \quad\text{on }\mathbb S^{n-1},
\]
which occurs in $W_K=2q_{r,K}\circ X_K+\beta_{\rho,K}$.  At this
stage $K$ is of class $C^{2,\alpha}_+$, and $X_K$ is a
$C^{1,\alpha}$ diffeomorphism from the sphere onto $\partial K$.
Thus the issue is precisely the regularity of
$q_{r,K}|_{\partial K}$ in boundary coordinates.  The ambient
Lipschitz estimate does not by itself rule out a smoother boundary
restriction; this can happen in special symmetric situations.  What
is missing without an additional hypothesis is a general guarantee of
such higher boundary regularity.

The need to isolate a boundary regularity statement for the nonlocal
factor also occurs in the chord log--Minkowski problem.  Hu, Huang, and
Lu establish boundary regularity for a Riesz potential in order to run
the regularity theory for the associated nonlocal Monge--Amp\`ere
equation \cite{HuHuangLu2023}.  Their power-law kernel and its
regularity mechanism are different from the hard cutoff used here, so
that result does not provide an alternative proof in the present
setting.  It nevertheless emphasizes that the regularity of a nonlocal
potential along the moving boundary must be supplied as a separate
analytic input.

The possible obstruction again comes from the hard cutoff
$\chi_{B_r}$.  If $\partial B_r(x)$ is tangent to $\partial K$ at
$y$, and the tangency is crossed by a tangential variation of
$x\in\partial K$ (equivalently,
$D_x(|x-y|^2-r^2)[\xi]\ne0$ for some
$\xi\in T_x\partial K$), then the associated cap is created or
destroyed along the boundary.  At a nondegenerate quadratic tangency,
its volume contributes a term of order
\[
 c(t_+)^{(n+1)/2}
\]
to $q_{r,K}|_{\partial K}$, where $t$ is a signed local boundary
parameter.  This term is not $C^\infty$ across $t=0$.  Consequently,
the preceding $C^{2,\alpha}$ bootstrap cannot be iterated under the
present hypotheses alone.  The uniform transversality condition in the
next subsection rules out precisely this mechanism and supplies the
needed regularity of $q_{r,K}|_{\partial K}$; in the saturated regime,
$q_{r,K}$ is constant on $K$, so the obstruction is absent.
\end{remark}

\subsection{Higher regularity under uniform chord transversality}
\label{subsec:higher-regularity-transversality}

Assume in this subsection that $K\in C^2_+$ and suppose that
\[
 0<r_K<\operatorname{diam}K.
\]
The incidence set in the following definition is nonempty. Indeed,
choose \(a,b\in\partial K\) with
\(|a-b|=\operatorname{diam}K\). Since \(\partial K\) is path connected,
the continuous function \(y\mapsto|y-a|\) assumes the value \(r_K\) along
a path in \(\partial K\) joining \(a\) to \(b\).
Define
\begin{align}
 \Theta_{r_K}(K)
 &=
 \inf_{\substack{x,y\in\partial K\\|x-y|=r_K}}
 \left|
 \Pi_{T_y\partial K}\frac{y-x}{r_K}
 \right|
 \notag\\
 &=
 \inf_{\substack{x,y\in\partial K\\|x-y|=r_K}}
 \sqrt{
 1-
 \left(
 \nu_K(y)\cdot\frac{y-x}{r_K}
 \right)^2
 }.
 \label{eq:transversality-number}
\end{align}
Here is the geometric content of this condition.  Whenever
\[
 x\in\partial K,
 \qquad
 y\in\partial K\cap\partial B_{r_K}(x),
 \qquad
 w=\frac{y-x}{r_K},
\]
the vector $w$ is the unit normal to $\partial B_{r_K}(x)$ at $y$, whereas
$\nu_K(y)$ is the outer unit normal to $\partial K$.  Consequently,
\begin{equation}
 \left|\Pi_{T_y\partial K}w\right|
 =
 \sqrt{1-\bigl(\nu_K(y)\cdot w\bigr)^2}.
 \label{eq:angle-projection-identity}
\end{equation}
If $\alpha(x,y)\in[0,\pi/2]$ denotes the acute angle between the
tangent hyperplanes of $\partial K$ and $\partial B_{r_K}(x)$ at $y$,
then the quantity in \eqref{eq:angle-projection-identity} is
$\sin\alpha(x,y)$.  Thus $\Theta_{r_K}(K)>0$ says precisely that every
such intersection is transverse and, more quantitatively,
\[
 \alpha(x,y)\geq\arcsin\Theta_{r_K}(K).
\]
In particular, the sphere cannot merely touch $\partial K$ tangentially
at an intersection point.

The next lemma has the same structural role as the boundary-regularity
theorem for Riesz potentials of Hu, Huang, and Lu
\cite{HuHuangLu2023}: it supplies regularity of the nonlocal factor
after restriction to the moving boundary, thereby enabling a
higher-order bootstrap for a nonlocal Monge--Amp\`ere equation.  The
two settings differ in a decisive way.  Hu--Huang--Lu treat a power-law
kernel, whereas $q_{r_K,K}$ is generated by a sharp cutoff.  Accordingly,
the present boundary regularity is obtained under the uniform
chord-transversality condition $\Theta_{r_K}(K)>0$, which excludes
tangency between the moving sphere and $\partial K$. 

\begin{lemma}[Parameter-dependent intersection volumes]
\label{lem:parameter-intersection}
Let $m\geq2$ and $0<\alpha<1$. Suppose that
$\partial K\in C^{m,\alpha}$. If
\[
 0<r_K<\operatorname{diam}K,
 \qquad
 \Theta_{r_K}(K)>0,
\]
then
\begin{equation}
 q_{r_K,K}|_{\partial K}\in C^{m-1,\alpha}(\partial K).
 \label{eq:q-higher-regularity}
\end{equation}
On a class of bodies admitting a uniformly controlled finite
$C^{m,\alpha}$ atlas with a common tubular-neighborhood radius, and
satisfying
\[
 \Theta_{r_K}(K)\geq\theta>0,
\]
one has a uniform bound
\begin{equation}
 \|q_{r_K,K}\|_{C^{m-1,\alpha}(\partial K)}\leq C,
 \label{eq:q-higher-uniform-estimate}
\end{equation}
where $C$ depends only on $n,m,\alpha,r_K,\theta$ and the prescribed
geometric bounds.
\end{lemma}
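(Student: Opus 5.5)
The plan is to write $q_{r,K}(x)=|K\cap B_r(x)|$ as a function of the center $x$ and differentiate under the integral sign by moving-domain (Reynolds) formulas. The key identity is, for $x\in\mathbb R^n$ near $\partial K$,
\[
 \nabla_x q_{r,K}(x)=\int_{K\cap\partial B_r(x)}\frac{y-x}{r}\,d\mathcal H^{n-1}(y).
\]
This holds because translating the ball moves only the spherical part of $\partial(K\cap B_r(x))$, with normal velocity $\xi\cdot(y-x)/r$. I would first justify this formula, for all $x$ in a tubular neighbourhood $N$ of $\partial K$, from the Lipschitz bound \eqref{eq:q-Lipschitz} together with a.e. differentiability and a Gauss--Green computation. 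The formula shows that $q_{r,K}$ is $C^1$ as soon as the integral on the right is continuous in $x$. Regularity is thus reduced to the regularity in $x$ of the parameter-dependent surface integral
\[
 G(x)=\int_{\Sigma_x}\frac{y-x}{r}\,d\mathcal H^{n-1}(y),\qquad \Sigma_x:=K\cap\partial B_r(x).
\]
I would then show $G\in C^{m-2,\alpha}$ on $N$. Restricting $q$ to $\partial K$, which is a $C^{m,\alpha}$ manifold, then gives $q_{r,K}|_{\partial K}\in C^{m-1,\alpha}$.

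For the regularity of $G$, I would parametrize $\partial B_r(x)=x+r\mathbb S^{n-1}$ by $\omega\in\mathbb S^{n-1}$, so that
\[
 G(x)=r^{n-1}\int_{\{\omega:\;x+r\omega\in K\}}\omega\,d\sigma(\omega).
\]
The domain is $\Omega_x=\{\omega:\phi(x+r\omega)\le0\}$, where $\phi$ is a $C^{m,\alpha}$ defining function of $K$ near $\partial K$, for instance the signed distance. The boundary $\partial\Omega_x$ on the sphere is the set where $\phi(x+r\omega)=0$. Its spherical gradient is the tangential projection of $\nabla\phi(x+r\omega)$ onto $T_\omega\mathbb S^{n-1}$, and this projection has size comparable to $|\Pi_{T_y\partial K}\,\omega|$, the quantity controlled by $\Theta_{r_K}(K)$. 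For $x\in\partial K$ the assumption gives a uniform lower bound $\theta$. By continuity the bound degrades only to $\theta/2$ for $x$ in a small neighbourhood of $\partial K$; the size of this neighbourhood is controlled by the $C^2$ bounds of $\partial K$.

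The implicit function theorem then shows that $\partial\Omega_x$ is a $C^{m,\alpha}$ hypersurface of the sphere that depends $C^{m,\alpha}$ on the parameter $x$, and that $\Omega_x$ is locally diffeomorphic to a fixed model via a $C^{m,\alpha}$ family of diffeomorphisms $\Psi_x$ of the sphere. I would construct $\Psi_x$ by flowing along the normalized spherical gradient field
\[
 \nabla_\omega\phi/|\nabla_\omega\phi|^2,
\]
which has regularity $C^{m-1,\alpha}$. Pulling $G$ back through $\Psi_x$ turns it into an integral over a fixed domain whose integrand involves the Jacobian $D\Psi_x$. The integrand is $C^{m-2,\alpha}$ in $x$, so $G\in C^{m-2,\alpha}$ and $q\in C^{m-1,\alpha}$ near $\partial K$.

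The step I expect to be the main obstacle is avoiding a loss of derivatives in this construction. One derivative of $\phi$ enters the transversal vector field, and another enters its Jacobian. I will first check whether this precisely yields $C^{m-1,\alpha}$ for $q$ and not one derivative less. If it does not, the fallback is a second differentiation of the Reynolds formula, using the explicit divergence structure of the integrand $\omega$ on the sphere, for example by writing $\omega=\nabla_{\mathbb R^n}\tfrac12|\omega|^2$ and applying Stokes on $\Omega_x$. This transfers the $x$-derivative to a boundary integral over $\partial\Omega_x$ with weight $1/|\nabla_\omega\phi|\le 2/\theta$, so that each derivative costs only the regularity of $\partial K$.

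The uniform estimate \eqref{eq:q-higher-uniform-estimate} should follow by tracking constants. The size of the neighbourhood, the implicit-function bounds and the Jacobians all depend only on $n$, $m$, $\alpha$, $r_K$, $\theta$, the $C^{m,\alpha}$ atlas norms and the tubular radius.
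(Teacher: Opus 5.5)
Your proposal is correct, and it takes a different route from the paper. The paper also obtains your first-derivative formula: its Step~1 mollifies $\kai_K$ and uses transversality to show that the transition strip on the moving sphere has $\mathcal H^{n-1}$-measure $O(\varepsilon)$. It does not build on that formula, however. For the $C^{m-1,\alpha}$ statement it works with $Q(\xi)=q_{r_K,K}(\Psi(\xi))$ itself. Near the incidence set $\mathcal I$ it straightens \emph{both} boundaries with the joint coordinates $(\xi,y)\mapsto(\xi,\varphi(y),|y-\Psi(\xi)|^2-r_K^2,z(y))$. This turns the two indicators into the fixed quadrant $\{s<0,\ t<0\}$, so $\xi$ enters only through a $C^{m-1,\alpha}$ Jacobian. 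The regions where at most one boundary is active are then handled by a partition of unity and a three-case analysis.

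You instead differentiate once and freeze the sphere via $y=x+r\omega$. The corner $\partial K\cap\partial B_r(x)$ of the lens then becomes the smooth boundary of a single moving domain $\Omega_x\subset\Sn$. So only one hypersurface has to be straightened, on a fixed compact manifold, with no corner geometry and no case distinction.

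The price is twofold. You need the Reynolds formula on an open set; your Rademacher/Gauss--Green route works, or simply $\nabla q_{r,K}=\kai_K*\nabla\kai_{B_r}$ in the sense of distributions. You also need the transversality bound $\theta/2$ on an ambient neighbourhood of $\partial K$, which your compactness argument supplies. The gain is regularity of $q_{r,K}$ on that whole neighbourhood, not only along $\partial K$. Conversely, the paper's joint straightening never leaves $\partial K$ and never uses that the ball moves rigidly, so it would also handle deforming families of sets.

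Your derivative count is right, and the loss you feared does not occur. The flow map is $C^{m-1,\alpha}$, not $C^{m,\alpha}$ as you first wrote. To carry $\Omega_{x_0}$ onto $\Omega_x$, run the flow of the field built from $\phi(x+r\,\cdot)$ for time $\phi(x_0+r\omega)-\phi(x+r\omega)$. Its Jacobian costs one more derivative, so $G\in C^{m-2,\alpha}$, hence $q_{r,K}\in C^{m-1,\alpha}$, and the Stokes fallback is unnecessary.

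In fact your reduction shows there is slack in both proofs. Write $\partial\Omega_x$ locally as an implicit-function graph $\omega_1=b(x,\omega')$ with $b\in C^{m,\alpha}$. Then $\int\chi\,\kai_{\Omega_x}\,\omega\,d\sigma=\int A(\omega',b(x,\omega'))\,d\omega'$ with $A$ smooth. This gives $G\in C^{m,\alpha}$, so $q_{r,K}\in C^{m+1,\alpha}$ near $\partial K$ and $q_{r,K}|_{\partial K}\in C^{m,\alpha}$. The derivative lost to a Jacobian is an artifact of the method, not of the problem.
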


\begin{proof}

Fix $x_0\in\partial K$, choose a $C^{m,\alpha}$ parametrization
\[
 \Psi:U\longrightarrow\partial K,
 \qquad
 \Psi(\xi_0)=x_0,
\]
and fix $U_0\Subset U$.  We prove that
\[
 Q(\xi):=q_{r_K,K}(\Psi(\xi))
 =
 \int_{\mathbb R^n}
 \kai_K(y)\kai_{\{|y-\Psi(\xi)|<r_K\}}\,dy
\]
belongs to $C^{m-1,\alpha}(U_0)$.

\medskip
\noindent
\textit{Step 1: mollification and the first derivative.}
We first establish the $C^1$ assertion by smoothing only the fixed
cutoff.  Let $\eta_\varepsilon$ be a standard mollifier supported in
$B_\varepsilon$, and set
\[
 \phi_\varepsilon=\kai_K*\eta_\varepsilon,
 \qquad
 Q_\varepsilon(\xi)
 :=
 \int_{B_{r_K}(\Psi(\xi))}\phi_\varepsilon(y)\,dy
 =
 \int_{B_{r_K}(0)}
 \phi_\varepsilon(\Psi(\xi)+z)\,dz.
\]
Then $Q_\varepsilon$ is smooth.  For
$\zeta\in\mathbb R^{n-1}$ and $x=\Psi(\xi)$, differentiation under the
integral sign, followed by the divergence theorem, gives
\begin{align*}
 DQ_\varepsilon(\xi)[\zeta]
 &=
 \int_{B_{r_K}(0)}
 \nabla\phi_\varepsilon(\Psi(\xi)+z)
 \cdot D\Psi(\xi)[\zeta]\,dz \notag\\
 &=
 \int_{\partial B_{r_K}(x)}
 \phi_\varepsilon(y)\,
 D\Psi(\xi)[\zeta]\cdot\frac{y-x}{r_K}\,
 d\mathcal H^{n-1}(y).
\end{align*}
Thus the differentiation is performed only for the smooth function
$\phi_\varepsilon$, never directly on the sharp cutoff $\kai_K$.

The expected limiting derivative is
\[
 G(\xi)[\zeta]
 :=
 \int_{K\cap\partial B_{r_K}(\Psi(\xi))}
 D\Psi(\xi)[\zeta]\cdot
 \frac{y-\Psi(\xi)}{r_K}\,d\mathcal H^{n-1}(y).
\]
We show that $Q_\varepsilon\to Q$ and
$DQ_\varepsilon\to G$ uniformly on $U_0$.  The first convergence is
immediate:
\begin{equation}
 \sup_{\xi\in U_0}|Q_\varepsilon(\xi)-Q(\xi)|
 \leq
 \|\phi_\varepsilon-\kai_K\|_{L^1(\mathbb R^n)}
 \longrightarrow0.
 \label{eq:parameter-intersection-function-convergence}
\end{equation}

It remains to compare the derivatives.  Here $\xi$ moves the centre
$\Psi(\xi)$, whereas $y$ is only the integration variable.  The
derivative error can occur only when the moving sphere passes near
$\partial K$, so we first control this part of the sphere.  On a fixed
tubular neighborhood of $\partial K$, let $\varphi$ be the signed-distance
defining function, negative in $K$.  Thus
\(
\nabla\varphi=\nu_K
\) on $\partial K$ and
\(
|\varphi|=\operatorname{dist}(\,\cdot\,,\partial K)
\) in this neighborhood.  Set
\[
 \mathcal I
 :=
 \left\{(\xi,y)\in\overline{U_0}\times\mathbb R^n:
 y\in\partial K\cap\partial B_{r_K}(\Psi(\xi))\right\}.
\]
At $(\xi,y)\in\mathcal I$, let
\[
 w=\frac{y-\Psi(\xi)}{r_K}.
\]
\[
 \begin{aligned}
 \nabla_{\partial B_{r_K}(\Psi(\xi))}\varphi(y)
 &=\nu_K(y)-\bigl(\nu_K(y)\cdot w\bigr)w,\\
 \left|\nabla_{\partial B_{r_K}(\Psi(\xi))}\varphi(y)\right|
 &=
 \left|\Pi_{T_y\partial K}w\right|
 \geq\Theta_{r_K}(K)>0.
 \end{aligned}
\]
Thus $\Theta_{r_K}(K)>0$ is exactly a uniform regular-value condition for
the restriction of $\varphi$ to every moving sphere at its zero set.
Writing a point of that sphere as $y=\Psi(\xi)+r_K\omega$ with
$\omega\in\mathbb S^{n-1}$, continuity and compactness give a number
$\delta>0$ such that
\[
 \left|\nabla_{\partial B_{r_K}(\Psi(\xi))}\varphi(y)\right|
 \geq\frac12\Theta_{r_K}(K)
 \quad\text{whenever}\quad
 \xi\in\overline{U_0},\quad
 y\in\partial B_{r_K}(\Psi(\xi)),\quad
 |\varphi(y)|\leq\delta.
\]
Indeed, a failure of this statement would yield, after passing to a
convergent subsequence, a point of $\mathcal I$ contradicting the
preceding lower bound.  A finite
coordinate cover of this compact family of spheres now gives
\[
 \sup_{\substack{\xi\in\overline{U_0}\\ |t|\leq\delta}}
 \mathcal H^{n-2}
 \bigl(\partial B_{r_K}(\Psi(\xi))\cap\{\varphi=t\}\bigr)
 \leq C_{U_0}.
\]
In fact, in each chart the preceding tangential-gradient lower bound makes
one coordinate derivative of $\varphi$ nonzero; the implicit-function
theorem writes $\{\varphi=t\}$ as a graph with uniformly bounded area.
For $2\varepsilon<\delta$, the coarea formula therefore gives, uniformly
for $\xi\in U_0$,
\begin{align}
 &\mathcal H^{n-1}
 \left(
 \left\{y\in\partial B_{r_K}(\Psi(\xi)):
 \operatorname{dist}(y,\partial K)\leq2\varepsilon\right\}
 \right) \notag\\
 &\quad\leq
 \int_{-2\varepsilon}^{2\varepsilon}
 \int_{\partial B_{r_K}(\Psi(\xi))\cap\{\varphi=t\}}
 \frac{1}{|\nabla_{\partial B_{r_K}(\Psi(\xi))}\varphi|}
 \,d\mathcal H^{n-2}\,dt
 \leq C_{U_0}\varepsilon.
 \label{eq:parameter-intersection-trace-estimate}
\end{align}
Since $\eta_\varepsilon$ is supported in $B_\varepsilon$, on the part
of $\partial B_{r_K}(\Psi(\xi))$ at distance greater than
$2\varepsilon$ from $\partial K$ one has
$\phi_\varepsilon=\kai_K$: both are one on the portion in $K$ and
zero on the portion outside $K$.  Thus the two constant regions cancel
in the preceding derivative formula, and hence
\[
 \begin{aligned}
 \bigl(DQ_\varepsilon(\xi)-G(\xi)\bigr)[\zeta]
 &={}
 \int_{\substack{y\in\partial B_{r_K}(\Psi(\xi))\\
 \operatorname{dist}(y,\partial K)\leq2\varepsilon}}
 \bigl(\phi_\varepsilon(y)-\kai_K(y)\bigr)
 D\Psi(\xi)[\zeta]\cdot
 \frac{y-\Psi(\xi)}{r_K}\,d\mathcal H^{n-1}(y).
 \end{aligned}
\]
For $|\zeta|=1$, the absolute value of the first factor in the
integrand is at most one, the vector
$(y-\Psi(\xi))/r_K$ has length one on the sphere, and
$D\Psi$ is bounded on $\overline{U_0}$.  Hence
\[
 \left|
 \bigl(DQ_\varepsilon(\xi)-G(\xi)\bigr)[\zeta]
 \right|
 \leq
 \|D\Psi\|_{L^\infty(U_0)}
 \mathcal H^{n-1}
 \left(
 \left\{y\in\partial B_{r_K}(\Psi(\xi)):
 \operatorname{dist}(y,\partial K)\leq2\varepsilon\right\}
 \right).
\]
Taking the supremum over $\xi\in U_0$ and using
\eqref{eq:parameter-intersection-trace-estimate}, with $C_{U_0}$
enlarged if necessary, we obtain
\[
 \sup_{\xi\in U_0}
 \left\|DQ_\varepsilon(\xi)-G(\xi)\right\|
 \leq C_{U_0}\varepsilon
 \longrightarrow0.
\]
The value assigned to $\kai_K$ on
$\partial K\cap\partial B_{r_K}(\Psi(\xi))$ is immaterial, since this
intersection has dimension $n-2$.  Thus
\begin{equation}
 DQ_\varepsilon\longrightarrow G
 \qquad\text{uniformly on }U_0.
 \label{eq:parameter-intersection-derivative-convergence}
\end{equation}
Since each $DQ_\varepsilon$ is continuous, so is $G$.  Combining
\eqref{eq:parameter-intersection-function-convergence} and
\eqref{eq:parameter-intersection-derivative-convergence}, the uniform
convergence theorem for $C^1$ functions gives $Q\in C^1(U_0)$ and
$DQ=G$; in particular,
\begin{equation}
 DQ(\xi)[\zeta]
 =\int_{K\cap\partial B_{r_K}(\Psi(\xi))}
 D\Psi(\xi)[\zeta]\cdot
 \frac{y-\Psi(\xi)}{r_K}\,d\mathcal H^{n-1}(y).
 \label{eq:parameter-intersection-first-derivative}
\end{equation}

\medskip
\noindent
\textit{Step 2: straightening the simultaneous-boundary region.}
Step~1 yields only $C^1$ regularity, and its argument cannot be
iterated: although the transition strip has measure $O(\varepsilon)$,
the higher derivatives of $\phi_\varepsilon$ need not be uniformly
bounded there.  For a fixed $\xi$, write
\[
 D_\xi:=K\cap B_{r_K}(\Psi(\xi)).
\]
Then
\[
 Q(\xi)=\int_{D_\xi}dy
 =\int_{\mathbb R^n}
 \kai_K(y)\,\kai_{\{|y-\Psi(\xi)|<r_K\}}\,dy.
\]
The first expression displays the moving geometric set, whereas the
second is an integral over the fixed ambient $y$-space.  We localize
the latter in the joint variables $(\xi,y)$; in particular, we do not
partition the moving set $D_\xi$ itself.  Fix
$(\xi_0,y_0)\in\mathcal I$, choose a
$C^{m,\alpha}$ defining function $\varphi$ near $y_0$ with
\[
 K=\{\varphi<0\},
 \qquad
 \nabla\varphi=\nu_K\quad\text{on }\partial K,
\]
and write
\[
 F(\xi,y)=|y-\Psi(\xi)|^2-r_K^2.
\]
Near $(\xi_0,y_0)$, the integrand in $Q$ is
\[
 \kai_{\{\varphi(y)<0\}}\kai_{\{F(\xi,y)<0\}}.
\]
On the common zero set of $\varphi$ and $F$,
\[
 \left|\nabla_y\varphi\wedge\nabla_yF\right|
 =
 2r_K\left|\Pi_{T_y\partial K}
 \frac{y-\Psi(\xi)}{r_K}\right|
 \geq2r_K\Theta_{r_K}(K)>0.
 \label{eq:parameter-intersection-jacobian-mollified-proof}
\]
Thus $\nabla_y\varphi(y)$ and $\nabla_yF(\xi,y)$ are linearly
independent.  Equivalently, $\partial K$ and
$\partial B_{r_K}(\Psi(\xi))$ meet transversely at their common
points.  Choose $n-2$ tangential coordinates $z=z(y)$ so that
$D_y(\varphi,F,z)$ is nonsingular at $(\xi_0,y_0)$.  The
parameter-dependent inverse-function theorem then yields a
$C^{m,\alpha}$ change of variables
\[
 (\xi,y)\longmapsto
 (\xi,s,t,z)
 =
 \bigl(\xi,\varphi(y),F(\xi,y),z(y)\bigr)
\]
with inverse $y=Y(\xi,s,t,z)$.  In particular, the two membership
conditions become the fixed inequalities
\[
s<0,\qquad t<0.
\]

By compactness of $\mathcal I$, finitely many such coordinate
neighborhoods cover it.  On a neighborhood of the compact active set
\[
 \mathcal A
 :=
 \left\{(\xi,y)\in\overline{U_0}\times\mathbb R^n:
 y\in K\cap\overline{B_{r_K}(\Psi(\xi))}\right\},
\]
choose a smooth partition of unity
$\vartheta_0,\vartheta_1,\ldots,\vartheta_N$, with
$\vartheta_j$ for $j\geq1$ supported in one of these coordinate
neighborhoods and $\vartheta_0$ supported away from $\mathcal I$.
For every fixed $\xi\in U_0$, this gives the exact weighted
decomposition
\[
 Q(\xi)
 =
 \sum_{j=0}^{N}
 \int_{\mathbb R^n}
 \vartheta_j(\xi,y)\,
 \kai_K(y)\,\kai_{\{F(\xi,y)<0\}}\,dy.
\]
The supports of the $\vartheta_j$ may overlap, but their total weight
is one at every pair $(\xi,y)$ for which the displayed integrand can be
nonzero.  Thus this is a weighted decomposition of the integral, not a
set-theoretic decomposition of $D_\xi$ into disjoint pieces.  The
terms with $j\geq1$ are localized near the simultaneous-boundary set
$\mathcal I$, where both conditions vary and the preceding coordinate
change is used; the $j=0$ term is supported away from $\mathcal I$ and
is treated in Step~3.  For $1\leq j\leq N$, let $Q_j$ denote the
corresponding integral in the preceding decomposition, and let $Y_j$
be the inverse of the coordinate map in the neighborhood containing
$\operatorname{supp}\vartheta_j$.  For fixed $\xi$,
$\vartheta_j(\xi,\cdot)$ vanishes outside this neighborhood, where the
preceding map $y\mapsto(s,t,z)$ is a diffeomorphism.  Thus the ordinary
change-of-variables formula applies to $Q_j$: the two indicators
become $\kai_{\{s<0\}}\kai_{\{t<0\}}$, and the Jacobian is that of
$Y_j$.  Hence
\[
 Q_j(\xi)
 =
 \int_{D_j}
 a_j(\xi,s,t,z)\,ds\,dt\,dz,
\]
where $D_j$ is a fixed coordinate box intersected with
$\{s<0,t<0\}$; the cutoff $\vartheta_j$ makes the integrand vanish
outside this box.  Moreover,
\[
 a_j(\xi,s,t,z)
 =
 \vartheta_j\bigl(\xi,Y_j(\xi,s,t,z)\bigr)
 \left|\det D_{(s,t,z)}Y_j(\xi,s,t,z)\right|
 \in C^{m-1,\alpha}.
\]
The domain in the preceding formula does not depend on $\xi$.  Hence differentiation under the integral
sign gives
\[
 Q_j\in C^{m-1,\alpha}.
\]
The single lost derivative comes from the Jacobian of $Y_j$.  Thus all
terms localized near $\mathcal I$ have the required regularity; it
remains to consider the complementary terms, where at most one
boundary condition is active.

\medskip
\noindent
\textit{Step 3: the complement of the simultaneous-incidence set.}
It remains to treat the contribution localized by $\vartheta_0$.  Since
$\operatorname{supp}\vartheta_0\cap\mathcal I=\varnothing$, for each
$(\xi_0,y_0)\in\operatorname{supp}\vartheta_0$ the quantities
$\varphi(y_0)$ and $F(\xi_0,y_0)$ cannot both vanish.  By compactness,
after a finite refinement with a smooth partition of unity it suffices
to consider
\begin{equation}
 Q_\ell(\xi)
 =
 \int_{\mathbb R^n}
 \mathbf 1_\ell(\xi,y)\,
 \kai_{\{\varphi(y)<0\}}\kai_{\{F(\xi,y)<0\}}\,dy,
 \label{eq:parameter-away-localized}
\end{equation}
where on $\operatorname{supp}\mathbf 1_\ell$ at least one of the two
indicator functions is constant.  There are three cases.

\noindent\emph{Case 1: the support meets only the moving spherical
boundary.}  Suppose that $\kai_{\{\varphi<0\}}$ is constant on
$\operatorname{supp}\mathbf 1_\ell$.  The zero case gives
$Q_\ell\equiv0$; otherwise $\varphi<0$ throughout the support and,
with $y=\Psi(\xi)+z$,
\begin{equation}
 Q_\ell(\xi)
 =
 \int_{B_{r_K}(0)}b_\ell(\xi,z)\,dz,
 \qquad
 b_\ell(\xi,z):=\mathbf 1_\ell(\xi,\Psi(\xi)+z).
 \label{eq:parameter-away-ball}
\end{equation}
Since $b_\ell\in C^{m,\alpha}$ and the domain is fixed,
$Q_\ell\in C^{m,\alpha}(U_0)$.

\noindent\emph{Case 2: the support meets only $\partial K$.}  Suppose
that $\kai_{\{F<0\}}$ is constant on
$\operatorname{supp}\mathbf 1_\ell$.  Again the zero case is trivial;
otherwise $F<0$ throughout the support and
\begin{equation}
 Q_\ell(\xi)
 =
 \int_{\mathbb R^n}
 \mathbf 1_\ell(\xi,y)\kai_{\{\varphi(y)<0\}}\,dy.
 \label{eq:parameter-away-K}
\end{equation}
Choose fixed boundary coordinates $y=Y(s,z)$ with $s=\varphi(y)$.
Then
\begin{equation}
 Q_\ell(\xi)
 =
 \int_{D_\ell\cap\{s<0\}}b_\ell(\xi,s,z)\,ds\,dz,
 \label{eq:parameter-away-K-coordinates}
\end{equation}
where
\[
 b_\ell(\xi,s,z)
 =
 \mathbf 1_\ell(\xi,Y(s,z))
 \left|\det D_{(s,z)}Y(s,z)\right|.
\]
Since $Y\in C^{m,\alpha}$, the density is in $C^{m-1,\alpha}$ and the
domain is independent of $\xi$.

\noindent\emph{Case 3: the support meets neither boundary.}  If both
indicators are constant on $\operatorname{supp}\mathbf 1_\ell$, then
either $Q_\ell\equiv0$, or
\begin{equation}
 Q_\ell(\xi)=\int_D b_\ell(\xi,y)\,dy
 \label{eq:parameter-away-interior}
\end{equation}
for a fixed bounded domain $D$ and some
$b_\ell\in C^{m,\alpha}$.  Hence each contribution away from
$\mathcal I$ belongs to $C^{m-1,\alpha}(U_0)$.

Summing the finite partition gives
\[
 Q\in C^{m-1,\alpha}(U_0).
\]
Since $x_0$ and $U_0$ were arbitrary,
\eqref{eq:q-higher-regularity} follows.  The same finite-cover
argument, together with the uniform atlas, tubular-radius, and
transversality bounds in the statement, gives the asserted uniform
estimate.
\end{proof}

Combining Lemma~\ref{lem:parameter-intersection} with the usual
inductive bootstrap and the regularity theory for uniformly elliptic
equations in gnomonic coordinates yields the following higher-regularity
consequence.

\begin{theorem}[Higher regularity]
\label{thm:higher-regularity}
Let $k\geq0$ be an integer and $0<\alpha<1$. Suppose that
\[
 \mathcal M_{\rho,K}=f\,d\sigma,
 \qquad
 f\in C^{k,\alpha}(\mathbb S^{n-1}),
 \qquad
 f>0.
\]
If
\[
 r_K<\operatorname{diam}K,
 \qquad
 \Theta_{r_K}(K)>0,
\]
then
\begin{equation}
 h_K\in C^{k+2,\alpha}(\mathbb S^{n-1}).
 \label{eq:higher-reg-conclusion}
\end{equation}
If $f\in C^\infty$, then $K\in C^\infty_+$.
\end{theorem}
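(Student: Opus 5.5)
We argue by induction on $k$, starting from \Cref{thm:C2alpha-regularity}. Since $f\in C^{k,\alpha}\subset C^\alpha$ and $f>0$, that theorem gives $h_K\in C^{2,\alpha}(\mathbb S^{n-1})$ with $\nabla_{\mathbb S}^2h_K+h_KI>0$. Hence $K\in C^{2,\alpha}_+$, the reverse Gauss map $X_K=\nabla_{\mathbb S}h_K+h_Ku$ is a $C^{1,\alpha}$ diffeomorphism of $\mathbb S^{n-1}$ onto $\partial K$, and $\partial K$ itself is a $C^{2,\alpha}$ hypersurface. This settles $k=0$.

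For the inductive step, suppose $h_K\in C^{j+2,\alpha}(\mathbb S^{n-1})$ with positive curvature matrix for some $0\le j<k$. First, the local boundary graphs are Legendre transforms of the gnomonic potentials $H_e$ (\Cref{lem:gnomonic-subdifferential}: $H_e^*=-g_e$). Positive definiteness of the curvature matrix makes $D^2H_e$ invertible, so $g_e\in C^{j+2,\alpha}$, i.e.\ $\partial K\in C^{j+2,\alpha}$. Second, since $j+2\ge2$, $r_K<\operatorname{diam}K$ and $\Theta_{r_K}(K)>0$, \Cref{lem:parameter-intersection} with $m=j+2$ gives $q_{r_K,K}|_{\partial K}\in C^{j+1,\alpha}(\partial K)$. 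Composing with $X_K\in C^{j+1,\alpha}$ (a diffeomorphism onto $\partial K$) yields $q_{r_K,K}\circ X_K\in C^{j+1,\alpha}(\mathbb S^{n-1})$. Hence $W_K=2q_{r_K,K}\circ X_K+\beta_{\rho,K}\in C^{j+1,\alpha}$, and $W_K\ge2\tau_K^n|K|>0$ by \eqref{eq:WK-positive-lower-bound}.

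The right-hand side $f/W_K$ of \eqref{eq:nonlocal-spherical-MA} therefore lies in $C^{\min(k,j+1),\alpha}$, which is $C^{j+1,\alpha}$ since $j+1\le k$. In each gnomonic chart the equation reads $\det D^2H_e=F_e$ with $F_e=(f/W_K)(v_e(p))(1+|p|^2)^{-(n+1)/2}\in C^{j+1,\alpha}_{\rm loc}$. The operator is uniformly elliptic along the $C^{2,\alpha}$ strictly convex solution $H_e$. Differentiating the equation and applying interior Schauder estimates to the linearized equations $\operatorname{tr}((D^2H_e)^{-1}D^2\partial H_e)=\partial\log F_e$, whose coefficients are $C^{j,\alpha}$, gives $H_e\in C^{j+3,\alpha}_{\rm loc}$. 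A finite cover by compact subcharts gives $h_K\in C^{j+3,\alpha}(\mathbb S^{n-1})$, completing the induction and yielding \eqref{eq:higher-reg-conclusion}. If $f\in C^\infty$, the statement holds for every $k$, so $h_K\in C^\infty$; together with the positivity of the curvature matrix this gives $K\in C^\infty_+$.

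The main obstacle is the derivative count at the interface between \Cref{lem:parameter-intersection} and the geometry. The lemma loses one derivative, and $X_K$ has one derivative less than $h_K$. Each step must therefore be checked to ensure that $q\circ X_K$ lands in $C^{j+1,\alpha}$ rather than $C^{j,\alpha}$, which is exactly what the Schauder gain of two derivatives can absorb. The point to verify is that $\partial K$ has the same regularity $C^{j+2,\alpha}$ as $h_K$, not merely $C^{j+1,\alpha}$. This rests on the nondegeneracy $\nabla_{\mathbb S}^2h_K+h_KI>0$, which makes the Gauss map and its inverse diffeomorphisms of the appropriate class.

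A second point to check is that the hypothesis $\Theta_{r_K}(K)>0$ refers to the solution itself and is preserved trivially along the induction. The lemma's requirement $K\in C^2_+$ holds from the first step onward.
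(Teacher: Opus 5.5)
Your proposal is correct and is essentially the paper's argument: the paper only states that the theorem follows from Lemma~\ref{lem:parameter-intersection} by the usual inductive bootstrap with Schauder theory in gnomonic charts. You carry this out with the right derivative count: $\partial K\in C^{j+2,\alpha}$ via $H_e^*=-g_e$ and $\nabla_{\mathbb S}^2h_K+h_KI>0$, then $m=j+2$ in the lemma, $q_{r_K,K}\circ X_K\in C^{j+1,\alpha}$, and the Schauder gain of two derivatives. The only step to phrase more carefully is $j=0$, where $H_e$ is merely $C^{2,\alpha}$. There the differentiation of the equation should be justified by difference quotients, or by citing the standard nonlinear Schauder regularity of $C^{2,\alpha}$ solutions of uniformly elliptic equations with $C^{1,\alpha}$ data.
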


\begin{remark}[The saturated regime]
\label{rem:saturated-regime}
The preceding analysis is unnecessary when
$r_K\geq\operatorname{diam}K$. The ball
$B_{r_K}(x)$ covers $K$ up to a null set for every
$x\in\partial K$, and the covariogram vanishes on
$\partial B_{r_K}$.  Consequently,
\[
 q_{r_K,K}(x)=V,\qquad
 \beta_{\rho,K}=0,\qquad
 w_K(x)=2V,
 \qquad
 \mathcal M_{\rho,K}=2VS_K.
\]
Thus, if $\mathcal M_{\rho,K}=f\,d\sigma$, the equation is simply
\[
 S_K=\frac{f}{2V}\,d\sigma.
\]
It is therefore the classical Minkowski problem; in particular,
positive $C^{k,\alpha}$ data give $K\in C^{k+2,\alpha}_+$, smooth data
give $K\in C^\infty_+$, and no transversality assumption is required.
Moreover, since
$\operatorname{diam}K\geq2R_K$ and $r_K=\rho R_K$, this regime can
occur only for $\rho\geq2$.  If $\rho=2$, equality holds in the
isodiametric inequality and $K$ is a ball; for $\rho>2$, nonspherical
convex bodies may also occur.
\end{remark}

\section*{Acknowledgement}
The author wishes to express her sincere gratitude to her supervisor, Prof. Yong Huang, for the many fruitful discussions, guidance, and assistance throughout the completion of this paper. She would also like to extend her sincere thanks to Prof. Gaoyong Zhang for his valuable comments and suggestions on this paper.

\bibliographystyle{amsplain}
\providecommand{\bysame}{\leavevmode\hbox to3em{\hrulefill}\thinspace}
\providecommand{\MR}{\relax\ifhmode\unskip\space\fi MR }
\providecommand{\MRhref}[2]{%
  \href{http://www.ams.org/mathscinet-getitem?mr=#1}{#2}
}
\providecommand{\href}[2]{#2}

\end{document}